\theoremstyle{definition}
\newtheorem{thm}{Theorem}[section]
\newtheorem{dfn}[thm]{Definition}
\newtheorem{lem}[thm]{Lemma}
\newtheorem{prp}[thm]{Proposition}
\newtheorem{cor}[thm]{Corollary}
\newtheorem{rmk}[thm]{Remark}
\newtheorem*{thm*}{Theorem}
\newtheorem*{cor*}{Corollary}
\newtheorem*{prp*}{Proposition}
\newtheorem*{rmk*}{Remark}
\newtheorem*{ntt}{Notation}
\newcommand{\inn}{\in\mathbb{N}}
\newcommand{\e}{\varepsilon}
\newcommand{\al}{\alpha}
\newcommand{\de}{\delta}
\newcommand{\la}{\lambda}
\newcommand{\La}{\Lambda}
\newcommand{\be}{\beta}
\newcommand{\adx}{\al\big(\{x_k\}_k\big)}
\newcommand{\bdx}{\be\big(\{x_k\}_k\big)}
\newcommand{\ady}{\al\big(\{y_k\}_k\big)}
\newcommand{\bdy}{\be\big(\{y_k\}_k\big)}
\newcommand{\adz}{\al\big(\{z_k\}_k\big)}
\newcommand{\bdz}{\be\big(\{z_k\}_k\big)}
\newcommand{\adw}{\al\big(\{w_k\}_k\big)}
\newcommand{\bdw}{\be\big(\{w_k\}_k\big)}
\newcommand{\Sn}{\mathcal{S}_n}
\newcommand{\X}{\mathfrak{X}_{_{^\text{ISP}}}}
\DeclareMathOperator{\supp}{supp}
\DeclareMathOperator{\ran}{ran}
\DeclareMathOperator{\scc}{succ}
\DeclareMathOperator{\sgn}{sgn}
\DeclareMathOperator{\dist}{dist}
\long\def\symbolfootnote[#1]#2{\begingroup%
\def\thefootnote{\fnsymbol{footnote}}\footnote[#1]{#2}\endgroup}
\begin{document}

\title [Hereditary Invariant Subspace Property]{A Reflexive HI space with the hereditary Invariant Subspace Property}
\dedicatory{Dedicated to the memory of Edward Odell}

\author[S.A. Argyros, P. Motakis]{Spiros A. Argyros and Pavlos
Motakis}
\address{National Technical University of Athens, Faculty of Applied Sciences,
Department of Mathematics, Zografou Campus, 157 80, Athens,
Greece} \email{sargyros@math.ntua.gr, pmotakis@central.ntua.gr}

\maketitle

\symbolfootnote[0]{\textit{2010 Mathematics Subject
Classification:} Primary 46B03, 46B06, 46B25, 46B45, 47A15}

\symbolfootnote[0]{\textit{Key words:} Spreading models, Strictly
singular operators, Invariant subspaces, Reflexive spaces,
Hereditarily indecomposable spaces}

\begin{abstract}
A separable Banach space $X$ satisfies the Invariant Subspace
Property (ISP) if every bounded linear operator
$T\in\mathcal{L}(X)$ admits a non trivial closed invariant
subspace. In this paper we present the first example of a
reflexive Banach space $\X$ satisfying the ISP. Moreover, this is
the first example of a Banach space satisfying the hereditary ISP,
namely every infinite dimensional subspace of it satisfies the
ISP. The space $\X$ is hereditarily indecomposable (HI) and every
operator $T\in\mathcal{L}(\X)$ is of the form $\la I + S$ with $S$
a strictly singular operator. The critical property of the
strictly singular operators of $\X$ is that the composition of any
three of them is a compact one. The construction of $\X$ is based
on saturation methods and it uses as an unconditional frame
Tsirelson space. The new ingredient in the definition of the space
is the saturation under constraints, a method initialized in a
fundamental work of Edward Odell and Thomas Schlumprecht.
\end{abstract}

\section*{Introduction}

The invariant subspace problem asks whether every bounded linear
operator on an infinite dimensional separable Banach space admits
a non-trivial closed invariant subspace. A classical result of M.
Aronszajn and K.T. Smith \cite{AS} asserts that the problem has a
positive answer for compact operators. This result was extended by
V. Lomonosov \cite{L} for operators on complex Banach spaces that
commute with a non-trivial compact operator. Recently G. Sirotkin
\cite{Sir} has presented a version of Lomonosov's theorem for real
spaces. It is also known that the problem, in its full generality,
has a negative answer. Indeed P. Enflo \cite{E} and subsequently
C. J. Read \cite{R1},\cite{R2} have provided several examples of
operators on non-reflexive Banach spaces that do not admit a
non-trivial invariant subspace. In particular, in a profound study concerning spaces admitting operators without non trivial invariant subspaces, C. J. Read has proven that every separable Banach space that contains either $c_0$ or a complemented subspace isomorphic to $\ell_1$ or $J_\infty$, admits an operator without non trivial closed invariant subspaces \cite{R4}. A comprehensive study of Read's methods of constructing operators with no non trivial invariant subspaces can be found in \cite{GR1}, \cite{GR2}. Also recently a non-reflexive
hereditarily indecomposable (HI) Banach space $\mathfrak{X}_K$
with the ``scalar plus compact'' property has been constructed
\cite{AH}. This is a $\mathcal{L}_\infty$ space with separable
dual, resulting from a combination of HI techniques with the
fundamental J. Bourgain and F. Delbaen construction \cite{BD}. As
consequence, the space $\mathfrak{X}_K$ satisfies the Invariant
Subspace Property (ISP). Moreover, recently a $\mathcal{L}_\infty$ space, containing $\ell_1$ isomorphically, with the scalar plus compact property, has been constructed \cite{AHR}. Let us point out, that the latter shows that Read's result concerning separable spaces containing $\ell_1$ as a complemented subspace, is not extended to separable spaces containing $\ell_1$. All the above results provide no
information in either direction within the class of reflexive
Banach spaces. The importance of a result in this class, concerning the Invariant Subspace Problem,
 is reflected in the concluding phrase of C. J. Read in \cite{R4} where the following is stated. ``It is clear that we cannot
go much further until and unless we solve the invariant subspace problem on a
reflexive Banach space.''

The aim of the present work is to construct the first example of a
reflexive Banach space $\X$ with the Invariant Subspace Property,
which is also the example of a Banach space with the hereditary
Invariant Subspace Property. This property is not proved for the
aforementioned space $\mathfrak{X}_K$. It is notable that no
subspace of $\X$ has the ``scalar plus compact'' property. More
precisely, the strictly singular operators\footnote[1]{A bounded
linear operator is called strictly singular, if its restriction on
any infinite dimensional subspace is not an isomorphism.} on every
subspace $Y$ of $\X$ form a non separable ideal (in particular,
the strictly singular non-compact are non-separable).

The space $\X$ is a hereditarily indecomposable space and every
operator $T\in\mathcal{L}(\X)$ is of the form $T = \la I + S$ with
$S$ strictly singular. We recall that there are strictly singular
operators in Banach spaces without non-trivial invariant subspaces
\cite{R3}. On the other hand, there are spaces where the ideal of
strictly singular operators does not coincide with the
corresponding one of compact operators and every strictly singular
operator admits a non-trivial invariant subspace. The most
classical spaces with this property are $L^p[0,1], 1\leqslant
p <\infty$ and  $C[0,1]$. This is a combination of Lomonosov-Sirotkin
theorem and the classical result, due to V. Milman \cite{M}, that the composition $TS$ is a compact
operator, for any $T,S$ strictly singular operators, on any of the
above spaces. In \cite{ABM}, Tsirelson like spaces satisfying
similar properties are presented. The possibility of constructing
a reflexive space with ISP without the ``scalar plus compact''
property emerged from an earlier version of \cite{ABM}.

The following describes the main properties of the space $\X$.

\begin{thm*} There exists a reflexive space $\X$ with a Schauder
basis $\{e_n\}_{n\inn}$ satisfying the following properties.
\begin{enumerate}

\item[(i)] The space $\X$ is hereditarily indecomposable.

\item[(ii)] Every seminormalized weakly null sequence
$\{x_n\}_{n\inn}$ has a subsequence generating either $\ell_1$ or
$c_0$ as a spreading model. Moreover every infinite dimensional
subspace $Y$ of $\X$ admits both $\ell_1$ and $c_0$ as spreading
models.

\item[(iii)] For every $Y$ infinite dimensional closed subspace of
$\X$ and every $T\in\mathcal{L}(Y,\X),\; T = \la I_{_{Y,\X}} + S$
with $S$ strictly singular.

\item[(iv)] For every $Y$ infinite dimensional subspace of $\X$
the ideal $\mathcal{S}(Y)$ of the strictly singular operators is
non separable.

\item[(v)] For every $Y$ subspace of $\X$ and every $Q,S,T$ in
$\mathcal{S}(Y)$ the operator $QST$ is compact. Hence for every
$T\in\mathcal{S}(Y)$ either $T^3 = 0$ or $T$ commutes with a non
zero compact operator.

\item[(vi)] For every $Y$ infinite dimensional closed subspace of
$X$ and every $T\in\mathcal{L}(Y)$, $T$ admits a non-trivial
closed invariant subspace. In particular every $T\neq\la I_Y$, for
$\la\in\mathbb{R}$ admits a non-trivial hyperinvariant subspace.

\end{enumerate}
\end{thm*}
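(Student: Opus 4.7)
The plan is to realize $\X$ as a mixed-Tsirelson-type space whose norming set $W$ is built from the Tsirelson admissibility together with a sequence of $(\mathcal{M}_{n_j},\theta_j)$-averaging operations that are only applied under constraints, in the sense pioneered by Odell and Schlumprecht: a functional of weight $n_j$ is admitted into $W$ only when its constituent blocks satisfy a very fast-growth/separation condition controlled by the Tsirelson frame. The coordinate basis $\{e_n\}_{n\inn}$ is then a shrinking Schauder basis by the Tsirelson backbone, and reflexivity is equivalent to the non-embedding of $\ell_1$ into $\X$, which will fall out of the spreading model analysis in (ii). Around this norming set I would develop the standard hereditarily indecomposable toolkit adapted to constraints: rapidly increasing sequences (RIS), $\ell_1^k$-averages, exact pairs and dependent sequences.

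For (i), I would follow the Gowers--Maurey template: given two block subspaces $Y,Z$ of $\X$, extract an RIS in each, form a dependent sequence $\{(x_k,y_k)\}_k$ of exact pairs, and exploit the tree-analysis of any functional $f\in W$ to show $\|\sum(x_k-y_k)\|$ is small while $\|\sum(x_k+y_k)\|$ is large; the constraint mechanism has to be shown compatible with forming these dependent sequences, which requires tracking the admissibility conditions along the construction. For (ii), a seminormalized weakly null block sequence, after extraction, is essentially an RIS; either an $(\mathcal{M}_{n_j},\theta_j)$-average norms a long convex combination (giving $\ell_1$ as spreading model) or the constraints prevent coordination between the functionals and one obtains a $c_0$ spreading model, which is precisely the dichotomy forced by saturation under constraints. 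The existence of both spreading models in every subspace follows by explicit construction of averages and of constrained blocks.

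For (iii) I would argue by contradiction in the standard HI manner: if $T\in\mathcal{L}(Y,\X)$ is not of the form $\la I_{_{Y,\X}}+S$ with $S$ strictly singular, one produces a normalized block sequence $\{x_k\}_k$ on which $\dist(Tx_k,\mathbb{R}x_k)$ stays bounded away from zero, then builds an exact pair incompatible with $T$, violating the HI property proved in (i). Property (iv) is obtained by producing, from an uncountable almost-disjoint family in $[\mathbb{N}]^\infty$, a continuum of strictly singular operators defined by projecting RIS-averages onto distinct coordinate directions and verifying pairwise separation in operator norm.

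The main obstacle is the cube-compactness (v); everything else has a reasonably clear template, but (v) is the new ingredient that makes the saturation under constraints necessary. My plan is to leverage the dichotomy of (ii): given strictly singular $Q,S,T$ and a normalized block sequence $\{x_k\}_k$, pass to a subsequence with a spreading model. If the spreading model is $c_0$, one application of a strictly singular operator already drives norms to zero on convex averages; if it is $\ell_1$, one must show that $T$ sends $\{x_k\}_k$ (up to subsequence and small perturbation) to a sequence whose spreading model is $c_0$, i.e.\ strictly singular operators cannot transport the $\ell_1$-structure of $\X$ to itself. This one-step downgrading of the spreading model class is where the constraint structure is essential, and iterating it at most three times forces $\|QSTx_k\|\to 0$, yielding compactness of $QST$. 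Proving this downgrading cleanly, with uniform constants independent of the block sequence, is the technical core I expect to occupy the bulk of the work.

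Finally, (vi) is an immediate consequence of (iii) and (v) together with the real Lomonosov theorem of Sirotkin \cite{Sir}: writing $T=\la I+S$ with $S\in\mathcal{S}(Y)$, if $S=0$ every subspace is invariant; if $S^3=0$ then $\ker S$ (or $\ker S^2$) is a non-trivial closed invariant subspace; otherwise $S^3$ is a non-zero compact operator commuting with $S$, so Sirotkin's theorem supplies a non-trivial closed hyperinvariant subspace for $S$, which is automatically invariant for $T$ and, when $T\neq\la I$, hyperinvariant for $T$ as well.
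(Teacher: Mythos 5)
Your outline for (i)--(iv) and (vi) follows the same general template as the paper (dependent sequences of exact pairs for the HI property, the $c_0$/$\ell_1$ spreading model dichotomy, a Ferenczi-type argument for (iii), a family of ``diagonal'' operators built from a $c_0$-spreading-model sequence in $Y$ and one in $Y^*$ for (iv), and Sirotkin's theorem for (vi)), though note that the paper insists the unconditional frame is Tsirelson space itself rather than a mixed Tsirelson space: the operations $(\frac{1}{2^n},\Sn)$ are all already absorbed by $W_T$, and it is precisely the constraints (very fast growing families of averages) that make an HI construction over Tsirelson possible at all.

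The genuine gap is in (v), which you correctly identify as the core. Your scheme classifies weakly null sequences into just two nontrivial classes ($c_0$ spreading model, $\ell_1$ spreading model) and asserts a one-step downgrade: a strictly singular operator sends an $\ell_1$-spreading-model sequence to a $c_0$-spreading-model (or norm null) sequence. If that lemma held, then for any two strictly singular $S,T$ the composition $ST$ would already be compact ($T$ lands in the $c_0$-or-null class, and $S$ kills that class), so ``iterating three times'' is never needed --- and indeed the paper explicitly states it cannot decide whether two operators suffice. The claimed downgrade is therefore stronger than anything the paper's machinery yields, and there is no reason to believe it: a strictly singular operator may well carry an $\ell_1$-spreading-model sequence to another $\ell_1$-spreading-model sequence. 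What the paper actually proves is a drop of one level in a \emph{four}-rank hierarchy: rank $0$ (norm null), rank $1$ ($\al$- and $\be$-index both zero, giving $c_0$ spreading models), and ranks $2$ and $3$, which \emph{both} generate $\ell_1$ spreading models but are distinguished by whether the $\be$-index of associated $(C,\theta,n_j)$ vectors (renormalized special convex combinations of the sequence) vanishes. A strictly singular operator maps rank $3$ into rank $\leqslant 2$, rank $2$ into rank $\leqslant 1$, and rank $1$ into rank $0$; hence three compositions. This finer splitting is inseparable from the $\be$-averages in the norming set --- the genuinely new ingredient of the construction, which your proposed norming set omits entirely. Without $\be$-averages one cannot run the Ramsey argument controlling type II functionals on sequences with $\be$-index zero, the $c_0$ spreading models are no longer abundant, and the rank-$2$/rank-$3$ distinction on which the three-fold composition rests cannot even be formulated.
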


It is not clear to us if the number of operators in property (v)
can be reduced. For defining the space $\X$ we use classical
ingredients like the coding function $\sigma$, the interaction
between conditional and unconditional structure, but also some new
ones which we are about to describe.

In all previous HI constructions, one had to use a mixed Tsirelson
space as the unconditional frame on which the HI norm is built.
Mixed Tsirelson spaces appeared with Th. Schlumprecht space
\cite{Sch}, twenty years after Tsirelson construction \cite{T}.
They became an inevitable ingredient for any HI construction,
starting with the W.T. Gowers and B. Maurey celebrated example
\cite{GM}, and followed by myriads of others \cite{AD2},\cite{AT}
etc. The most significant difference in the construction of $\X$
from the classical ones, is that it uses as an unconditional frame
the Tsirelson space itself.

As it is clear to the experts, HI constructions based on Tsirelson
space, are not possible if we deal with a complete saturation of
the norm. Thus the second ingredient involves saturation under
constraints. This method was introduced by E. Odell and Th.
Schlumprecht \cite{OS1},\cite{OS2} for defining heterogeneous
local structure in HI spaces, a method also used in \cite{ABM}. By
saturation under constraints we mean that the operations
$(\frac{1}{2^n},\Sn)$ (see Remark \ref{remxx1.5}) are applied on
very fast growing families of averages, which are either
$\al$-averages or $\be$-averages. The $\al$-averages have been
also used in \cite{OS1},\cite{OS2}, while $\be$-averages are
introduced to control the behaviour of special functionals. It is
notable that although the $\al,\be$-averages do not contribute to
the norm of the vectors in $\X$, they are able to neutralize the
action of the operations $(\frac{1}{2^n},\Sn)$ on certain
sequences and thus $c_0$ spreading models become abundant. This
significant property yields the structure of $\X$ described in the
above theorem.

Let us briefly describe some further structural properties of the
space $\X$.

The first and most crucial one is that for a $(n,\e)$ special
convex combination (see Definition \ref{defscc}) $\sum_{i\in
F}c_ix_i$, with $\{x_i\}_{i\in F}$ a finite normalized block
sequence, we have that
\begin{equation*}
\|\sum_{i\in F}c_ix_i\|\leqslant \frac{6}{2^n} + 12\e
\end{equation*}
This evaluation is due to the fact that the space is built on
Tsirelson space and differs from the classical asymptotic $\ell_1$
HI spaces (i.e. \cite{AD2},\cite{AT}) where seminormalized
$(n,\e)$ special convex combinations exist in every block
subspace. A consequence of the above, is that the frequency of the
appearance of RIS sequences is significantly increased, which
among others yields the following. Every strictly singular
operator maps sequences generating $c_0$ spreading models to norm
null ones. Furthermore, we classify weakly null sequences into
sequences of rank 0, namely norm null ones, sequences of rank 1,
namely sequences generating $c_0$ as a spreading model and
sequences of rank 2 or 3, namely sequences generating $\ell_1$ as
a spreading model. The main result concerning these ranks is the
following. If $Y$ is an infinite dimensional closed subspace of
$\X$ and $T$ is a strictly singular operator on $Y$, then it maps
sequences of non zero rank, to sequences of strictly smaller rank.
Combining the above properties we conclude property (v) of the
above theorem.\vskip5pt

We thank G. Costakis for bringing to our attention G. Sirotkin's
paper \cite{Sir}.

\section{The norming set of the space $\X$}

In this section we define the norming set $W$ of the space $\X$.
This set is defined with the use of the sequence $\{\Sn\}_n$ which
we remind below and also families of $\Sn$-admissible functionals.

As we have mentioned in the introduction, the set $W$ will be a
subset of the norming set $W_T$ of the Tsirelson space.

\subsection*{The Schreier families} The Schreier families is
an increasing sequence of families of finite subsets of the
naturals, first appeared in \cite{AA}, inductively defined in the
following manner.

Set $\mathcal{S}_0 = \big\{\{n\}: n\inn\big\}$ and $\mathcal{S}_1
= \{F\subset\mathbb{N}: \#F\leqslant\min F\}$.

Suppose that $\Sn$ has been defined and set $\mathcal{S}_{n+1} =
\{F\subset\mathbb{N}: F = \cup_{j = 1}^k F_j$, where $F_1 <\cdots<
F_k\in\Sn$ and $k\leqslant\min F_1\}$

If for $n,m\inn$ we set $\Sn*\mathcal{S}_m = \{F\subset\mathbb{N}:
F = \cup_{j = 1}^k F_j$, where $F_1 <\cdots< F_k\in\mathcal{S}_m$
and $\{\min F_j: j=1,\ldots,k\}\in\Sn\}$, then it is well known
that $\Sn*\mathcal{S}_m = \mathcal{S}_{n+m}$.

\begin{ntt} A sequence of vectors $x_1
<\cdots<x_k$ in $c_{00}$ is said to be $\Sn$-admissible if
$\{\min\supp x_i: i=1,\ldots,k\}\in\Sn$.

Let $G\subset c_{00}$. A vector $f\in G$ is said to be an average
of size $s(f) = n$, if there exist $f_1,\ldots,f_d\in G,
d\leqslant n$, such that $f = \frac{1}{n}(f_1+\cdots+f_d)$.

A sequence $\{f_j\}_j$ of averages in $G$ is said to be very fast
growing, if $f_1<f_2<\ldots$, $s(f_j)>2^{\max\supp f_{j-1}}$ and
$s(f_j) > s(f_{j-1})$ for $j>1$.
\end{ntt}

\subsection*{The coding function} Choose $L = \{\ell_k:
k\inn\}, \ell_1
>2$ an infinite subset of the naturals such that:
\begin{enumerate}

\item[(i)] For any
$k\inn$ we have that $\ell_{k+1} > 2^{2\ell_k}$ and

\item[(ii)] $\sum_{k=1}^\infty
\frac{1}{2^{\ell_k}}<\frac{1}{1000}$.

\end{enumerate}
Decompose $L$ into further infinite subsets $L_1, L_2$. Set
\begin{eqnarray*}
\mathcal{Q} &=&
\big\{\big((f_1,n_1),\ldots,(f_m, n_m)\big): m\inn, \{n_k\}_{k=1}^m\subset\mathbb{N}, f_1 <\ldots <f_m\in c_{00}\\
&&\text{with}\;f_k(i)\in\mathbb{Q},\;\text{for}\;i\inn,
k=1,\ldots,m\}
\end{eqnarray*}
Choose a one to one function $\sigma:\mathcal{Q}\rightarrow L_2$, called the coding function, such that
for any $\big((f_1,n_1),\ldots,(f_m, n_m)\big)\in\mathcal{Q}$, we have that
\begin{equation*}
\sigma\big((f_1, n_1),\ldots,(f_m,n_m)\big) > 2^{n_m}\cdot\max\supp f_m
\end{equation*}

\begin{rmk}
For any $n\inn$ we have that
$\#L\cap\{n,\ldots,2^{2n}\}\leqslant 1$. \label{remark1.1}
\end{rmk}

\subsection*{The norming set} The norming set $W$ is defined to
be the smallest subset of $c_{00}$ satisfying the following
properties:\vskip3pt

\noindent {\bf 1.} The set $\{\substack{+\\[-2pt]-} e_n\}_{n\inn}$
is a subset of $W$, for any $f\in W$ we have that $-f\in W$, for
any $f\in W$ and any $I$ interval of the naturals we have that
$If\in W$ and $W$ is closed under rational convex
combinations. Any $f = \substack{+\\[-2pt]-} e_n$ will be called a functional of type 0.\vskip3pt

\noindent {\bf 2.} The set $W$ contains any functional $f$ which
is of the form $f = \frac{1}{2^n}\sum_{j=1}^d\al_j$, where
$\{\al_j\}_{j=1}^d$ is an $\Sn$-admissible and very fast growing
sequence of $\al$-averages in $W$. If $I$ is an interval of the
naturals, then $g = \substack{+\\[-2pt]-}If$ is called a functional of type I$_\al$, of
weight $w(g) = n$.\vskip3pt

\noindent {\bf 3.} The set $W$ contains any functional $f$ which
is of the form $f = \frac{1}{2^n}\sum_{j=1}^d\be_j$, where
$\{\be_j\}_{j=1}^d$ is an $\Sn$-admissible and very fast growing
sequence of $\be$-averages in $W$. If $I$ is an interval of the
naturals, then $g = \substack{+\\[-2pt]-}If$ is called a functional of type I$_\be$, of
weight $w(g) = n$.\vskip3pt

\noindent {\bf 4.} The set $W$ contains any functional $f$ which
is of the form $f = \frac{1}{2}\sum_{j=1}^df_j$, where
$\{f_j\}_{j=1}^d$ is an $\mathcal{S}_1$-admissible special
sequence of type I$_\al$ functionals. This means that $w(f_1)\in
L_1$ and $w(f_j) =
\sigma\big(\big(f_1,w(f_1)\big),\ldots,\big(f_{j-1},w(f_{j-1})\big)\big)$,
for $j>1$. If $I$ is an interval of the naturals, then $g = \substack{+\\[-2pt]-}If$
is called a functional of type II with weights $\widehat{w}(g) =
\{w(f_j) : \ran f_j\cap I\neq\varnothing\}$.\vskip3pt

We call an $\al$-average any average $\al\in W$ of the form $\al =
\frac{1}{n}\sum_{j=1}^df_j, d\leqslant n$, where
$f_1<\cdots<f_d\in W$.

We call a $\be$-average any average $\be\in W$ of the form $\be
= \frac{1}{n}\sum_{j=1}^df_j, d\leqslant n$, where
$f_1,\ldots,f_d\in W$ are functionals of type II, with disjoint
weights $\widehat{w}(f_j)$.

In general, we call a convex combination any $f\in W$ that is not
of type 0, I$_\al$, I$_\be$ or II.\vskip3pt

For $x\in c_{00}$ define $\|x\| = \sup\{f(x): f\in W\}$ and $\X =
\overline{(c_{00}(\mathbb{N}),\|\cdot\|)}$. Evidently $\X$ has a
bimonotone basis.

One may also describe the norm on $\X$ with an implicit formula.
Indeed, for some $x\in\X$, we have that
\begin{equation*}
\|x\| =
\max\big\{\|x\|_0,\;\|x\|_{II},\;\sup\{\frac{1}{2^n}\sum_{j=1}^d\|E_jx\|_{k_j}^\al\},\;\sup\{\frac{1}{2^n}\sum_{j=1}^d\|E_jx\|_{k_j}^\be\}\big\}
\end{equation*}
where the inner suprema are taken over all $n\inn$, all
$\Sn$-admissible intervals $\{E_j\}_{j=1}^d$ of the naturals and
$k_1<\cdots<k_d$ such that $k_j > 2^{\max E_{j-1}}$ for $j>1$.

By $\|x\|_{II}$ we denote\\
$\|x\|_{II} = \sup\{f(x): f\in W$ is a functional of type II$\}$\\
whereas for $j\inn$, by $\|x\|_j^\al$ we denote\\
$\|x\|_j^\al = \sup\{\al(x): \al\in W$ is an $\al$-average of size
$s(\al) = j\}$

Similarly, by $\|x\|_j^\be$ we denote\\
$\|x\|_j^\be = \sup\{\be(x): \be\in W$ is a $\be$-average of size
$s(\be) = j\}$.

\begin{rmk}
Very fast growing sequences of $\al$-averages have been considered
by E. Odell and Th. Schlumprecht in \cite{OS1}, \cite{OS2} and
were also used in \cite{ABM}. However, $\be$-averages are a new
ingredient, introduced to control the behaviour of type II
functionals on block sequences. The $\be$-averages can also be
used to provide an alternative and simpler approach of the main
result in \cite{OS2}.

As we have mentioned in the introduction, the $\|z\|_j^\al,
\|z\|_j^\be$, which are averages, do not contribute to the norm of
the vector $z$. On the other hand, the $\{\|\cdot\|_j^\al\}_j,
\{\|\cdot\|_j^\be\}_j$ have a significant role for the structure
of the space $\X$.
\end{rmk}

\begin{rmk} The norming set $W$ can be inductively constructed to
be the union of an increasing sequence of subsets
$\{W_m\}_{m=0}^\infty$ of $c_{00}$, where $W_0 = \{\substack{+\\[-2pt]-}
e_n\}_{n\inn}$ and if $W_m$ has been constructed, then set
$W_{m+1}^\al$ to be the closure of $W_m$ under taking
$\al$-averages, $W_{m+1}^{\text{I}_\al}$ to be the closure of
$W_{m+1}^\al$ under taking type I$_\al$ functionals,
$W_{m+1}^{\text{I}_\be}$ to be the closure of
$W_{m+1}^{\text{I}_\al}$ under taking type I$_\be$ functionals,
$W_{m+1}^{\text{II}}$ to be the closure of
$W_{m+1}^{\text{I}_\be}$ under taking type II functionals,
$W_{m+1}^\be$ to be the closure of $W_{m+1}^{\text{II}}$ under
taking $\be$-averages and finally $W_{m+1}$ to be the closure of
$W_{m+1}^\be$ under taking rational convex combinations.
\label{remark1.2}
\end{rmk}

\subsection*{Tsirelson space} Tsirelson's initial definition
\cite{T} of the first Banach space not containing any $\ell_p,
1\leqslant p<\infty$ or $c_0$, concerned the dual of the so called
Tsirelson norm which was introduced by T. Figiel and W. B. Johnson
\cite{FJ} and satisfies the following implicit formula.
\begin{equation*}
\|x\|_T =
\max\big\{\|x\|_0,\;\sup\{\frac{1}{2}\sum_{j=1}^d\|E_jx\|_T\}\big\}
\end{equation*}
where $x\in c_{00}$ and the inner supremum is taken over all
successive subsets of the naturals $d \leqslant E_1 <\cdots <E_d$.
Tsirelson space $T$ is defined to be the completion of
$(c_{00},\|\cdot\|_T)$. In the sequel by Tsirelson norm and
Tsirelson space we will mean the norm and the corresponding space
from \cite{FJ}.

As is well known, a norming set $W_T$ of Tsirelson space is the
smallest subset of $c_{00}$ satisfying the following
properties.\vskip3pt

\noindent {\bf 1.} The set $\{\substack{+\\[-2pt]-} e_n\}_{n\inn}$
is a subset of $W_T$, for any $f\in W_T$ we have that $-f\in W_T$,
for any $f\in W_T$ and any $E$ subset of the naturals we have that
$Ef\in W_T$ and $W_T$ is closed under rational convex
combinations.\vskip3pt

\noindent {\bf 2.} The set $W_T$ contains any functional $f$ which
is of the form $f = \frac{1}{2}\sum_{j=1}^df_j$, where
$\{f_j\}_{j=1}^d$ is a $\mathcal{S}_1$ admissible sequence in
$W_T$.

\begin{rmk} The following are well known facts about Tsirelson
space.\label{remark1.3}
\begin{enumerate}

\item[(i)] The norming set $W_T$ can be inductively constructed to
be the union of an increasing sequence of subsets
$\{W_T^m\}_{m=0}^\infty$ of $c_{00}$, in a similar manner as
above.

\item[(ii)] The set $W_T^\prime$, which is the smallest subset of
$c_{00}$ satisfying the following properties, also is a norming
set for Tsirelson space.\vskip3pt

\noindent {\bf 1.} The set $\{\substack{+\\[-2pt]-} e_n\}_{n\inn}$
is a subset of $W_T^\prime$, for any $f\in W_T^\prime$ we have
that $-f\in W_T^\prime$ and for any $f\in W_T^\prime$ and any $E$
subset of the naturals we have that $Ef\in W_T^\prime$.\vskip3pt

\noindent {\bf 2.} The set $W_T^\prime$ contains any functional
$f$ which is of the form $f = \frac{1}{2}\sum_{j=1}^df_j$, where
$\{f_j\}_{j=1}^d$ is a $\mathcal{S}_1$ admissible sequence in
$W_T^\prime$.

\end{enumerate}
\end{rmk}

\begin{rmk}It is easy to check that the norming set $W_T$ of
Tsirelson space is closed under $(\frac{1}{2^n},\Sn)$ operations,
namely for any $f_1<\cdots<f_d$ in $W_T$\; $\Sn$-admissible, the
functional $\frac{1}{2^n}\sum_{j=1}^df_j\in W_T$. This explains
that the norming set $W$ of the space $\X$ is a subset of $W_T$.
Therefore Tsirelson space is the unconditional frame on which the
norm of $\X$ is built. As we mentioned in the introduction, $\X$
is the first HI construction which uses Tsirelson space instead of
a mixed Tsirelson one.\label{remxx1.5}
\end{rmk}

As it is shown in \cite{CJT} (see also \cite{CS}), an equivalent
norm on Tsirelson space is described by the following implicit
formula. For $x\in c_{00}$ set
\begin{equation*}
|\!|\!|x|\!|\!| =
\max\big\{\|x\|_0,\;\sup\{\frac{1}{2}\sum_{j=1}^{2d}|\!|\!|E_jx|\!|\!|\}\big\}
\end{equation*}
where the inner supremum is taken over all successive subsets of
the naturals $d \leqslant E_1 <\cdots <E_{2d}$. Then, for any
$\{c_k\}_{k=1}^n\subset\mathbb{R}$, the following holds.

\begin{equation}
\|\sum_{k=1}^nc_ke_k\|_T \leqslant
|\!|\!|\sum_{k=1}^nc_ke_k|\!|\!| \leqslant
3\|\sum_{k=1}^nc_ke_k\|_T \label{tsirelsonequivalence}
\end{equation}

\begin{rmk}
A norming set $W_{(T,|\!|\!|\cdot|\!|\!|)}$ for
$(T,|\!|\!|\cdot|\!|\!|)$ is also defined in a similar manner as
$W_T$.\label{remarknormingthreebars}
\end{rmk}

\subsection*{Special convex combinations} Next, we remind the notion
of the $(n,\e)$ special convex combinations, (see
\cite{AD2},\cite{AGR},\cite{AT}) which is one of the main tools,
used in the sequel.

\begin{dfn} Let $x = \sum_{k\in F}c_ke_k$ be a vector in $c_{00}$.
Then $x$ is said to be a $(n,\e)$ basic special convex combination
(or a $(n,\e)$ basic s.c.c.) if:

\begin{enumerate}

\item[(i)] $F\in\Sn, c_k\geqslant 0$, for $k\in F$ and $\sum_{k\in
F}c_k = 1$.

\item[(ii)] For any $G\subset F, G\in\mathcal{S}_{n-1}$, we have
that $\sum_{k\in G}c_k < \e$.

\end{enumerate}

\end{dfn}

The next result is from \cite{AMT}. For a proof see \cite{AT},
Chapter 2, Proposition 2.3.

\begin{prp}
For any $M$ infinite subset of the naturals, any $n\inn$ and
$\e>0$, there exists $F\subset M, \{c_k\}_{k\in F}$, such that $x
= \sum_{k\in F}c_ke_k$ is a $(n,\e)$ basic s.c.c. \label{prop1.5}
\end{prp}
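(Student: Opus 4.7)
The plan is induction on $n$, building an $(n+1,\varepsilon)$ basic s.c.c.\ as an average of successive $(n,\delta_i)$ basic s.c.c.'s, using the Schreier convolution identity $\mathcal{S}_{n+1} = \mathcal{S}_1 * \mathcal{S}_n$. For the base case $n=1$, pick $N \in M$ with $N > 1/\varepsilon$, list the first $N$ elements $k_1 < \cdots < k_N$ of $M \cap [N,\infty)$, and set $c_{k_j} = 1/N$; then $|F| = N \leq \min F$ gives $F \in \mathcal{S}_1$, and every singleton $G \in \mathcal{S}_0$ has weight $1/N < \varepsilon$.

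For the inductive step, given $M,\varepsilon$, choose $N \in M$ large and, using the inductive hypothesis, iteratively extract a block sequence $x_i = \sum_{k \in F_i} c^{(i)}_k e_k$, each an $(n,\delta_i)$ basic s.c.c.\ supported on $F_i \subset M$, with $\min F_1 \geq N$ and $F_1 < \cdots < F_N$. Set $x = (1/N)\sum_{i=1}^N x_i$ and $F = \bigcup_i F_i$. Then $F \in \mathcal{S}_1 * \mathcal{S}_n = \mathcal{S}_{n+1}$ by the admissibility condition $N \leq \min F_1$, and $\sum_{k \in F} c_k = 1$.

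The crux is verifying condition (ii). Given $G \subset F$ with $G \in \mathcal{S}_n$, decompose $G = H_1 \cup \cdots \cup H_r$ via $\mathcal{S}_n = \mathcal{S}_1 * \mathcal{S}_{n-1}$, so that $H_j \in \mathcal{S}_{n-1}$, $H_1 < \cdots < H_r$, and $r \leq \min G$. Each intersection $H_j \cap F_i$ is in $\mathcal{S}_{n-1}$ by hereditarity, so the inductive bound yields
\begin{equation*}
\sum_{k \in H_j \cap F_i} c^{(i)}_k < \delta_i.
\end{equation*}
A telescoping estimate using the successivity of $\{F_i\}$ and $\{H_j\}$ (consecutive $H_j$'s can overlap the $F_i$-partition only at a single boundary block) bounds $\#\{(i,j) : H_j \cap F_i \neq \emptyset\}$ by $r + N$. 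Splitting indices $i$ by whether the inductive bound $\sum_{k \in G \cap F_i} c^{(i)}_k < s_i \delta_i$ (with $s_i = \#\{j : H_j \cap F_i \neq \emptyset\}$) or the trivial bound $\leq 1$ is smaller, and summing, yields an estimate for $\sum_{k \in G} c_k$ of order $(r+N)\max_i \delta_i / N$.

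The main obstacle is that $r \leq \min G$ can be comparable to $\max F_N$ rather than to $N$, so no single global $\delta$ works. I would handle this by choosing $\delta_i$ \emph{after} $F_1, \ldots, F_{i-1}$ have been fixed, taking $\delta_i$ small enough in terms of $\max F_{i-1}$ so that for the first-hit index $i_0$ with $G \cap F_{i_0} \neq \emptyset$ the bound $\min G \leq \max F_{i_0}$ is absorbed by the relevant tail $\sum_{i \geq i_0} s_i \delta_i$. This recursive calibration of $\delta_i$ is the delicate point of the argument, and crucially relies on the inductive hypothesis producing an $(n,\delta)$-s.c.c.\ for arbitrarily small $\delta > 0$.
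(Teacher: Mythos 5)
Your argument is correct and is essentially the standard one: the paper itself gives no proof, deferring to \cite{AMT} and to \cite{AT}, Chapter 2, Proposition 2.3, and the proof there is exactly your induction --- build the $(n+1,\e)$ s.c.c.\ as an average of $N$ successive $(n,\delta_i)$ s.c.c.'s with $N\leqslant\min F_1$, use the staircase incidence bound $\sum_i s_i\leqslant r+N-1$, take the trivial bound $\leqslant 1$ on the first block met by $G$ (contributing $1/N$), and calibrate $\delta_{i+1}$ against $\max F_i$ so that $r\leqslant\min G\leqslant\max F_{i_0}$ is absorbed. Your phrase ``the tail $\sum_{i\geqslant i_0}s_i\delta_i$'' should read $i>i_0$ (the term $s_{i_0}\delta_{i_0}$ is not controlled, since $\delta_{i_0}$ is fixed before $F_{i_0}$), but your splitting into ``inductive bound versus trivial bound'' already handles the index $i_0$ correctly, so this is only a slip of notation.
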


\begin{dfn}
Let $x_1 <\cdots<x_m$ be vectors in $c_{00}$ and $\psi(k) =
\min\supp x_k$, for $k=1,\ldots,m$. Then $x = \sum_{k=1}^mc_kx_k$
is said to be a $(n,\e)$ special convex combination (or $(n,\e)$
s.c.c.), if $\sum_{k=1}^mc_ke_{\psi(k)}$ is a $(n,\e)$ basic
s.c.c.\label{defscc}
\end{dfn}

\section{Basic evaluations for special convex combinations}

In this section we prove the basic inequality for block sequences
in $\X$, with the auxiliary space actually being Tsirelson space.
This will allow us to evaluate the norm of $(n,\e)$ special convex
combinations and it is critical throughout the rest of the paper.

\begin{dfn} Let $f\in W$ be a functional of type I$_\al$ or I$_\be$, of weight
$w(f) = n$, $f = \frac{1}{2^n}\sum_{j=1}^df_j$. Then, by
definition, there exist $F_1<\cdots<F_p$ successive intervals of
the naturals such that:

\begin{enumerate}

\item[(i)]
 $\cup_{i=1}^pF_i = \{1,\ldots,d\}$

\item[(ii)]
$\{\min\supp f_j: j\in F_i\}\in\mathcal{S}_{n-1}$, for $i=1,\ldots,p$

\item[(iii)]
$\{\min\supp f_{\min F_i}: i=1,\ldots,p\}\in\mathcal{S}_1$

\end{enumerate}

Set $g_i = \frac{1}{2^{n-1}}\sum_{j\in F_i}f_j$, for
$i=1,\ldots,p$. We call $\{g_i\}_{i=1}^p$ a Tsirelson analysis of
$f$.
\end{dfn}

\begin{rmk} If $f\in W$ is a functional of type I$_\al$ or I$_\be$ and
$\{f_i\}_{i=1}^p$ is a Tsirelson analysis of $f$, then $f_i\in W$,
$\{f_i\}_{i=1}^p$ is $\mathcal{S}_1$-admissible and $f =
\frac{1}{2}\sum_{i=1}^pf_i$, although $\{f_i\}_{i=1}^p$ may not be
a very fast growing sequence of $\al$-averages or $\be$-averages.
Moreover, if $w(f)>1$, then $f_i$ is of the same type as $f$ and
$w(f_i) = w(f) - 1$ for $i=1,\ldots,p$. \label{remark2.3}
\end{rmk}

\subsection*{The tree analysis of a functional $\mathbf{f\in W}$} A
key ingredient for evaluating the norm of vectors in $\X$ is the
analysis of the elements $f$ of the norming set $W$. This is
similar to the corresponding concept that has occurred in almost
all previous HI and related constructions (i.e. \cite{AD1},
\cite{AD2}, \cite{AH}, \cite{AT}). Next we briefly describe the
tree analysis in our context.

For any functional $f\in W$ we associate a family
$\{f_\la\}_{\la\in\La}$, where $\La$ is a finite tree which is
inductively defined as follows.

Set $f_\varnothing =f$, where $\varnothing$ denotes the root of
the tree to be constructed. If $f$ is of type 0, then the tree
analysis of $f$ is $\{f_\varnothing\}$. Otherwise, suppose that
the nodes of the tree and the corresponding functionals have been
chosen up to a height $p$ and let $\la$ be a node of height $|\la|
= p$. If $f_\la$ is of type 0, then don't extend any further and
$\la$ is a maximal node of the tree.

If $f_\la$ is of type I$_\al$ or I$_\be$, set the immediate
successors of $\la$ to be the elements of the Tsirelson analysis
of $f_\la$.

If $f_\la$ is of type II, $f = \frac{1}{2}\sum_{j=1}^df_j$, set
the immediate successors of $\la$ to be the $\{f_j\}_{j=1}^d$.

If $f_\la$ is a convex combination, which includes $\al$-averages
and $\be$-averages, $f_\la = \sum_{j=1}^dc_jf_j$, set the
immediate successors of $\la$ to be the $\{f_j\}_{j=1}^d$.

By Remark \ref{remark1.2} it follows that the inductive
construction ends in finitely many steps and that the tree $\La$
is finite.

\begin{rmk} Let $f\in W$ and $\{f_\la\}_{\la\in\La}$ be a tree analysis of
$f$. Then for any $\la\in\La$ not a maximal node, such that
$f_\la$ is not a convex combination, we have that $f_\la =
\frac{1}{2}\sum_{\mu\in\scc(\la)}f_\mu$, where
$\{f_\mu\}_{\mu\in\scc(\la)}$ are $\mathcal{S}_1$-admissible and
by $\scc(\la)$ we denote the immediate successors of $\la$ in
$\La$. \label{remark2.4}
\end{rmk}

\begin{rmk} In a similar manner, for any $f\in W_T^\prime$ (see Remark \ref{remark1.3}\;(ii)), the
tree analysis of $f$ is defined.
\end{rmk}

\begin{prp} Let $x = \sum_{k\in F}c_ke_k$ be a $(n,\e)$ basic
s.c.c. and $G\subset F$. Then the following holds.

\begin{equation*}
\|\sum_{k\in G}c_ke_k\|_T \leqslant\frac{1}{2^n}\sum_{k\in G}c_k +
\e
\end{equation*}\label{prop2.1}
\end{prp}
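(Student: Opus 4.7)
The plan is to pass to the norming set $W_T^\prime$ of Tsirelson space (via Remark \ref{remark1.3}(ii)), whose tree analysis has the clean property that every non-leaf node has the form $f_\lambda = \tfrac{1}{2}\sum_{\mu\in\scc(\lambda)}f_\mu$ with $\mathcal{S}_1$-admissible, and hence block, children. Consequently the leaves of the whole tree have pairwise disjoint supports, so for each $k\in F$ there is at most one leaf $\lambda_k$ with $f_{\lambda_k}=\pm e_k$, contributing $\pm c_k/2^{|\lambda_k|}$ to $f\big(\sum_{k\in G}c_ke_k\big)$.

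I would then split $G=G_1\cup G_2$, where $G_1=\{k\in G:\lambda_k\text{ exists and }|\lambda_k|\geqslant n\}$ and $G_2=\{k\in G:\lambda_k\text{ exists and }|\lambda_k|\leqslant n-1\}$. The $G_1$-contribution is at most
\[
\sum_{k\in G_1}\frac{c_k}{2^{|\lambda_k|}}\leqslant\frac{1}{2^n}\sum_{k\in G}c_k,
\]
which is the first term on the right-hand side. For $G_2$, the main combinatorial step is the claim that the set
\[
A_m(f):=\{k:\exists\;\text{leaf}\;\lambda\text{ of the tree of }f\text{ with }f_\lambda=\pm e_k\text{ and }|\lambda|\leqslant m\}
\]
belongs to $\mathcal{S}_m$ for every $m\geqslant 0$. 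Granting this, $G_2\subseteq A_{n-1}(f)\in\mathcal{S}_{n-1}$, so property (ii) of the $(n,\e)$ basic s.c.c.\ gives $\sum_{k\in G_2}c_k<\e$, and the $G_2$-contribution is bounded by $\e$.

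The proof of the claim is a straightforward induction on the height of the tree, using $\mathcal{S}_1*\mathcal{S}_{m-1}=\mathcal{S}_m$: in the inductive step $f=\tfrac{1}{2}\sum_{j=1}^d f_j$ we have $A_m(f)=\bigcup_{j=1}^d A_{m-1}(f_j)$, each $A_{m-1}(f_j)\in\mathcal{S}_{m-1}$ by induction, and $\mathcal{S}_1$-admissibility $d\leqslant\min\supp f_1\leqslant\min A_{m-1}(f_{j_0})$ (for the first $j_0$ with nonempty $A_{m-1}(f_{j_0})$) together with the fact that the $A_{m-1}(f_j)$'s are successively ordered (supports of siblings are block) gives the required $\mathcal{S}_1$-admissible union. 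The only mild obstacle is being careful about empty $A_{m-1}(f_j)$'s in this admissibility bookkeeping; everything else is essentially a bookkeeping exercise. Taking the supremum over $f\in W_T^\prime$ then yields the stated bound on $\|\sum_{k\in G}c_ke_k\|_T$.
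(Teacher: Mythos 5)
Your proposal is correct and follows essentially the same route as the paper: pass to $W_T^\prime$, split the coordinates according to whether $|f(e_k)|\leqslant 2^{-n}$ (equivalently, whether the corresponding leaf of the tree analysis has depth at least $n$), bound the first part by $\frac{1}{2^n}\sum_{k\in G}c_k$, and show by induction on the tree, via $\mathcal{S}_1*\mathcal{S}_{m-1}=\mathcal{S}_m$, that the remaining coordinates form a set in $\mathcal{S}_{n-1}$, so that condition (ii) of the basic s.c.c.\ contributes the $\e$. The paper phrases the combinatorial step as a height bound on the tree of the restricted functional $G_2f$ rather than your $A_m(f)$ claim, but the content is identical.
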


\begin{proof}

Let $f\in W_T^\prime$. We may assume that $\supp f\subset G$. Set
$G_1 = \{k\in \supp f: |f(e_k)|\leqslant\frac{1}{2^n}\}, G_2 =
\supp f\setminus G_1$. Then clearly $|G_1f(\sum_{k\in
G}c_ke_k)|\leqslant \frac{1}{2^n}\sum_{k\in G}c_k$.

We will show by induction that $G_2\in\mathcal{S}_{n-1}$. Let
$\{f_\la\}_{\la\in\La}$ be a tree analysis of $G_2f$. Then it is
easy to see that $h(\La)\leqslant n-1$. For $\la$ a maximal node
in $\La$, we have that $\supp f_\la\in\mathcal{S}_0$. Assume that
for any $\la\in\La, |\la| = k>0$ we have that $\supp
f_\la\in\mathcal{S}_{n-1-k}$ and let $\la\in\La$, such that $|\la|
= k-1$. Then $f_\la = \frac{1}{2}\sum_{j=1}^df_{\la_j}$, where
$|\la_j| = k$, $\supp f_{\la_j}\in\mathcal{S}_{n-k-1}$ for
$j=1,\ldots,d$ and $\{\min\supp
f_{\la_j}:j=1,\ldots,d\}\in\mathcal{S}_1$. Then $\supp f_\la =
\cup_{j=1}^d\supp f_{\la_j}\in \mathcal{S}_{n-1-(k-1)}$.

The induction is complete and it follows that $G_2 = \supp G_2f\in
\mathcal{S}_{n-1}$ and therefore $G_2f(\sum_{k\in
G}c_ke_k)\leqslant \sum_{k\in G_2}c_k <\e$. Hence, $|f(\sum_{k\in
G}c_ke_k)| < \frac{1}{2^n}\sum_{k\in G}c_k + \e$.

\end{proof}

\begin{prp}[Basic Inequality] Let $\{x_k\}_k$ be a block sequence in $\X$ such that
$\|x_k\|\leqslant 1$, for all $k$ and let $f\in W$. Set $\phi(k) =
\max\supp x_k$, for all $k$. Then there exists $g\in
W_{(T,|\!|\!|\cdot|\!|\!|)}$ (see Remark
\ref{remarknormingthreebars}) such that $2g(e_{\phi(k)}) \geqslant
f(x_k)$, for all $k$. \label{basic}
\end{prp}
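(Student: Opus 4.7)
The plan is to perform a tree induction on a tree analysis $\{f_\la\}_{\la\in\La}$ of $f$, producing in parallel a functional $g\in W_{(T,|\!|\!|\cdot|\!|\!|)}$ given by its own tree analysis. For each $k$ identify a \emph{stopping node} $\la(k)\in\La$ as follows: starting at the root, descend along the unique child of the current node whose range contains $\ran x_k$, and halt as soon as (i) no such child exists (so $\ran x_k$ straddles several children -- a ``spread'' node), (ii) the current node is of type $0$, or (iii) the current node is a convex combination, including $\al$- and $\be$-averages. Above every stopping node the tree consists only of type I$_\al$, I$_\be$ and II nodes, each of which, after the Tsirelson analysis of Remark \ref{remark2.3}, is a $(\tfrac{1}{2},\mathcal{S}_1)$-operation; this is exactly the admissibility pattern of a Tsirelson-type tree, so the auxiliary $g$-tree will be built by mirroring this structure.

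At stopping nodes we cash out via the crude estimate $|f_{\la(k)}(x_k)|\leqslant \|x_k\|\leqslant 1$. For a leaf (type $0$) or convex-combination stopping node set $g_{\la(k)}=\pm e_{\phi(k)}$, giving $2g_{\la(k)}(e_{\phi(k)})=2\geqslant f_{\la(k)}(x_k)$; this is the key step where averages are neutralised, precisely because $\al$- and $\be$-averages lie in $W$ and therefore act by at most $\|x_k\|$. For a spread stopping node of type I$_\al$, I$_\be$ or II, assemble $\pm e_{\phi(k)}$'s (one for each child meeting $\supp x_k$) into a single $(\tfrac{1}{2},\mathcal{S}_1)$-functional of $W_{(T,|\!|\!|\cdot|\!|\!|)}$, again using $\|x_k\|\leqslant 1$ on each individual child's contribution. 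Then $g_\la$ at internal nodes above stopping nodes is defined by upward recursion: at a type I$_\al$, I$_\be$ or II node, the Tsirelson-analysed children $\{f_\mu\}_{\mu\in\scc(\la)}$ are $\mathcal{S}_1$-admissible and the already-constructed $g_\mu$'s satisfy $\min\supp g_\mu\geqslant\min\supp f_\mu$ (since $\phi(k)\geqslant\min\supp x_k$ for any relevant $k$), so $g_\la=\tfrac{1}{2}\sum_{\mu\in\scc(\la)}g_\mu$ is a legitimate element of $W_{(T,|\!|\!|\cdot|\!|\!|)}$. By construction, convex-combination internal nodes above stopping nodes never occur. Telescoping the inequalities $2g_\la(e_{\phi(k)})\geqslant f_\la(x_k)$ up to the root yields the conclusion.

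The principal technical obstacle is checking that the admissibility is preserved in the auxiliary tree. Whenever $x_k$ spreads at a stopping node, one child on each side of $\ran x_k$ may straddle its boundary, so the collection of $e_{\phi(k)}$'s assembled for $g_{\la(k)}$ can include up to two extra summands compared with the $\mathcal{S}_1$-admissibility bound inherited from the $f$-side. The same boundary effect can occur, at worst once at each $k$, when several stopping nodes sit beneath a common ancestor on the $f$-tree but are combined at a single level on the $g$-tree. This is exactly the reason the proposition is stated in the equivalent norm $|\!|\!|\cdot|\!|\!|$ of Remark \ref{remarknormingthreebars}: its basic operation $(\tfrac{1}{2},\{E_1<\cdots<E_{2d}\})$ allows $2d$ rather than $d$ successors, which absorbs the doubling caused by straddling children. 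The factor $2$ in the statement comes from this same doubling and from the crude $\|x_k\|\leqslant 1$ bound used at the stopping nodes.
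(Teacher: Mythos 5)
Your overall architecture---mirroring a tree analysis of $f$ by a functional built in $W_{(T,|\!|\!|\cdot|\!|\!|)}$, cashing out with the crude bound $|h(x_k)|\leqslant\|x_k\|\leqslant 1$ at nodes where $x_k$ is absorbed, and using the $2d$-allowance of the equivalent norm to pay for straddling---is the same as the paper's. The genuine gap is in your stopping rule (iii), the one covering convex combinations and in particular $\al$- and $\be$-averages. A single such node $\la$ is in general the stopping node of \emph{many} indices $k$ simultaneously: for instance if $f=\tfrac12 h_1+\tfrac12 h_2$ is itself a convex combination with $h_1,h_2$ of full overlapping support, every $k$ halts at the root. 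Your prescription $g_{\la(k)}=\pm e_{\phi(k)}$ then assigns to $\la$ a functional that can serve only one of these indices; it cannot satisfy $2g_\la(e_{\phi(k')})\geqslant f_\la(x_{k'})$ for the others. Nor can you assemble the whole family $\{e_{\phi(k')}\}_{k'}$ into a single admissible $(\tfrac12,\mathcal{S}_1)$-type functional, since the set $\{\phi(k')\}$ need not be Schreier-admissible and each coefficient would have to be at least $f_\la(x_{k'})/2$, which a convex combination of the $e_{\phi(k')}$'s cannot deliver either. The crude bound with a single $e_{\phi(k)}$ is legitimate only where exactly one index is in play, namely at type-$0$ leaves and at the ``spread'' indices of a type I$_\al$, I$_\be$ or II node.

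The paper closes exactly this case by \emph{not} stopping at convex combinations: it recurses through them and sets $g_\la=\sum_{\mu\in\scc(\la)}c_\mu g_\mu$, using that $W_{(T,|\!|\!|\cdot|\!|\!|)}$ is itself closed under rational convex combinations; the inequality $2g_\la(e_{\phi(k)})\geqslant f_\la(x_k)$ then follows by averaging the inductive inequalities over $\mu$. You should replace your rule (iii) by this step (after which the averages are still ``neutralised'', because the dilution by the coefficients $c_\mu$ occurs identically on both sides). Two smaller corrections: at a spread node you want one $e_{\phi(k)}$ per spread index $k$ (alongside the recursively built and suitably restricted $g_\mu$'s of the children), not one per child meeting $\supp x_k$---repeated copies of $e_{\phi(k)}$ cannot sit in successive sets and are not needed, since $f_\la\in W$ already gives $f_\la(x_k)\leqslant 1$; and the number of extra summands this creates at a node with $d$ children is up to $d-1$ (one per gap between consecutive children), not two, so the correct count is $d+(d-1)<2d$, which is precisely what the $|\!|\!|\cdot|\!|\!|$-operation absorbs.
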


\begin{proof}
Let $\{f_\la\}_{\la\in\La}$ be a tree analysis of $f$. We will
inductively construct $\{g_\la\}_{\la\in\La}$ such that for any
$\la\in\La$ the following are satisfied.
\begin{enumerate}

\item[(i)] $g_\la\in W_{(T,|\!|\!|\cdot|\!|\!|)}$ and
$2g_\la(e_{\phi(k)}) \geqslant f_\la(x_k)$, for any $k$.

\item[(ii)] $\supp g_\la \subset \{\phi(k): \ran f_\la\cap\ran
x_k\neq\varnothing\}$

\end{enumerate}

For $\la\in\La$ a maximal node, if there exists $k$ such that
$\ran f_\la\cap\ran x_k\neq\varnothing$, set $g_\la =
e^*_{\phi(k)}$. Otherwise set $g_\la = 0$.

Let $\la\in\La$ be a non-maximal node, and suppose that
$\{g_\mu\}_{\mu>\la}$ have been chosen. We distinguish two
cases.\vskip3pt

\noindent {\em Case 1:} $f_\la$ is a convex combination (i.e.
$f_\la$ is not of type 0, I$_\al$, I$_\be$, or II).

If $f_\la = \sum_{\mu\in\scc(\la)}c_\mu f_\mu$, set $g_\la =
\sum_{\mu\in\scc(\la)}c_\mu g_\mu$.\vskip3pt

\noindent {\em Case 2:} $f_\la$ is not a convex combination.

If $f_\la = \frac{1}{2}\sum_{j=1}^df_{\mu_j}$, where $\scc(\la) =
\{\mu_j\}_{j=1}^d$ such that $f_{\mu_1}<\cdots<f_{\mu_d}$, set
\begin{eqnarray*}
G_\la &=&\{k: \ran f_\la\cap\ran x_k\neq\varnothing\}\\
G_1 &=& \{k\in G_\la:\;\text{there exists at most
one}\;j\;\text{with}\;\ran f_{\mu_j}\cap\ran
x_k\neq\varnothing\}\\
G_2 &=& \{k\in G_\la:\;\text{there exist at least
two}\;j\;\text{with}\;\ran f_{\mu_j}\cap\ran
x_k\neq\varnothing\}\\
I_j &=& \{k\in G_1: \ran x_k\cap\ran
f_{\mu_j}\neq\varnothing\}\quad\text{for}\;j = 1,\ldots,d.
\end{eqnarray*}
Observe that $\#G_2\leqslant d-1$.

For $j = 1,\ldots,d$ set $g_j^\prime = g_{\mu_j}|_{\phi(I_j)}$ and
for $k\in G_2$ set $g_k = e^*_{\phi(k)}$. It is easy to check that
if we set $g_\la = \frac{1}{2}\big(\sum_{j=1}^dg_j^\prime +
\sum_{k\in G_2}g_k\big)$, then $g_\la$ is the desired functional.

The induction is complete. Set $g = g_\varnothing$

\end{proof}

\begin{rmk}
In the previous constructions (see \cite{AD1}, \cite{AD2},
\cite{AH}, \cite{AT}), the basic inequality is used for estimating
the norm of linear combinations of block vectors which are RIS. In the present
paper the basic inequality is stronger, as it is able to provide upper estimations for any block vectors. Moreover, RIS sequences are defined in a different manner as in previous constructions and they also play a different role, which will be discussed in the sequel.
\end{rmk}

\begin{cor} Let $\{x_k\}_{k}$ be a block sequence in $\X$ such that
$\|x_k\|\leqslant 1, \{c_k\}_k\subset\mathbb{R}$ and $\phi(k) =
\max\supp x_k$ for all $k$. Then:
\begin{equation*}
\|\sum_kc_kx_k\| \leqslant 6\|\sum_kc_ke_{\phi(k)}\|_T
\end{equation*}\label{cor2.20}
\end{cor}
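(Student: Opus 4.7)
The plan is to deduce the corollary almost directly from the Basic Inequality (Proposition~\ref{basic}) together with the norm equivalence \eqref{tsirelsonequivalence} between $\|\cdot\|_T$ and $|\!|\!|\cdot|\!|\!|$. First, I would reduce to the case $c_k\geqslant 0$ for all $k$: replacing $x_k$ by $\sgn(c_k)x_k$ does not change $\|x_k\|$, the blockness, or $\phi(k)=\max\supp x_k$, and allows us to assume $c_k\geqslant 0$ while preserving the target inequality.

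Next, fix an arbitrary $f\in W$. By Proposition~\ref{basic} there exists $g\in W_{(T,|\!|\!|\cdot|\!|\!|)}$ such that $2g(e_{\phi(k)})\geqslant f(x_k)$ for every $k$. Multiplying by $c_k\geqslant 0$ and summing yields
\begin{equation*}
f\Big(\sum_k c_k x_k\Big)=\sum_k c_k f(x_k)\leqslant 2\sum_k c_k\, g(e_{\phi(k)})=2g\Big(\sum_k c_k e_{\phi(k)}\Big)\leqslant 2\,|\!|\!|\textstyle\sum_k c_k e_{\phi(k)}|\!|\!|,
\end{equation*}
where the last inequality uses that $g$ belongs to a norming set for $(T,|\!|\!|\cdot|\!|\!|)$. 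Invoking the right half of \eqref{tsirelsonequivalence}, this upper bound is at most $6\|\sum_k c_k e_{\phi(k)}\|_T$.

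Finally, since the estimate holds uniformly over $f\in W$ and $W$ is symmetric ($-f\in W$ whenever $f\in W$), taking the supremum gives $\|\sum_k c_k x_k\|\leqslant 6\|\sum_k c_k e_{\phi(k)}\|_T$, as required.

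There is no real obstacle here beyond bookkeeping: the sign reduction is cosmetic, and the only quantitative ingredients are the factor $2$ coming from Proposition~\ref{basic} and the factor $3$ from \eqref{tsirelsonequivalence}, whose product is exactly the constant $6$ appearing in the statement. The conceptual content has already been absorbed into Proposition~\ref{basic}, which reduces norm estimates in $\X$ to norm estimates in (an equivalent renorming of) Tsirelson space via the map $x_k\mapsto e_{\phi(k)}$.
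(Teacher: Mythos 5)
Your proof is correct and follows essentially the same route as the paper's: apply Proposition~\ref{basic} to the sequence $\{\sgn(c_k)x_k\}_k$, sum against the coefficients, and combine the factor $2$ with the factor $3$ from \eqref{tsirelsonequivalence}. The only implicit point in your sign reduction is the $1$-unconditionality of the Tsirelson norm (so that the right-hand side is unchanged when $c_k$ is replaced by $|c_k|$), which the paper uses just as tacitly.
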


\begin{proof}
Let $f\in W$. Apply the basic inequality and take $g\in
W_{(T,|\!|\!|\cdot|\!|\!|)}$, such that if $\phi(k) = \max\supp
x_k$ and $y_k = \sgn(c_k)x_k$ for all $k$, we have that
$2g(e_{\phi(k)}) \geqslant f(y_k),\;\text{for any}\;k$. It follows
that
\begin{equation*}
2g(\sum_k|c_k|e_{\phi(k)})\geqslant f(\sum_kc_kx_k).
\end{equation*}
Therefore, applying \eqref{tsirelsonequivalence}, we get
\begin{equation*}
\|\sum_kc_kx_k\| \leqslant 2|\!|\!|\sum_k|c_k|e_{\phi(k)}|\!|\!| =
2|\!|\!|\sum_kc_ke_{\phi(k)}|\!|\!|\leqslant 2\cdot
3\|\sum_kc_ke_{\phi(k)}\|_T
\end{equation*}
\end{proof}

\begin{cor} Let $x = \sum_{k=1}^mc_kx_k$ be a $(n,\e)$ s.c.c. in $\X$, such
that $\|x_k\|\leqslant 1$, for $k=1,\ldots,m$. If
$F\subset\{1,\ldots,m\}$, then
\begin{equation*}
\|\sum_{k\in F}c_kx_k\| \leqslant \frac{6}{2^n}\sum_{k\in F}c_k +
12\e.
\end{equation*}
In particular, we have that $\|x\|\leqslant \frac{6}{2^n} +
12\e$.\label{cor2.21}
\end{cor}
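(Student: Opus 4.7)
The plan is to invoke Corollary \ref{cor2.20} to reduce the statement to a Tsirelson-norm estimate. With $\phi(k)=\max\supp x_k$, Corollary \ref{cor2.20} gives
$$\|\sum_{k\in F}c_kx_k\|\leqslant 6\|\sum_{k\in F}c_ke_{\phi(k)}\|_T,$$
so it suffices to prove
$$\|\sum_{k\in F}c_ke_{\phi(k)}\|_T\leqslant\frac{1}{2^n}\sum_{k\in F}c_k+2\e.$$

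The central structural claim is that $\sum_{k=1}^m c_ke_{\phi(k)}$ is itself a $(n,2\e)$ basic s.c.c., viewed as a vector supported on $\{\phi(k):k=1,\ldots,m\}$ with coefficient $c_k$ at coordinate $\phi(k)$. Once this is established, Proposition \ref{prop2.1} (with parameter $2\e$) applied to the subset $\{\phi(k):k\in F\}$ yields the required Tsirelson bound, and substituting into the display above gives $\|\sum_{k\in F}c_kx_k\|\leqslant\frac{6}{2^n}\sum_{k\in F}c_k+12\e$.

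To verify the claim, $c_k\geqslant 0$ and $\sum_k c_k=1$ are immediate, while $\{\phi(k):k=1,\ldots,m\}\in\mathcal{S}_n$ follows from $\{\psi(k):k=1,\ldots,m\}\in\mathcal{S}_n$ by the spreading property of the Schreier families, since $\phi(k)\geqslant\psi(k)$. The core task is to verify the small-sum condition: for every $G\subset\{1,\ldots,m\}$ with $\{\phi(k):k\in G\}\in\mathcal{S}_{n-1}$, we must show $\sum_{k\in G}c_k<2\e$.

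This last step is the main obstacle, because the $(n,\e)$-s.c.c.\ hypothesis controls coefficient sums only through $\psi$-Schreier memberships, and spreading carries Schreier sets in the direction $\psi\mapsto\phi$ (coordinates moving up), not the reverse. To overcome this, I will use the block structure of $\{x_k\}$, which gives $\psi(k_{j+1})>\phi(k_j)$. Writing $G=\{k_1<\cdots<k_r\}$, hereditarity of $\mathcal{S}_{n-1}$ yields $\{\phi(k_1),\ldots,\phi(k_{r-1})\}\in\mathcal{S}_{n-1}$, and then spreading upward via the inequalities $\psi(k_{j+1})>\phi(k_j)$ produces $\{\psi(k_2),\ldots,\psi(k_r)\}\in\mathcal{S}_{n-1}$. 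The s.c.c.\ hypothesis then gives $\sum_{j=2}^r c_{k_j}<\e$, and adding $c_{k_1}<\e$ (which follows from $\{\psi(k_1)\}\in\mathcal{S}_0\subset\mathcal{S}_{n-1}$) yields the required bound $\sum_{k\in G}c_k<2\e$, completing the claim.
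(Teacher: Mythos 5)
Your proof is correct and takes essentially the same route as the paper: reduce to a Tsirelson-norm estimate via Corollary \ref{cor2.20}, show that $\sum_{k}c_ke_{\phi(k)}$ is a $(n,2\e)$ basic s.c.c., and apply Proposition \ref{prop2.1}. The only difference is that you spell out the hereditarity-plus-spreading argument (using $\psi(k_{j+1})>\phi(k_j)$) behind the $(n,2\e)$ claim, which the paper dismisses with ``it easily follows.''
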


\begin{proof}
Set $\phi(k) = \max\supp x_k, \psi(k) = \min\supp x_k$. Corollary
\ref{cor2.20} yields that $\|\sum_{k\in F}c_kx_k\| \leqslant
6\|\sum_{k\in F}c_ke_{\phi(k)}\|_T$.

Since, according to the assumption, $\sum_{k\in F}c_ke_{\psi(k)}$
is a $(n,\e)$ basic s.c.c., it easily follows that $\sum_{k\in
F}c_ke_{\phi(k)}$ is a $(n,2\e)$ basic s.c.c.

By Proposition \ref{prop2.1} the result follows.

\end{proof}

\begin{cor} The basis of $\X$ is shrinking.\label{cor2.22}
\end{cor}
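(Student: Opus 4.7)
The plan is to argue by contradiction using the strong upper estimate from Corollary \ref{cor2.21}. Recall the James characterisation: the basis $\{e_n\}_n$ of $\X$ is shrinking if and only if for every $x^*\in\X^*$ and every bounded block sequence $\{x_k\}_k$ one has $x^*(x_k)\to 0$. So I would assume the contrary and, after passing to a subsequence and possibly replacing $x^*$ by $-x^*$, produce a normalized block sequence $\{x_k\}_k$, a functional $x^*\in\X^*$ of norm at most one, and a constant $\delta>0$ with $x^*(x_k)\geqslant\delta$ for every $k$.

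The key step is to combine Proposition \ref{prop1.5} with Corollary \ref{cor2.21}. Fix $n\inn$ and $\e>0$, to be chosen. Setting $\psi(k)=\min\supp x_k$ and applying Proposition \ref{prop1.5} to the infinite set $\{\psi(k):k\inn\}$ produces a finite $F$ and coefficients $\{c_k\}_{k\in F}$ such that $\sum_{k\in F}c_k e_{\psi(k)}$ is an $(n,\e)$ basic s.c.c. Then $x=\sum_{k\in F}c_k x_k$ is an $(n,\e)$ s.c.c.\ in $\X$ by Definition \ref{defscc}, and Corollary \ref{cor2.21} immediately gives $\|x\|\leqslant \frac{6}{2^n}+12\e$.

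To finish, I would observe that by linearity $x^*(x)\geqslant \delta\sum_{k\in F}c_k=\delta$, and then choose $n$ large and $\e$ small enough that $\frac{6}{2^n}+12\e<\delta$, yielding
\begin{equation*}
\delta\leqslant x^*(x)\leqslant \|x^*\|\,\|x\|<\delta,
\end{equation*}
the desired contradiction. There is no serious obstacle here: all the work has already been done in the basic inequality and its reduction to the Tsirelson norm. What is worth emphasising is that the bound of Corollary \ref{cor2.21}, which forces arbitrarily small norms for s.c.c.\ of \emph{any} bounded block sequence, is much stronger than in asymptotic $\ell_1$ HI constructions, and in particular renders the shrinking property an essentially immediate consequence of the machinery developed so far.
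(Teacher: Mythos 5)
Your proof is correct and follows essentially the same route as the paper: a contradiction argument in which Proposition \ref{prop1.5} produces an $(n,\e)$ special convex combination of the offending block sequence and Corollary \ref{cor2.21} forces its norm below $\delta$ while the functional keeps it at least $\delta$. The only cosmetic difference is that you make the James-type reduction to block sequences explicit, whereas the paper states it implicitly.
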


\begin{proof}
Suppose that it is not. Then there exist $x^*\in\X^*, \|x^*\| =
1$, a normalized block sequence $\{x_k\}_{k\inn}$ in $\X$ and
$\de>0$, such that $x^*(x_k)>\de$, for all $k\inn$.

Choose $n\inn$, such that $\frac{1}{2^n}<\frac{\de}{12}$ and
$\e>0$, such that $\e<\frac{\de}{24}$. By Proposition
\ref{prop1.5} there exists $F$ a subset of $\mathbb{N}$, such that
$x = \sum_{k\in F}c_kx_k$ is a $(n,\e)$ s.c.c.

By Corollary \ref{cor2.21} we have that $\de > \|x\| \geqslant
x^*(x)
>\de$. A contradiction, which completes the proof.
\end{proof}

\begin{prp} The basis of $\X$ is boundedly complete.\label{prop2.23}
\end{prp}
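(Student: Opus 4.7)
I would argue by contradiction, mirroring the scheme of the proof of Corollary \ref{cor2.22}. Suppose the basis of $\X$ is not boundedly complete. A standard extraction via bimonotonicity produces a seminormalized block sequence $\{y_k\}_{k}$ in $\X$ with $\delta\leqslant\|y_k\|\leqslant 1$ and bounded partial sums $\|\sum_{k=1}^{N}y_k\|_\X\leqslant C$ for every $N$. By Corollary \ref{cor2.22} the basis is shrinking, so $(y_k)$ is weakly null.

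For arbitrary $n\inn$ and $\e>0$, I would invoke Proposition \ref{prop1.5} to obtain a finite $F\subset\mathbb{N}$ and coefficients $(c_k)_{k\in F}$ so that $z=\sum_{k\in F}c_ky_k$ is a $(n,\e)$ special convex combination of the block sequence $(y_k)$. Corollary \ref{cor2.21} then yields the upper estimate $\|z\|_\X\leqslant 6/2^n+12\e$, which can be made arbitrarily small by letting $n$ be large and $\e$ small.

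The heart of the argument is to produce a matching lower bound on $\|z\|_\X$ that is uniform in $n$ and $\e$. Since $\|y_k\|\geqslant\delta$, for each $k$ there exists $f_k\in W$ with $\supp f_k\subset\ran y_k$ and $f_k(y_k)\geqslant\delta$. The plan is to assemble the $f_k$'s into a single functional $g\in W$ of type I$_\al$: first promote each $f_k$ to an $\al$-average of a sufficiently large size (so that the resulting sequence is very fast growing), and then combine them through the $(\tfrac{1}{2^n},\Sn)$-operation, choosing the indices so that the $\Sn$-admissibility of $g$ is aligned with the $\Sn$-structure of $F$. In this way the factor $1/2^n$ inherent in a type I$_\al$ functional is compensated by the unit total mass $\sum_{k\in F}c_k=1$ of the s.c.c., so that $g(z)\geqslant c\delta$ for an absolute constant $c>0$. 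Choosing $n$ large and $\e$ small makes the upper bound $6/2^n+12\e$ strictly less than $c\delta\leqslant g(z)\leqslant\|z\|_\X$, which is the desired contradiction.

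The main obstacle is precisely this lower bound. In Corollary \ref{cor2.22} a single dual functional evaluated every $x_k$ uniformly; here, however, the $f_k$ depend on $k$, and (since $(y_k)$ is weakly null) no single functional in $\X^*$ stays bounded below on all $y_k$. Consequently one must use the standing hypothesis $\|\sum_{k=1}^{N}y_k\|_\X\leqslant C$ to place the promoted $\al$-averages so that the resulting type I$_\al$ functional is simultaneously very fast growing, $\Sn$-admissible, and evaluates $z$ by a constant multiple of $\delta$. Achieving this coherence between the combinatorics of the $\al$-averages and that of the s.c.c. coefficients is the delicate step.
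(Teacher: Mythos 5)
There is a genuine gap, and it lies exactly where you placed ``the heart of the argument.'' The lower bound $g(z)\geqslant c\delta$ you hope to prove for an $(n,\e)$ s.c.c.\ $z=\sum_{k\in F}c_ky_k$ cannot hold: Corollary \ref{cor2.21} asserts that \emph{every} $(n,\e)$ s.c.c.\ of normalized blocks has norm at most $\frac{6}{2^n}+12\e$, so no lower bound independent of $n$ is available. Concretely, if $g=\frac{1}{2^n}\sum_q\al_q$ is of type I$_\al$, the factor $\frac{1}{2^n}$ is \emph{not} compensated by $\sum_{k\in F}c_k=1$; even in the best case where each $\al_q$ evaluates its piece of $z$ by about $\delta c_k$, you only get $g(z)\approx\delta/2^n$, which matches the upper bound and yields no contradiction. (This is precisely the computation in Proposition \ref{prop3.5}, where the lower estimate for an s.c.c.\ is $\theta/2^n$, never $\theta$.) Worse, to make $\{\al_q\}$ very fast growing you must take $s(\al_q)$ enormous, and an $\al$-average of large size acting on a single block vector $y_k$ gives essentially nothing --- this is the whole point of the saturation under constraints. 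So the s.c.c.\ route, which works for shrinkingness in Corollary \ref{cor2.22}, is structurally the wrong tool for bounded completeness.

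The paper's proof does not use special convex combinations at all; it exploits the fact that bounded completeness gives you bounded \emph{sums} $\|\sum_{k=\ell}^{\ell+m}x_k\|\leqslant 1$ rather than convex combinations. One groups the $x_k$ into $d$ consecutive intervals $F_1,\ldots,F_d$ with $d=\min\supp x_{k_0}>2/\e$ and $\#F_j$ growing fast enough, sets $y_j=\sum_{k\in F_j}x_k$, and forms $\al_j=\frac{1}{\#F_j}\sum_{k\in F_j}f_k$ with $f_k(x_k)>\e$, $\ran f_k\subset\ran x_k$. The averaging denominator $\#F_j$ is exactly cancelled because $y_j$ is a \emph{sum} of $\#F_j$ vectors, so $\al_j(y_j)>\e$ with no loss, while $s(\al_j)=\#F_j$ makes the sequence very fast growing. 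A single weight-one operation then gives $f=\frac{1}{2}\sum_{j=1}^d\al_j\in W$ with $f(\sum_{j=1}^dy_j)>\frac{\e d}{2}>1$, contradicting $\|\sum_{j=1}^dy_j\|\leqslant 1$. You should replace your s.c.c.\ scheme with this blocking argument.
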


\begin{proof}
Assume that it is not. Then there exist $\e>0$ and
$\{x_k\}_{k\inn}$ a block sequence in $\X$, such that $\|x_k\| >
\e$ and $\|\sum_{k=\ell}^{\ell+m}x_k\|\leqslant 1$, for all
$\ell,m\inn$.

Choose $k_0$ such that $d = \min\supp x_{k_0} > \frac{2}{\e}$. Set
$F_1 = \{k_0\}$ and inductively choose $F_1,\ldots,F_d$, intervals
of the naturals such that
\begin{enumerate}

\item[(i)] $\max F_j + 1 = \min F_{j+1}$, for $j<d$ and

\item[(ii)] $\#F_j > \max\{\#F_{j-1}, 2^{\max\supp x_{\max F_{j-1}}}\}$, for $1<j\leqslant d$.

\end{enumerate}

Then, if we set $y_j = \sum_{k\in F_j}x_k$, we have that $\|\sum_{j=1}^dy_j\|\leqslant 1$.

On the other hand, notice that for $j=1,\ldots,d$, there exists $\al_j$ an $\al$-average in $W$, such that
\begin{enumerate}

\item[(i)] $\ran \al_j\subset \ran y_j$, therefore $\{\al_j\}_{j=1}^d$ is $\mathcal{S}_1$-admissible.

\item[(ii)] $s(\al_j) = \#F_j$, therefore $\{\al_j\}_{j=1}^d$ is very fast growing.

\item[(iii)] $\al_j(y_j) > \e$
\end{enumerate}
From the above it follows $f = \frac{1}{2}\sum_{j=1}^d\al_j$ is a functional of type I$_\al$ in $W$ and $f(\sum_{j=1}^dy_j) > \frac{\e\cdot d}{2} > 1$. Since this cannot be the case, the proof is complete.
\end{proof}

These last two results and a well known result due to R. C. James
\cite{J}, allow us to conclude the following.

\begin{cor} The space $\X$ is reflexive.\label{cor2.24}
\end{cor}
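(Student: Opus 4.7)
The plan is to invoke the classical theorem of R.C. James which characterizes reflexivity of a Banach space with a Schauder basis in terms of two properties of the basis, both of which have already been established in the preceding two results. Specifically, James's theorem states that a Banach space $X$ with a Schauder basis $\{e_n\}_{n\inn}$ is reflexive if and only if the basis is simultaneously shrinking and boundedly complete.

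First I would note that $\X$ has a Schauder basis $\{e_n\}_{n\inn}$ by construction (indeed, a bimonotone one, as remarked right after the definition of the norming set $W$). Then Corollary \ref{cor2.22} gives the shrinking property and Proposition \ref{prop2.23} gives the boundedly complete property. Applying James's theorem yields reflexivity of $\X$, finishing the proof.

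There is no genuine obstacle here; the corollary is a straightforward citation once the two preceding results are in hand. The substance of the work was done in Proposition \ref{prop2.23} (where the $\al$-averages were used to produce a type I$_\al$ functional that forces a contradiction if partial sums of a nontrivial block sequence stay bounded) and in Corollary \ref{cor2.22} (where $(n,\e)$ special convex combinations were used via the norm estimate $\|x\|\leqslant 6/2^n+12\e$ to prevent the existence of a seminormalized weakly null sequence on which a functional is bounded away from zero). The present statement merely packages these into the conclusion via James's theorem.
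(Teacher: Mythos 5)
Your proposal is correct and follows exactly the paper's argument: the paper likewise deduces reflexivity from Corollary \ref{cor2.22} (shrinking) and Proposition \ref{prop2.23} (boundedly complete) via the classical theorem of R.~C. James. Nothing further is needed.
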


\begin{dfn} Let $F$ either be $\mathbb{N}$ or an initial segment of the natural numbers. A block sequence $\{x_k\}_{k\in F}$ is said to be a
$(C,\{n_k\}_{k\in F})$\;$\al$-rapidly increasing sequence (or
$(C,\{n_k\}_{k\in F})$\;$\al$-RIS), for a positive constant $C\geqslant
1$ and a strictly increasing sequence of naturals $\{n_k\}_{k\in F}$, if
$\|x_k\| \leqslant C$ for all $k\in F$ and the following conditions are
satisfied.

\begin{enumerate}

\item[(i)] For any $k\in F$, for any functional $f$ of type I$_\al$ of
weight $w(f) = j < n_k$ we have that $|f(x_k)| < \frac{C}{2^j}$

\item[(ii)] For any $k\in F$ we have that
$\frac{1}{2^{n_{k+1}}}\max\supp x_{k} < \frac{1}{2^{n_k}}$

\end{enumerate}\label{def3.10}
\end{dfn}

\begin{rmk} Let $\{x_k\}_{k\inn}$ be a block sequence in $\X$, such that there
exist a positive constant $C$ and $\{n_k\}_{k\inn}$ strictly
increasing naturals, such that $\|x_k\|\leqslant C$ for all $k$
and condition (i) from Definition \ref{def3.10} is satisfied. Then
passing, if necessary, to a subsequence, $\{x_k\}_{k\inn}$ is
$(C,\{n_k\}_{k\inn})$\;$\al$-RIS.\label{rem3.11}
\end{rmk}

\begin{dfn}
Let $n\inn, C\geqslant 1, \theta>0$. A vector $x\in\X$ is called a $(C,\theta,n)$ vector if the following hold. There exist $0<\e<\frac{1}{36C2^{3n}}$ and $\{x_k\}_{k=1}^m$ a block sequence in $\X$ with $\|x_k\|\leqslant C$ for $k=1,\ldots,m$ such that

\begin{itemize}

\item[(i)] $\min\supp x_1 \geqslant 8C2^{2n}$

\item[(ii)] There exist $\{c_k\}_{k=1}^m\subset [0,1]$ such that $\sum_{k=1}^mc_kx_k$ is a $(n,\e)$ s.c.c.

\item[(iii)] $x = 2^n\sum_{k=1}^mc_kx_k$ and $\|x\| \geqslant \theta$.

\end{itemize}
If moreover there exist $\{n_k\}_{k=1}^m$ strictly increasing natural numbers with $n_1 > 2^{2n}$ such that $\{x_k\}_{k=1}^m$ is $(C,\{n_k\}_{k=1}^m)$ $\al$-RIS, then $x$ is called a $(C,\theta,n)$ exact vector.\label{defvector}

\end{dfn}

\begin{rmk}
Let $x$ be a $(C,\theta,n)$ vector in $\X$. Then, using Corollary \ref{cor2.21} we conclude that $\|x\| < 7C$.
\label{remexactest}
\end{rmk}

\section{The $\al,\be$ indices}

To each block sequence we will associate two indices related to
$\al$ and $\be$ averages. In this section we will show that every
normalized block sequence $\{x_n\}_n$ has a further normalized
block sequence $\{y_n\}_n$ such that on it both indices $\al$ and
$\be$ are equal to zero. As we will show in the next section, this
is sufficient, for a sequence to have a subsequence generating a
$c_0$ spreading model.

\begin{dfn}
Let $\{x_k\}_{k\inn}$ be a block sequence in $\X$ that satisfies
the following. For any $n\inn$, for any very fast growing sequence
$\{\al_q\}_{q\inn}$ of $\al$-averages in $W$ and for any
$\{F_k\}_{k\inn}$ increasing sequence of subsets of the naturals,
such that $\{\al_q\}_{q\in F_k}$ is $\Sn$-admissible, the
following holds. For any $\{x_{n_k}\}_{k\inn}$ subsequence of
$\{x_k\}_{k\inn}$, we have that $\lim_k\sum_{q\in
F_k}|\al_q(x_{n_k})| = 0$.

Then we say that the $\al$-index of $\{x_k\}_{k\inn}$ is zero and
write $\adx = 0$. Otherwise we write $\adx > 0$.
\end{dfn}

\begin{dfn}
Let $\{x_k\}_{k\inn}$ be a block sequence in $\X$ that satisfies
the following. For any $n\inn$, for any very fast growing sequence
$\{\be_q\}_{q\inn}$ of $\be$-averages in $W$ and for any
$\{F_k\}_{k\inn}$ increasing sequence of subsets of the naturals,
such that $\{\be_q\}_{q\in F_k}$ is $\Sn$-admissible, the
following holds. For any $\{x_{n_k}\}_{k\inn}$ subsequence of
$\{x_k\}_{k\inn}$, we have that $\lim_k\sum_{q\in
F_k}|\be_q(x_{n_k})| = 0$.

Then we say that the $\be$-index of $\{x_k\}_{k\inn}$ is zero and
write $\bdx = 0$. Otherwise we write $\bdx > 0$.
\end{dfn}

\begin{prp} Let $\{x_k\}_{k\inn}$ be a block sequence in $\X$. Then the
following assertions are equivalent.
\begin{enumerate}

\item[(i)] $\adx = 0$

\item[(ii)] For any $\e>0$ there exists $j_0\inn$ such that for
any $j\geqslant j_0$ there exists $k_j\inn$ such that for any
$k\geqslant k_j$, and for any $\{\al_q\}_{q=1}^d$
$\mathcal{S}_j$-admissible and very fast growing sequence of
$\al$-averages such that $s(\al_q) > j_0$, for $q=1,\ldots,d$, we
have that $\sum_{q=1}^d|\al_q(x_k)| < \e$.

\end{enumerate}\label{prop3.3}
\end{prp}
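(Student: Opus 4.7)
The plan is to handle the two implications separately; (i) $\Rightarrow$ (ii) is proved via contrapositive with a dichotomy based on the behaviour of isolated $\al$-averages on $\{x_k\}_k$.

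For (ii) $\Rightarrow$ (i), fix $n\inn$, a very fast growing sequence $\{\al_q\}_q$ of $\al$-averages, an increasing family $\{F_k\}_k$ with $\{\al_q\}_{q\in F_k}$ being $\Sn$-admissible, and a subsequence $\{x_{n_k}\}_k$. Given $\e>0$, apply (ii) to obtain $j_0$, set $j=\max\{n,j_0\}$, and let $k_j$ be the corresponding index. For $k$ large enough one has $n_k\geqslant k_j$, and because $\min F_k\to\infty$ while the sizes $s(\al_q)$ in a very fast growing sequence tend to infinity, one also has $s(\al_q)>j_0$ for every $q\in F_k$. Since $\Sn\subseteq\mathcal{S}_j$, the sequence $\{\al_q\}_{q\in F_k}$ is $\mathcal{S}_j$-admissible, so (ii) yields $\sum_{q\in F_k}|\al_q(x_{n_k})|<\e$; letting $\e\to 0$ completes this direction.

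For the converse I argue by contrapositive, so suppose $\e_0>0$ witnesses the failure of (ii). \textbf{Case (a):} for every $M\inn$ there exist arbitrarily large $k$ and an $\al$-average $\al$ with $s(\al)>M$ and $|\al(x_k)|\geqslant\e_0/2$. Inductively choosing such $k_m$ and $\al^{(m)}$, with $s(\al^{(m)})$ taken large enough relative to the previous step to enforce the very fast growing condition, produces a very fast growing sequence $\{\al^{(m)}\}_m$ with $|\al^{(m)}(x_{k_m})|\geqslant\e_0/2$; the singleton sets $F_m=\{m\}$ are trivially $\mathcal{S}_0$-admissible, which violates (i) for $n=0$. \textbf{Case (b):} otherwise there exist $M^*,K^*\inn$ such that $|\al(x_k)|<\e_0/2$ for every $\al$-average $\al$ with $s(\al)>M^*$ and every $k\geqslant K^*$. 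Applying the failure of (ii) \emph{once} with $j_0:=M^*+1$ produces a fixed $j_2\geqslant j_0$ so that for infinitely many $k$ there is a bad $\mathcal{S}_{j_2}$-admissible very fast growing sequence of $\al$-averages with sizes $>j_0$ and sum $\geqslant\e_0$ at $x_k$.

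To assemble the violation in Case (b), pick $k_m\to\infty$ among these bad indices and, after restricting each functional in the bad sequence at $x_{k_m}$ to $\ran x_{k_m}$ (which leaves its action on $x_{k_m}$ unchanged while preserving $W$-membership, $\mathcal{S}_{j_2}$-admissibility and the very fast growing property), discard the first element. By the Case (b) hypothesis the discarded term contributes less than $\e_0/2$, so the reduced block still sums to at least $\e_0/2$; moreover its new leading element has size exceeding $2^{\min\supp x_{k_m}}$ by the within-block very fast growing condition. Choosing $k_{m+1}$ with $\min\supp x_{k_{m+1}}$ larger than $\max\supp x_{k_m}$ and than $\log_2$ of the largest size appearing in block $m$, the concatenation of the reduced blocks is very fast growing. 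Setting $n:=j_2$ and letting $F_m$ be the index set of the $m$-th reduced block, each $F_m$ is $\mathcal{S}_n$-admissible as a subset of an $\mathcal{S}_{j_2}$-admissible set, while $\sum_{q\in F_m}|\al_q(x_{k_m})|\geqslant\e_0/2$, contradicting (i). The main technical obstacle is that $j_2$ must be chosen only once so that a single admissibility parameter $n$ accommodates every block; this is precisely what the Case (b) analysis enables by pinning $j_0$ down at the outset.
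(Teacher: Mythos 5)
Your proof is correct. The easy direction (ii) $\Rightarrow$ (i) matches what the paper leaves to the reader. For the hard direction the paper argues by contradiction rather than contrapositive: assuming both $\adx = 0$ and the failure of (ii), it extracts from the negation of (ii) an infinite family of bad blocks $\{F_j\}_j$ arranged so that the tails $F_j^\prime = F_j\setminus\{\min F_j\}$ concatenate into a single very fast growing, $\mathcal{S}_{j_1}$-admissible family; the hypothesis $\adx = 0$ then forces $\sum_{q\in F_j^\prime}|\al_q(x_{k_j})|\to 0$, so the head $\al_{\min F_j}$ must carry mass $>\e/2$, and iterating produces a very fast growing sequence of single $\al$-averages (a singleton family) violating $\adx = 0$. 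Your dichotomy replaces this mid-proof invocation of (i): in Case (a) the big heads are obtained directly and give the singleton violation, while in Case (b) the heads are uniformly small, so the tails carry mass $\geqslant\e_0/2$ and themselves form the violating family at level $n = j_2$. Both arguments share the essential bookkeeping — restricting to $\ran x_{k_m}$, discarding the first element of each block so that the concatenated sequence is very fast growing, and using that the Schreier families are hereditary and spreading — but yours is a clean contrapositive that never uses (i) during the construction and applies the negation of (ii) only once in Case (b), at the cost of the case split and of producing a non-singleton violating family there. One small point to tighten: in Case (a) you should also restrict each $\al^{(m)}$ to $\ran x_{k_m}$ (exactly as you do in Case (b)) and choose $k_{m+1}$ beyond $\supp\al^{(m)}$, since the definition of a very fast growing sequence requires the averages to have successive supports, not merely increasing sizes.
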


\begin{proof}
It is easy to prove that (i) follows from (ii), therefore we shall
only prove the inverse. Suppose that (i) is true and (ii) is not.

Then there exists $\e>0$ such that for any $j_0\inn$ there exists
$j\geqslant j_0$, such that for any $k_0\inn$, there exists
$k\geqslant k_0$ and $\{\al_q\}_{q=1}^d$ a
$\mathcal{S}_j$-admissible and very fast growing sequence of
$\al$-averages with $s(\al_q) > j_0$, for $q=1,\ldots,d$, such
that $\sum_{q=1}^d|\al_q(x_k)| \geqslant \e$.

We will inductively choose a subsequence $\{x_{n_i}\}_{i\inn}$ and
$\{\al^i\}_{i\inn}$ a very fast growing sequence of
$\al$-averages, such that $|\al^i(x_{n_i})| > \frac{\e}{2}$, for
any $i$. This evidently yields a contradiction.

For $j_0 = 1$, there exists $j_1\geqslant 1$, such that there
exists a subsequence $\{x_{k_j}\}_{j\inn}$ of $\{x_k\}_{k\inn}$, a
sequence $\{\al_q\}_{q\inn}$ of $\al$-averages with $s(\al_q) > 1$
for all $q\inn$ and $\{F_j\}_{j\inn}$ a sequence of increasing
intervals of the naturals, such that:
\begin{enumerate}

\item[(i)] $\{\al_q\}_{q\in F_j}$ is very fast growing and
$\mathcal{S}_{j_1}$-admissible.

\item[(ii)] $\sum_{q\in F_j}|\al_q(x_{k_j})|\geqslant \e$.

\item[(iii)] If $F_j^\prime = F_j\setminus\{\min F_j\}$, then
$\{\al_q\}_{q\in\cup_j F_j^\prime}$ is very fast growing.
\end{enumerate}
Since $\adx = 0$, we have that $\lim_j\sum_{q\in
F_j^\prime}|\al_q(x_{k_j})| = 0$. Choose $j$ such that $|\al_{\min
F_j}(x_{k_j})| >\frac{\e}{2}$ and set $n_1 = k_j, \al^1 =
\al_{\min F_j}$.

Suppose that we have chosen $n_1 <\cdots <n_p$ and
$\{a^i\}_{i=1}^p$ a very fast growing sequence of $\al$-averages,
such that $|\al^i(x_{n_i})| > \frac{\e}{2}$, for $i=1,\ldots,p$.

Set $j_0 = \max\{s(\al^p),\;2^{\max\supp\al^p}\}$ and repeat the
first inductive step to find an $\al$-average $\al$ with
$s(\al)>j_0$ and $x_k>x_{n_p}$, $x_k > \al^p$, such that
$|\al(x_k)|\geqslant\frac{\e}{2}$. Set $x_{n_{p+1}} = x_k$ and
$\al^{p+1} = \al|_{\ran x_k}$. The inductive construction is
complete and so is the proof.
\end{proof}

The proof of the next proposition is identical to the proof of the
previous one.

\begin{prp} Let $\{x_k\}_{k\inn}$ be a block sequence in $\X$. Then the
following assertions are equivalent.
\begin{enumerate}

\item[(i)] $\bdx = 0$

\item[(ii)] For any $\e>0$ there exists $j_0\inn$ such that for
any $j\geqslant j_0$ there exists $k_j\inn$ such that for any
$k\geqslant k_j$, and for any $\{\be_q\}_{q=1}^d$
$\mathcal{S}_j$-admissible and very fast growing sequence of
$\be$-averages such that $s(\be_q) > j_0$, for $q=1,\ldots,d$, we
have that $\sum_{q=1}^d|\be_q(x_k)| < \e$.

\end{enumerate}\label{prop3.4}
\end{prp}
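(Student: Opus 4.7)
The plan is to mirror the proof of Proposition \ref{prop3.3} verbatim, with $\be$-averages replacing $\al$-averages throughout. The implication (ii) $\Rightarrow$ (i) is essentially trivial: given $n$, a very fast growing $\Sn$-admissible sequence $\{\be_q\}_{q\in F_k}$ of $\be$-averages, and $\e > 0$, one picks $j_0$ from (ii) applied to $\e$, passes to the tail where all $s(\be_q) > j_0$ and all $q \geqslant$ some index (using that $\{\be_q\}$ is very fast growing so $s(\be_q) \to \infty$), and applies the bound in (ii) with $j = n$.

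For the harder direction (i) $\Rightarrow$ (ii), I would argue by contradiction. Suppose $\bdx = 0$ but (ii) fails for some $\e > 0$. The plan is to inductively build a subsequence $\{x_{n_i}\}_{i\inn}$ together with a very fast growing sequence $\{\be^i\}_{i\inn}$ of $\be$-averages such that $|\be^i(x_{n_i})| > \e/2$ for every $i$. Setting $F_k = \{k\}$ (which is trivially $\mathcal{S}_0$-admissible, hence $\Sn$-admissible for every $n$) then contradicts $\bdx = 0$, because $\sum_{q \in F_k}|\be_q(x_{n_k})| = |\be^k(x_{n_k})| > \e/2$ does not tend to zero.

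The inductive step is the crux, and is the same extraction trick as in the $\al$ case. Having fixed $n_1 < \cdots < n_p$ and $\be^1 < \cdots < \be^p$ very fast growing with $|\be^i(x_{n_i})| > \e/2$, I set $j_0 = \max\{s(\be^p),\, 2^{\max\supp \be^p}\}$. The negation of (ii) applied with this $j_0$ produces some $j \geqslant j_0$ such that for arbitrarily large $k$ there is a $\mathcal{S}_j$-admissible very fast growing sequence $\{\be_q\}_{q=1}^d$ of $\be$-averages with each $s(\be_q) > j_0$ and $\sum_{q=1}^d |\be_q(x_k)| \geqslant \e$. Doing this for infinitely many $k$ (past $n_p$ and past $\max\supp \be^p$) and organizing them into a single very fast growing sequence indexed by intervals $F_k$, the hypothesis $\bdx = 0$ forces the contribution of $F_k \setminus \{\min F_k\}$ to vanish, so the first term $\be_{\min F_k}$ must carry mass $> \e/2$ for some suitable $k$. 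Take this as $\be^{p+1}$ restricted to $\ran x_{n_{p+1}}$, where $n_{p+1} = k$. By construction $s(\be^{p+1}) > j_0 \geqslant s(\be^p)$ and $s(\be^{p+1}) > j_0 \geqslant 2^{\max\supp \be^p}$, so the very fast growing condition is preserved.

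The main obstacle, as in Proposition \ref{prop3.3}, is arranging the chosen $\be$-averages into a single very fast growing sequence while keeping each $\be^i$ supported near $x_{n_i}$; this is exactly what the choice of $j_0 = \max\{s(\be^p), 2^{\max\supp \be^p}\}$ at each stage accomplishes, and why one must also carefully separate the first element $\be_{\min F_k}$ of each admissible block from its tail (so that the vanishing of the tail, guaranteed by $\bdx = 0$, does not swallow the mass that produces the contradiction). No ingredient specific to $\al$-averages is used, so the proof carries over verbatim.
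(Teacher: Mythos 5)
Your proposal is correct and coincides with the paper's treatment: the paper simply states that the proof of this proposition is identical to that of Proposition \ref{prop3.3}, and your argument reproduces that proof faithfully, including the key device of isolating $\be_{\min F_k}$ from the tail $F_k\setminus\{\min F_k\}$ so that the index-zero hypothesis kills the tail and leaves mass $>\e/2$ on a single average, and the choice $j_0=\max\{s(\be^p),2^{\max\supp\be^p}\}$ that keeps the extracted sequence very fast growing. Nothing further is needed.
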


\begin{prp}
Let $\{x_k\}_{k\inn}$ be a seminormalized block sequence in $\X$,
such that either $\adx > 0$, or $\bdx > 0$.

Then there exists
$\theta>0$ and a subsequence $\{x_{n_k}\}_{k\inn}$ of
$\{x_k\}_{k\inn}$, that generates an $\ell_1^n$ spreading model,
with a lower constant $\frac{\theta}{2^n}$, for all $n\inn$.

More precisely, for every $n\inn$ and $F\subset\mathbb{N}$ with $\{\min\supp x_{n_k}: k\in F\}\in\mathcal{S}_n$ and $\{c_k\}_k\subset\mathbb{R}$, we have that $\|\sum_{k\in F}c_kx_{n_k}\| \geqslant \frac{\theta}{2^n}\sum_{k\in F}|c_k|$.

In particular, for any $k_0,n\inn$, there exists $F$ a
finite subset of $\mathbb{N}$ with $\min F\geqslant k_0$ and
$\{c_k\}_{k\in F}$, such that $x = 2^n\sum_{k\in F}c_kx_{n_k}$ is a
$(C,\theta,n)$ vector, where $C = \sup\{\|x_k\|: k\inn\}$.

If moreover $\{x_k\}_k$ is $(C^\prime,\{n_k\}_k)$ $\al$-RIS, then $x$ can be chosen to be a $(C^{\prime\prime},\theta,n)$ exact vector, where $C^{\prime\prime} = \max\{C,C^\prime\}$.
\label{prop3.5}
\end{prp}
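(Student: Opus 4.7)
The strategy is to turn positivity of $\adx$ or $\bdx$ into explicit type I$_\al$ (or I$_\be$) functionals in $W$ that witness the $\ell_1^n$ lower bound. Assume $\adx>0$; the case $\bdx>0$ is treated identically after replacing $\al$-averages by $\be$-averages and type I$_\al$ by type I$_\be$ functionals. By the very definition of positive $\al$-index one extracts a single $n_0\inn$, a $\theta>0$, a globally very fast growing sequence $\{\al_q\}_{q\inn}$ of $\al$-averages in $W$, successive sets $F_1<F_2<\cdots$ with $\{\al_q\}_{q\in F_k}$ $\mathcal{S}_{n_0}$-admissible, and a subsequence $\{x_{m_k}\}_k$ of $\{x_k\}_k$ with $\sum_{q\in F_k}|\al_q(x_{m_k})|\geqslant\theta$ for every $k$ (after passing to a further subsequence). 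Replacing each $\al_q$ ($q\in F_k$) by its restriction $\al_q^k:=\al_q|_{\ran x_{m_k}}$ gives $\al$-averages of the same size in $W$, still $\mathcal{S}_{n_0}$-admissible and very fast growing inside $F_k$, with $\al_q^k(x_{m_k})=\al_q(x_{m_k})$, $\supp\al_q^k\subset\ran x_{m_k}$, and (because $\{x_{m_k}\}$ is block and $\{\al_q\}$ was globally very fast growing) with the concatenation $\{\al_q^k\}_{k,\,q\in F_k}$ still very fast growing. The main obstacle is Step 1: to extract the admissibility parameter $n_0$ \emph{uniformly in the target $n$}, so that the resulting weight in the next step depends only on $n+n_0$. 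This uniformity is obtained by using the original definition of the index rather than the flexible characterisation of Proposition \ref{prop3.3}.

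\textbf{From the witnesses to the $\ell_1^n$ lower bound.} Fix $n\inn$, $F\subset\mathbb{N}$ with $\{\min\supp x_{m_k}:k\in F\}\in\Sn$, and scalars $\{c_k\}_{k\in F}$. Setting $A_k:=\{\min\supp\al_q^k:q\in F_k\}\in\mathcal{S}_{n_0}$, the sets $A_k$ are successive and $\{\min A_k:k\in F\}$ is a spread of $\{\min\supp x_{m_k}:k\in F\}$, hence in $\Sn$. The Schreier convolution $\Sn*\mathcal{S}_{n_0}=\mathcal{S}_{n+n_0}$ therefore yields $\bigcup_{k\in F}A_k\in\mathcal{S}_{n+n_0}$, so $\{\al_q^k\}_{k\in F,\,q\in F_k}$ is $\mathcal{S}_{n+n_0}$-admissible and very fast growing, and
\[
f\;=\;\frac{1}{2^{n+n_0}}\sum_{k\in F}\sgn(c_k)\sum_{q\in F_k}\sgn(\al_q^k(x_{m_k}))\,\al_q^k
\]
is a type I$_\al$ functional in $W$ of weight $n+n_0$. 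Because $\supp\al_q^k\subset\ran x_{m_k}$, only the diagonal contributes, and
\[
\Bigl\|\sum_{k\in F}c_kx_{m_k}\Bigr\|\;\geqslant\;f\Bigl(\sum_{k\in F}c_kx_{m_k}\Bigr)\;=\;\frac{1}{2^{n+n_0}}\sum_{k\in F}|c_k|\sum_{q\in F_k}|\al_q^k(x_{m_k})|\;\geqslant\;\frac{\theta}{2^{n+n_0}}\sum_{k\in F}|c_k|.
\]
Relabeling $\theta/2^{n_0}$ as $\theta$ gives the claimed $\ell_1^n$ lower constant $\theta/2^n$.

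\textbf{$(C,\theta,n)$ and exact vectors.} Given $k_0,n$, pick $0<\e<1/(36C2^{3n})$ and take $k_1\geqslant k_0$ so large that $\min\supp x_{m_{k_1}}\geqslant 8C2^{2n}$. Applying Proposition \ref{prop1.5} to $M=\{\min\supp x_{m_k}:k\geqslant k_1\}$ furnishes $F$ with $\min F\geqslant k_1$ and nonnegative $\{c_k\}_{k\in F}$ such that $\sum_{k\in F}c_kx_{m_k}$ is an $(n,\e)$ s.c.c.; in particular $\{\min\supp x_{m_k}:k\in F\}\in\Sn$, so the previous step gives $\|\sum c_kx_{m_k}\|\geqslant\theta/2^n$, and $x:=2^n\sum_{k\in F}c_kx_{m_k}$ is a $(C,\theta,n)$ vector. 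If moreover $\{x_k\}$ is $(C',\{n_k\}_k)$ $\al$-RIS, enlarge $k_1$ once more so that $n_{m_{k_1}}>2^{2n}$; the subsequence $\{x_{m_k}\}_{k\in F}$ is then $(C'',\{n_{m_k}\}_{k\in F})$ $\al$-RIS with $C''=\max\{C,C'\}$, and $x$ is a $(C'',\theta,n)$ exact vector.
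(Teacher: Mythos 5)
Your proposal is correct and follows essentially the same route as the paper's proof: extract from the positive index a fixed admissibility parameter and a very fast growing family of averages witnessing $\theta$ on a subsequence, restrict their ranges, concatenate them via $\Sn*\mathcal{S}_{n_0}=\mathcal{S}_{n+n_0}$ into a type I$_\al$ (or I$_\be$) functional of weight $n+n_0$ to get the $\ell_1^n$ lower bound with constant $\theta/2^n$, and then combine this with Proposition \ref{prop1.5} to produce the $(C,\theta,n)$ (exact) vectors. The only cosmetic difference is that you absorb the factor $2^{-n_0}$ into $\theta$ at the end, whereas the paper sets $\theta=\e/2^\ell$ at the outset.
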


\begin{proof}
Assume that $\adx > 0$. Then there exist $\ell\inn, \e>0,
\{\al_q\}_{q\inn}$ a very fast growing sequence of $\al$-averages,
$\{F_k\}_{k\inn}$ increasing subsets of the naturals such that
$\{\al_q\}_{q\in F_k}$ is $\mathcal{S}_\ell$-admissible for all
$k\inn$ and $\{x_{n_k}\}_{k\inn}$ a subsequence of
$\{x_k\}_{k\inn}$, such that $\sum_{q\in F_k}|\al_q(x_{n_k})| >
\e$, for all $k\inn$. Pass, if necessary ,to a subsequence, again
denoted by $\{x_{n_k}\}_{k\inn}$, generating some spreading model.

By changing the signs and restricting the ranges of the $\al_q$,
we may assume that $\sum_{q\in F_k}\al_q(x_{n_k}) > \e$, for all
$k\inn$ and $\ran \al_q\subset \ran x_{n_k}$ for any $q\in F_k$
and $k\inn$. Set $\theta = \frac{\e}{2^\ell}$.

Let $k_0,n\inn$ and choose $0<\eta<\frac{1}{36C2^{3n}}$. By Proposition \ref{prop1.5} there exists
$F$ a finite subset of $\{n_k: k\geqslant \max\{k_0, 8C2^{2n}\}\}$ and
$\{c_k\}_{k\in F}$, such that $x^\prime = \sum_{k\in F}c_kx_{n_k}$ is a
$(n,\eta)$ s.c.c.

Set $f = \frac{1}{2^{\ell + n}}\sum_{k\in F}\sum_{q\in
F_{n_k}}\al_q$. Then $f$ is a functional of type I$_\al$ in $W$
and $f(x^\prime) > \frac{\e}{2^{\ell+n}} = \frac{\theta}{2^n}$. Therefore $x = 2^nx^\prime$ is the desired $(C,\theta,n)$ vector.

If moreover $\{x_k\}_k$ is $(C^\prime,\{n_k\}_k)$ $\al$-RIS, obvious modifications yield that $x$ can be chosen to be a $(C^{\prime\prime},\theta,n)$ exact vector.

Arguing in the same way, for any $n\inn$, for any $F\subset\mathbb{N}$ with $\{\min\supp x_{n_k}: k\in F\}\in\mathcal{S}_n$ and $\{c_k\}_k\subset\mathbb{R}$, we have that
$\|\sum_{k\in F}c_kx_{n_k}\| > \frac{\theta}{2^n}\sum_{k\in F}|c_k|$.

The proof is exactly the same if $\bdx > 0$.

\end{proof}

\subsection*{Block sequences with $\al$-index zero}
In this subsection we show that sequences
$\{x_k\}_{k\inn}$ with $x_k$ a $(C,\theta,n_k)$ vector, with $\{n_k\}_k$ strictly increasing
have $\al$-index zero. Also also prove that sequences with $\al$-index zero have $\al$-RIS subsequences.

\begin{prp}
Let $\{x_k\}_k$ be a bounded block sequence in $\X$ with $\adx = 0$. Then it has a subsequence that is $(2C,\{n_k\}_k)$ $\al$-RIS, where $C = \sup\{\|x_k\|: k\inn\}$.\label{prpalzeroalris}
\end{prp}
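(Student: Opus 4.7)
The plan is to apply Proposition~\ref{prop3.3}\,(ii) to the hypothesis $\adx = 0$ with $\e = C$, which yields an index $j_0\inn$ and, for every $j\geq j_0$, a threshold $k_j$ such that for each $k\geq k_j$ the inequality $\sum_q |\al_q(x_k)| < C$ holds uniformly over every $\mathcal{S}_j$-admissible very fast growing sequence of $\al$-averages whose sizes all exceed $j_0$. I then build the subsequence $\{x_{m_k}\}_k$ and the strictly increasing weights $\{n_k\}_k$ alternately. Starting with $n_1=1$, so that Definition~\ref{def3.10}\,(i) is vacuous at $k=1$, at stage $k$ the weight $n_k$ is fixed and I choose $m_k>m_{k-1}$ large enough that $\min\supp x_{m_k}\geq\log_2 j_0$ and $m_k\geq k_{j'}$ for every $j'$ with $j_0\leq j'\leq \max(j_0,n_k-1)$; then I pick $n_{k+1}>n_k$ so that $\max\supp x_{m_k}/2^{n_{k+1}}<1/2^{n_k}$, handling Definition~\ref{def3.10}\,(ii).

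The main content is then to verify Definition~\ref{def3.10}\,(i) at the doubled constant $2C$. Fix $k$, let $f = \pm I\cdot\frac{1}{2^j}\sum_{q=1}^d \al_q$ be a type I$_\al$ functional of weight $j<n_k$, and set $Q=\{q:\ran\al_q\cap\ran x_{m_k}\neq\varnothing\}$. The case $|Q|\leq 1$ is immediate, since then $|f(x_{m_k})|\leq C/2^j<2C/2^j$. When $|Q|\geq 2$, with $q_1=\min Q$, split
\begin{equation*}
|f(x_{m_k})|\leq \frac{1}{2^j}\Bigl(|\al_{q_1}(x_{m_k})| + \sum_{q\in Q,\ q>q_1}|\al_q(x_{m_k})|\Bigr).
\end{equation*}
The first summand is trivially at most $C$. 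The tail $\{\al_q\}_{q\in Q,\ q>q_1}$ is a subsequence of a very fast growing sequence, hence itself very fast growing, and is $\mathcal{S}_j$-admissible as a subset. Moreover, for every such $q$, block ordering gives $\max\supp\al_{q-1}\geq\max\supp\al_{q_1}\geq\min\supp x_{m_k}\geq\log_2 j_0$, so the very fast growing property forces $s(\al_q)>2^{\max\supp\al_{q-1}}\geq j_0$. Upgrading $\mathcal{S}_j$ to $\mathcal{S}_{j'}$ with $j'=\max(j,j_0)$, Proposition~\ref{prop3.3}\,(ii) applied at $j'$, together with $m_k\geq k_{j'}$, yields the strict bound $\sum_{q\in Q,\ q>q_1}|\al_q(x_{m_k})|<C$. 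Summing and dividing by $2^j$ gives $|f(x_{m_k})|<2C/2^j$, as required.

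The main obstacle, and the precise reason why the constant must be doubled from $C$ to $2C$, is that the very fast growing property of $\{\al_q\}_{q=1}^d$ places no restriction on $s(\al_{q_1})$, the size of the first $\al$-average meeting $\ran x_{m_k}$. Consequently $\al_{q_1}$ cannot be fed into Proposition~\ref{prop3.3}\,(ii) and must be absorbed separately by the crude norm bound $|\al_{q_1}(x_{m_k})|\leq C$, which accounts for exactly the missing $C/2^j$. The low-weight case $j<j_0$ needs no separate treatment, since the nestedness $\mathcal{S}_j\subset\mathcal{S}_{j_0}$ permits invoking Proposition~\ref{prop3.3}\,(ii) with $j'=j_0$ for every admissible weight.
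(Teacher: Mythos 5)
Your proof is correct and follows essentially the same route as the paper: both apply Proposition \ref{prop3.3}(ii) with $\e = C$, isolate the first $\al$-average whose range meets the vector (whose size is uncontrolled) and absorb it by the crude bound $C$, and control the remaining tail by $C$ via the proposition, which is exactly where the constant doubles. The only difference is presentational: the paper extracts the subsequence by appealing to Remark \ref{rem3.11}, whereas you build $\{x_{m_k}\}$ and $\{n_k\}$ explicitly (and in doing so you are in fact more careful than the paper about using the threshold $k_{j'}$ for the correct index $j'$ rather than $k_{j_0}$).
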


\begin{proof}
Applying Proposition \ref{prop3.3} we have the following. There exists $j_0\inn$ such that for every $j\geqslant j_0$ there exists $k_j\inn$ such that for every $k\geqslant k_j$ and $\{\al_q\}_{q=1}^d$ very fast growing and $\mathcal{S}_j$ admissible sequence of $\al$-averages with $s(\al_q)>j_0$ for $q=1,\ldots,d$, we have that $\sum_{q=1}^d|\al_q(x_k)| < C$.

We shall show that for every $j\geqslant j_0$ and $k_0\inn$, there exists $k\geqslant k_0$ such that for every $f\in W$ of type I$_\al$ and $w(f) = n < j$, we have that $|f(x_k)| < \frac{2C}{2^n}$. If this is shown to be true, then by Remark \ref{rem3.11} we are done.

Fix $j\geqslant j_0$ and $k_0\inn$. Set $k = \max\{j_0, k_0, k_{j_0}\}$ and let $f\in W$ with $w(f) = n < j$. Then $f$ is of the form $f = \frac{1}{2^n}\sum_{q=1}^d\al_q$, where $\{\al_q\}_{q=1}^d$ is a very fast growing and $\mathcal{S}_n$ admissible sequence of $\al$-averages. We may clearly assume that $\ran \al_1\cap \ran x_k\neq\varnothing$. Then $\{\al_q\}_{q=2}^d$ is very fast growing with $s(\al_q) > \min\supp x_k \geqslant j_0$ and it is $S_j$ admissible, as it is $\mathcal{S}_n$ admissible and $n<j$. We conclude the following.

\begin{equation*}
|f(x_k)| \leqslant \frac{1}{2^n}(|\al_1(x_k)| + \sum_{q=2}^d|\al_q(x_k)|) <\frac{1}{2^n}(C + C)
\end{equation*}

\end{proof}

\begin{lem} Let $x = 2^n\sum_{k=1}^mc_kx_k$ be a $(C,\theta,n)$ vector in $\X$. Let also $\al$ be
an $\al$-average in $W$ and set $G_\al = \{k: \ran\al\cap\ran
x_k\neq\varnothing\}$. Then the following holds.
\begin{equation*}
|\al(x)| < \min\big\{\frac{C2^n}{s(\al)}\sum_{k\in G_\al}c_k,\;
\frac{6C}{s(\al)}\sum_{k\in G_\al}c_k + \frac{1}{3\cdot2^{2n}}\big\} + 2C2^n\max\{c_k:
k\in G_\al\}
\end{equation*}\label{lem3.6}
\end{lem}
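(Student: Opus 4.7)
The plan is to decompose $\al = \tfrac{1}{s(\al)}\sum_{j=1}^d f_j$ with $d \leq s(\al)$ and $f_1 < \cdots < f_d$ in $W$, which yields
\begin{equation*}
|\al(x)| \;\leq\; \frac{2^n}{s(\al)}\sum_{j=1}^d \Bigl|f_j\Bigl(\sum_{k\in K_j} c_k x_k\Bigr)\Bigr|,
\end{equation*}
where $K_j = \{k : \ran f_j \cap \ran x_k \neq \varnothing\}$. Because $\{x_k\}$ is a block sequence, each $K_j$ is an interval of consecutive indices, and the combinatorial heart of the argument is the claim that for any $k$ strictly between $\min K_j$ and $\max K_j$ one has $\ran x_k \subset \ran f_j$: the memberships $k-1, k+1 \in K_j$ force $\max\supp x_{k-1} \geq \min\supp f_j$ and $\min\supp x_{k+1} \leq \max\supp f_j$, and the successiveness of $\{x_k\}$ then pinches $\ran x_k$ inside $\ran f_j$. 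Splitting $K_j = K_j^{int} \sqcup K_j^{bd}$ with $K_j^{int} = \{k \in K_j : \ran x_k \subset \ran f_j\}$, this gives $|K_j^{bd}| \leq 2$.

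For the boundary part I would use the crude estimate $|f_j(\sum_{k \in K_j^{bd}} c_k x_k)| \leq 2C \max\{c_k : k \in G_\al\}$. Summing over $j$, using $d \leq s(\al)$, and multiplying by the prefactor $\tfrac{2^n}{s(\al)}$, the total boundary contribution is at most $2C \cdot 2^n \max\{c_k : k \in G_\al\}$, which is precisely the last summand of the lemma.

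The interior part I would bound two ways, to produce the two alternatives inside the $\min$. The crude one uses $|f_j(x_k)| \leq \|x_k\| \leq C$ termwise, giving $|f_j(\sum_{K_j^{int}} c_k x_k)| \leq C \sum_{K_j^{int}} c_k$; since the $\ran f_j$ are disjoint, the sets $K_j^{int}$ are pairwise disjoint across $j$, so summing yields $C \sum_{k \in G_\al} c_k$, and the prefactor then produces $\tfrac{C 2^n}{s(\al)} \sum c_k$. The refined one applies Corollary \ref{cor2.21} to the partial $(n,\e)$-s.c.c.\ $\sum_{K_j^{int}} c_k x_k$ (after rescaling by $C$, using that restrictions of an $(n,\e)$-s.c.c.\ satisfy the same estimate), yielding $|f_j(\sum_{K_j^{int}} c_k x_k)| \leq \tfrac{6C}{2^n} \sum_{K_j^{int}} c_k + 12 C \e$; summing over $j$, applying the prefactor, using $d \leq s(\al)$, and invoking the hypothesis $\e < \tfrac{1}{36 C 2^{3n}}$ so that $12 C 2^n \e < \tfrac{1}{3 \cdot 2^{2n}}$, gives $\tfrac{6C}{s(\al)} \sum c_k + \tfrac{1}{3 \cdot 2^{2n}}$. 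Taking the minimum of the two interior estimates and adding the boundary contribution produces the stated inequality.

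The hard part will be nailing down the combinatorial claim $|K_j^{bd}| \leq 2$ and organising the two parallel estimates cleanly; after that everything reduces to bookkeeping with the prefactor $\tfrac{2^n}{s(\al)}$, the restriction $d \leq s(\al)$, and the careful application of Corollary \ref{cor2.21}.
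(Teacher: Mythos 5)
Your proposal is correct and follows essentially the same strategy as the paper's proof: decompose $\al$ into its constituent functionals $f_j$, isolate the at most two vectors per $f_j$ that straddle its range (giving the $2C2^n\max\{c_k\}$ term), and bound the fully-contained part both trivially by $\|x_k\|\leqslant C$ and via Corollary \ref{cor2.21} with the hypothesis $\e<\frac{1}{36C2^{3n}}$ to obtain the two branches of the minimum. The only cosmetic difference is that the paper partitions the indices $k$ globally into those met by at most one versus at least two of the $f_j$, whereas you partition per functional by range containment; the resulting estimates are identical.
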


\begin{proof}
If $\al = \frac{1}{p}\sum_{j=1}^df_j$. Set
\begin{eqnarray*}
E_1 &=& \{k\in G_\al:\;\text{there exists at most one}\;j\;\text{with}\;\ran f_j\cap\ran x_k\neq\varnothing\}\\
E_2 &=& \{1,\ldots,m\}\setminus E_1\\
J_k &=& \{j: \ran f_j\cap\ran
x_k\neq\varnothing\}\quad\text{for}\;k\in E_2.
\end{eqnarray*}
Then it is easy to see that
\begin{equation}
|\al(\sum_{k\in E_1}c_kx_k)| \leqslant \frac{C}{p}\sum_{k\in
G_\al}c_k \label{lem3.6eq1}
\end{equation}
Moreover
\begin{equation}
|\al(\sum_{k\in E_2}c_kx_k)| < 2C\max\{c_k: k\in
G_\al\}\label{lem3.6eq2}
\end{equation}
To see this, notice that
\begin{equation*}
|\al(\sum_{k\in E_2}c_kx_k)| \leqslant \frac{1}{p}\sum_{k\in
E_2}c_k\big(\sum_{j\in J_k}|f_j(x_k)|\big) < \max\{c_k: k\in
G_\al\}\frac{2Cp}{p}
\end{equation*}

Set $J = \{j:$ there exists $k\in E_1$ such that $\ran f_j\cap\ran
x_k\neq\varnothing\}$ and for $j\in J$ set $G_j = \{k\in E_1: \ran
f_j\cap\ran x_k\neq\varnothing\}$. Then the $G_j$ are pairwise
disjoint and $\cup_{j\in J}G_j = E_1$.

For $j\in J$, Corollary \ref{cor2.21} yields that
\begin{equation*}
|f_j(\sum_{k\in
G_j}c_kx_k)| \leqslant \frac{6C}{2^n}\sum_{k\in G_j}c_k + \frac{1}{3\cdot2^{3n}}
\end{equation*}
Therefore
\begin{equation}
|\al(\sum_{k\in E_1}c_kx_k)| \leqslant \frac{1}{p}\sum_{j\in
J}|f_j(\sum_{k\in G_j}c_kx_k)| \leqslant \frac{6C}{2^np}\sum_{k\in
G_\al}c_k + \frac{1}{3\cdot2^{3n}}\label{lem3.6eq3}
\end{equation}
Then \eqref{lem3.6eq1} and \eqref{lem3.6eq3} yield the following.
\begin{equation}
|\al(\sum_{k\in E_1}c_kx_k)| \leqslant
\min\big\{\frac{C}{s(\al)}\sum_{k\in G_\al}c_k,\;
\frac{6C}{2^ns(\al)}\sum_{k\in G_\al}c_k +
\frac{1}{3\cdot2^{3n}}\big\}\label{lem3.6eq4}
\end{equation}
By summing up \eqref{lem3.6eq2} and \eqref{lem3.6eq4} the result
follows.

\end{proof}

\begin{lem} Let $x$ be a $(C,\theta,n)$ vector in $\X$. Let also
$\{a_q\}_{q=1}^d$ be a very fast growing and
$\mathcal{S}_j$-admissible sequence of $\al$-averages, with $j<n$.
Then the following holds.
\begin{equation*}
\sum_{q=1}^d|\al_q(x)| < \frac{6C}{s(\al_1)} +
\frac{1}{2^n}
\end{equation*}\label{lem3.7}
\end{lem}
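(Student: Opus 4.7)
Write $x=2^n\sum_{k=1}^m c_kx_k$, where $\e<\tfrac{1}{36C2^{3n}}$ and $\sum c_ke_{\psi(k)}$ is an $(n,\e)$ basic s.c.c.; in particular $c_k<\e$ for each $k$ (take $G=\{\psi(k)\}\in\mathcal{S}_0\subseteq\mathcal{S}_{n-1}$). Discard the $\al_q$ not meeting $\ran x$; then $\max\supp\al_1\geqslant\min\supp x_1\geqslant 8C2^{2n}$, and the very fast growing condition gives $s(\al_q)>2^{\max\supp\al_{q-1}}\geqslant 2^{8C2^{2n}}$ for every $q\geqslant 2$. The strategy is to apply Lemma \ref{lem3.6} to each $\al_q$ separately, using the ``$6C$'' choice inside the minimum for $\al_1$ (to recover $\tfrac{6C}{s(\al_1)}$) and the ``$C2^n$'' choice for $q\geqslant 2$ (which is negligible because $s(\al_q)$ is astronomically large).

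For $\al_1$, Lemma \ref{lem3.6} gives $|\al_1(x)|<\tfrac{6C}{s(\al_1)}+\tfrac{1}{3\cdot 2^{2n}}+2C2^n\max_{G_{\al_1}}c_k$; the last summand is less than $\tfrac{1}{18\cdot 2^{2n}}$ since $c_k<\e$. For $q\geqslant 2$, Lemma \ref{lem3.6} gives $|\al_q(x)|<\tfrac{C2^n}{s(\al_q)}\sum_{k\in G_{\al_q}}c_k+2C2^n\max_{G_{\al_q}}c_k$. Since $\{q:k\in G_{\al_q}\}$ is an interval of $q$'s (by successivity of the $\al_q$) and $1/s(\al_q)$ decays super-geometrically, exchanging the order of summation yields $\sum_{q\geqslant 2}\tfrac{C2^n}{s(\al_q)}\sum_{k\in G_{\al_q}}c_k\leqslant\tfrac{2C2^n}{s(\al_2)}\leqslant\tfrac{2C2^n}{2^{8C2^{2n}}}$, which is astronomically smaller than $\tfrac{1}{2^n}$.

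The delicate point is the total boundary contribution $\sum_{q=1}^d 2C2^n\max\{c_k:k\in G_{\al_q}\}$. For each $q$ split $G_{\al_q}$ into the (at most one) left straddler $k$ with $\psi(k)<\min\supp\al_q\leqslant\phi(k)$ and the regular part $R_q=\{k\in G_{\al_q}:\psi(k)\geqslant\min\supp\al_q\}$, and pick $k_q^r\in R_q$ maximising $c_k$ on $R_q$. The $k_q^r$ are pairwise distinct, since $k\in R_q\cap R_{q'}$ with $q<q'$ would force $\psi(k)\geqslant\min\supp\al_{q'}>\max\supp\al_q$, contradicting $k\in G_{\al_q}$. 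Enumerated by $q$, the $\psi(k_q^r)$ are increasing and satisfy $\psi(k_q^r)\geqslant\min\supp\al_q$, so $\{\psi(k_q^r)\}$ is a spreading of a subset of the $\mathcal{S}_j$-set $\{\min\supp\al_q\}$; as Schreier families are spreading, $\{\psi(k_q^r)\}\in\mathcal{S}_j\subseteq\mathcal{S}_{n-1}$, and the $(n,\e)$ s.c.c.\ property forces $\sum_q c_{k_q^r}<\e$. A parallel spreading argument on the $\phi$-side --- exploiting that, by the proof of Corollary \ref{cor2.21}, $\sum c_ke_{\phi(k)}$ is itself a $(n,2\e)$ basic s.c.c., and that the $\phi$-values of distinct straddlers form a spread of a subset of $\{\min\supp\al_q\}$ --- controls the straddler contribution by $O(\e)$. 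Multiplying by $2C2^n$ and using $\e<\tfrac{1}{36C2^{3n}}$ keeps the boundary sum well below $\tfrac{1}{2^n}$, and assembling everything yields $\sum_q|\al_q(x)|<\tfrac{6C}{s(\al_1)}+\tfrac{1}{2^n}$. The hardest part is precisely the straddler bookkeeping, since a single $x_k$ of wide support can straddle many consecutive $\al_q$; the spread structure of $\mathcal{S}_j$ together with the $(n,\e)$ s.c.c.\ property on both $\psi$- and $\phi$-sides is exactly what keeps the count under control.
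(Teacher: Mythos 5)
Your overall scheme --- apply Lemma \ref{lem3.6} to each $\al_q$ separately, keep the $\frac{6C}{s(\al_1)}$ branch for $q=1$ and the $\frac{C2^n}{s(\al_q)}$ branch for $q\geqslant 2$, and control the boundary terms $2C2^n\max\{c_k: k\in G_{\al_q}\}$ by spreading the maximizers into an $\mathcal{S}_j\subseteq\mathcal{S}_{n-1}$ set --- is the right one, and your treatment of the main terms and of the regular maximizers $k_q^r$ is sound. But the straddler bookkeeping, which you yourself flag as the hardest part, has a genuine gap: the quantity you must bound is $\sum_q c_{s(q)}$, where $s(q)$ is the straddler of $\al_q$, and this counts each distinct straddler $k$ with multiplicity $m_k=\#\{q: s(q)=k\}$. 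Your spreading argument on the $\phi$-side only shows $\sum_k c_k = O(\e)$ over \emph{distinct} straddlers; it gives no control on $\sum_k m_kc_k$. And $m_k$ can be huge: if $\ran x_k$ contains $\ran\al_q$ for all $q$ in a long interval $Q$, then $G_{\al_q}=\{k\}$ and $R_q=\varnothing$ for every $q\in Q$, so $x_k$ is charged once per $q\in Q$, for a total of $\#Q\cdot 2C2^nc_k$. Since $\mathcal{S}_j$-admissibility with $\min\supp\al_1\geqslant 8C2^{2n}$ permits $\#Q$ far larger than $2^{2n}$, this swamps $\frac{1}{2^n}$ even with $c_k<\e$. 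Moreover the term-by-term estimate is essentially sharp for such $q$ (it is just $|\al_q(2^nc_kx_k)|\leqslant C2^nc_k$), so no sharpening of Lemma \ref{lem3.6} can rescue a purely term-by-term summation.

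The paper closes exactly this hole by isolating $J_1=\{q>1: \al_q \text{ meets at most one } x_k\}$ and \emph{regrouping} rather than summing: $\{\al_q\}_{q\in J_1}$ is still very fast growing and $\mathcal{S}_j$-admissible, so (after adjusting signs) $\frac{1}{2^j}\sum_{q\in J_1}\al_q$ is a type I$_\al$ functional, whence $\sum_{q\in J_1}|\al_q(x)|\leqslant 2^j\cdot 2^nC\sum_{k\in G_1}c_k$ with $G_1$ the set of blocks met by some $\al_q$, $q\in J_1$. The arbitrary multiplicity is absorbed into the single factor $2^j\leqslant 2^{n-1}$, and the smallness of the total coefficient mass $\sum_{k\in G_1}c_k<\frac{1}{18C2^{3n}}$ --- which does follow from your $\psi$-side spreading argument --- finishes that case. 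On the complementary set $J_2$ (each $\al_q$ meets at least two blocks) a block can straddle essentially only one $\al_q$, so there your distinctness argument is valid and the $2C2^n\max\{c_k\}$ charges sum to $O(2^nC\e)$ as you intend. With the $J_1$ regrouping inserted, your proof coincides with the paper's.
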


\begin{proof}
Assume that $x = 2^n\sum_{k=1}^mc_kx_k$ such that the assumptions of Definition \ref{defvector} are satisfied.
Set $q_1 = \min\{q: \ran \al_q\cap\ran x\neq\varnothing\}$. For
convenience assume that $q_1 = 1$. Then by Lemma \ref{lem3.6} we
have that
\begin{equation}
|\al_1(x)| < \frac{6C}{s(\al_1)} + \frac{7}{18\cdot 2^{2n}}
\label{lem3.7eq1}
\end{equation}
Set
\begin{eqnarray*}
J_1 &=& \{q>1:\;\text{there exists at most one}\;k\;\text{such that}\;\ran\al_q\cap\ran x_k\neq\varnothing\}\\
J_2 &=& \{q>1: q\notin J_1\}\\
G^q &=& \{k: \ran \al_q\cap\ran x_k\neq\varnothing\}\quad\text{for}\;q>1.\\
G_1 &=& \{k:\;\text{there exists}\;q\in J_1\;\text{with}\;\ran\al_q\cap\ran x_k\neq\varnothing\}
\end{eqnarray*}
Then $\{\min\supp x_k: k\in G_1\setminus\{\min
G_1\}\}\in\mathcal{S}_j$, hence $\sum_{k\in G_1}c_k< \frac{1}{18C2^{3n}}$.

It is easy to check that
\begin{equation}
\sum_{q\in J_1}|\al_q(x)| \leqslant 2^jC2^n\|\sum_{k\in
G_1}c_kx_k\| < 2^{n-1}C2^{n}\frac{1}{18C2^{3n}} = \frac{1}{36\cdot 2^n} \label{lem3.7eq2}
\end{equation}

For $q\in J_2$, Lemma \ref{lem3.6} yields that
\begin{eqnarray*}
|\al_q(x)| &<& \frac{C2^n}{s(\al_q)}\sum_{k\in G^q}c_k + 2C2^n\max\{c_k: k\in G^q\}\\
&<& \frac{C2^n}{\min\supp x}\sum_{k\in G^q}c_k + 2C2^nc_{k_q}
\end{eqnarray*}
where $k_q\in G^q$, such that $c_{k_q} = \max\{c_k: k\in G^q\}$.

Then $\{\min\supp x_{k_q}: q\in J_2\setminus\{\min
J_2\}\}\in\mathcal{S}_j$. By the above we conclude that
\begin{equation}
\sum_{q\in J_2}|\al_q(x)| < \frac{2C2^n}{\min\supp x} +
\frac{8}{36\cdot2^{2n}} < \frac{1}{4\cdot2^n} + \frac{8}{36\cdot2^{2n}} \label{lem3.7eq3}
\end{equation}
Summing up \eqref{lem3.7eq1}, \eqref{lem3.7eq2} and
\eqref{lem3.7eq3}, the desired result follows.

\end{proof}

\begin{prp}
Let $\{x_k\}_{k\inn}$ be a block sequence of $(C,\theta,n_k)$ vectors in $\X$ with $\{n_k\}_k$ strictly increasing. Then $\adx = 0$.\label{prop3.8}
\end{prp}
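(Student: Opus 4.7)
The plan is to verify $\adx = 0$ directly from its definition, with Lemma \ref{lem3.7} doing essentially all of the work. The point is that every $x_k$ is a $(C,\theta,n_k)$ vector with $n_k$ strictly increasing, so once a fixed admissibility index $n$ is named, Lemma \ref{lem3.7} applies to all but finitely many $x_k$ with the strict inequality $n < n_k$ that the lemma requires.

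To implement this, I would first fix the data specified in the definition of $\adx = 0$: a natural number $n$, a very fast growing sequence $\{\al_q\}_q$ of $\al$-averages in $W$, a successive sequence of finite sets $\{F_k\}_k\subset\mathbb{N}$ (``increasing'' in the block sense used in Proposition \ref{prop3.3}(iii)) with each $\{\al_q\}_{q\in F_k}$ being $\Sn$-admissible, and an arbitrary subsequence $\{x_{m_k}\}_k$ of $\{x_k\}_k$. Since $\{n_{m_k}\}_k$ is strictly increasing, there exists $k_0$ with $n_{m_k} > n$ for all $k \geqslant k_0$. For such $k$, the subsequence $\{\al_q\}_{q\in F_k}$ remains very fast growing and $\Sn$-admissible with $n < n_{m_k}$, so Lemma \ref{lem3.7} applied to the $(C,\theta,n_{m_k})$ vector $x_{m_k}$ yields
\begin{equation*}
\sum_{q\in F_k}|\al_q(x_{m_k})| \;<\; \frac{6C}{s(\al_{q_k})} + \frac{1}{2^{n_{m_k}}},
\end{equation*}
where $q_k = \min\{q \in F_k : \ran \al_q \cap \ran x_{m_k} \neq \varnothing\}$ (and the sum vanishes if no such $q$ exists).

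Letting $k \to \infty$, the second summand tends to $0$ because $n_{m_k} \to \infty$. For the first, since the $F_k$ are successive, $\min F_k \to \infty$, hence $q_k \geqslant \min F_k \to \infty$; the very fast growing condition $s(\al_{q+1}) > 2^{\max\supp \al_q}$ then forces $s(\al_{q_k}) \to \infty$. Thus the whole sum tends to zero, giving $\adx = 0$.

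The argument is essentially a packaging of Lemma \ref{lem3.7}, and I do not foresee a serious obstacle. The one point requiring care is ensuring the admissibility parameter $j = n$ of Lemma \ref{lem3.7} is strictly smaller than the parameter $n_{m_k}$ of the vector being tested; this is precisely where the strict monotonicity of $\{n_k\}_k$ enters and explains why it is imposed in the hypothesis of the proposition.
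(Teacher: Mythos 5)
Your proof is correct and rests on exactly the same engine as the paper's: Lemma \ref{lem3.7} does all the work, with the strict monotonicity of $\{n_k\}_k$ used precisely to guarantee the hypothesis $j<n_{m_k}$ for all but finitely many terms. The only difference is packaging --- the paper verifies the uniform criterion of Proposition \ref{prop3.3}(ii), where the hypothesis $s(\al_q)>j_0$ kills the term $\frac{6C}{s(\al_1)}$, whereas you check the definition of $\adx=0$ directly and obtain the decay of $\frac{6C}{s(\al_{q_k})}$ from the successiveness of the $F_k$ together with the very fast growing condition; both are valid.
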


\begin{proof}
We shall make use of Proposition \ref{prop3.3}. Let $\e>0$ and
choose $j_0\inn$ such that $\frac{6C}{j_0} < \frac{\e}{2}$. For
$j\geqslant j_0$, choose $k_j$, such that $\frac{1}{2^{n_{k_j}}}<\frac{\e}{2}$. For $k\geqslant k_j$, Lemma
\ref{lem3.7} yields that if $\{\al_q\}_{q=1}^d$ is a very fast
growing and $\mathcal{S}_j$-admissible sequence of $\al$-averages
and $s(\al_q)>j_0$, for $q=1,\ldots,d$, we have that
\begin{equation*}
\sum_{q=1}^d|\al_q(x_k)| < \frac{6C}{j_0} + \frac{1}{2^{n_k}} <
\frac{\e}{2} + \frac{\e}{2} = \e
\end{equation*}
\end{proof}

\begin{prp} Let $x$ be a $(C,\theta,n)$ vector in $\X$. Then for any $f\in
W$ functional of type I$_\al$, such that $w(f) = j < n$, we have
that $|f(x)| < \frac{7C}{2^j}$.\label{cor3.9}
\end{prp}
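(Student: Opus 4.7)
The plan is to derive this proposition as an immediate consequence of Lemma \ref{lem3.7}, by unpacking the definition of a type I$_\al$ functional of weight $j$.

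First I would write $f = \pm Ig$, where $I$ is an interval of naturals and
\[
g = \frac{1}{2^j}\sum_{q=1}^d \al_q
\]
for some $\mathcal{S}_j$-admissible and very fast growing sequence $\{\al_q\}_{q=1}^d$ of $\al$-averages in $W$. Then $|f(x)| \leqslant \frac{1}{2^j}\sum_{q=1}^d |\al_q(Ix)|$. Observing that $\al_q(Ix) = (I\al_q)(x)$ and that $I\al_q$ is again an $\al$-average in $W$ of the same size $s(I\al_q) = s(\al_q)$ (since the size is just the denominator appearing in the averaging, which is unaffected by restricting each summand to $I$), I may pass, after discarding those $I\al_q$ which vanish, to a sequence $\{\al_q'\}_{q=1}^{d'}$ which is still $\mathcal{S}_j$-admissible and very fast growing, with $\sum_{q=1}^d |\al_q(Ix)| = \sum_{q=1}^{d'} |\al_q'(x)|$.

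Next I would apply Lemma \ref{lem3.7} to $\{\al_q'\}_{q=1}^{d'}$, using the hypothesis $j < n$, to obtain
\[
\sum_{q=1}^{d'} |\al_q'(x)| < \frac{6C}{s(\al_1')} + \frac{1}{2^n}.
\]
Since $s(\al_1')\geqslant 1$, $C\geqslant 1$, and $\frac{1}{2^n}<1\leqslant C$, the right-hand side is strictly less than $6C+C=7C$. Dividing through by $2^j$ then yields $|f(x)| < \frac{7C}{2^j}$, as required.

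The entire argument is essentially packaging: all the real work has been done in Lemma \ref{lem3.7}. The only point deserving mild care is the verification that interval-restricting a very fast growing $\mathcal{S}_j$-admissible sequence of $\al$-averages preserves both properties, which follows because restriction does not increase supports and does not alter the denominator that defines the size of an $\al$-average. No substantial obstacle remains at this stage.
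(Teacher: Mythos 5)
Your proposal is correct and follows essentially the same route as the paper: the paper's proof is exactly the one-line application of Lemma \ref{lem3.7} followed by the observation that $\frac{6C}{s(\al_1)}+\frac{1}{2^n}\leqslant 7C$. The only difference is that you spell out the (harmless) interval-restriction step $f=\pm Ig$, which the paper suppresses.
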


\begin{proof}
Let $f = \frac{1}{2^j}\sum_{q=1}^d\al_j$ be a functional of type
I$_\al$ with weight $w(f) = j < n$. Then Lemma \ref{lem3.7} yields
that
\begin{equation*}
|f(x)| \leqslant \frac{1}{2^j}\big(
\sum_{q=1}^d|\al_q(x)|\big) <
\frac{1}{2^j}\big(\frac{6C}{s(\al_1)} + \frac{1}{2^n}\big)\leqslant \frac{7C}{2^j}
\end{equation*}

\end{proof}

The following proposition follows immediately from Proposition \ref{cor3.9} and Remark \ref{rem3.11}.

\begin{prp}
Let $\{x_k\}_{k\inn}$ be a block sequence of $(C,\theta,n_k)$ vectors in $\X$ with $\{n_k\}_k$ strictly increasing. Then passing, if necessary, to a subsequence, $\{x_k\}_{k\inn}$ is
$(7C,\{n_k\}_k)$\;$\al$-RIS. \label{rem3.12}
\end{prp}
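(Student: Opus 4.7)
The plan is to recognize that this proposition is essentially an immediate consequence of the two results that precede it, combined with Remark \ref{rem3.11}.

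First I would verify that the two conditions isolated by Remark \ref{rem3.11} hold on the nose, with constant $7C$. The uniform bound $\|x_k\| < 7C$ for every $k$ is exactly the content of Remark \ref{remexactest} applied to each $(C,\theta,n_k)$ vector in the sequence. For the other condition, Proposition \ref{cor3.9} says precisely that for every $k$ and every type I$_\al$ functional $f\in W$ with $w(f) = j < n_k$ one has $|f(x_k)| < 7C/2^j$. Thus condition (i) of Definition \ref{def3.10} is already satisfied by the full sequence $\{x_k\}_{k\inn}$, with the strictly increasing sequence of naturals $\{n_k\}_k$ and constant $7C$.

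Having done this, I would simply invoke Remark \ref{rem3.11}: since $\{x_k\}_k$ is a block sequence in $\X$, is uniformly bounded by $7C$, and satisfies condition (i) of Definition \ref{def3.10} with respect to $\{n_k\}_k$, a suitable subsequence (still indexed, after relabeling, by the corresponding subsequence of $\{n_k\}_k$) is $(7C,\{n_k\}_k)$ $\al$-RIS. The only role of passing to a subsequence is to secure condition (ii) of Definition \ref{def3.10}, namely $\frac{1}{2^{n_{k+1}}}\max\supp x_k < \frac{1}{2^{n_k}}$; this is achieved by thinning the indices so that consecutive values of $n_k$ are spaced widely enough compared to $\log_2(\max\supp x_k)$, which is possible because $\{n_k\}_k$ is strictly increasing (hence unbounded).

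There is no substantive obstacle: the proposition is stated precisely so that it follows by plugging Proposition \ref{cor3.9} and Remark \ref{remexactest} into the machinery of Remark \ref{rem3.11}. The only minor bookkeeping point is to make sure that after passing to a subsequence one still indexes the corresponding $n_k$'s consistently, but this is routine.
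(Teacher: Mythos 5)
Your proposal is correct and matches the paper's argument exactly: the paper derives this proposition immediately from Proposition \ref{cor3.9} (which gives condition (i) of Definition \ref{def3.10} with constant $7C$) together with Remark \ref{rem3.11} (which supplies condition (ii) after passing to a subsequence), and your use of Remark \ref{remexactest} for the norm bound is the intended justification.
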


\subsection*{Block sequences with $\be$-index zero.}
In this subsection we first prove that every block sequence of $(C,\theta,n_k)$ exact vectors with $\{n_k\}_k$ strictly increasing, has $\be$-index zero. This yields that every block
sequence has a further block sequence with both $\al, \be$ indices
equal to zero. We start with the following technical lemma. Its
meaning becomes more transparent in the following Corollary
\ref{corx3.14} and Lemmas \ref{lem3.14}, \ref{lem3.15}.

\begin{ntt} Let $x = 2^n\sum_{k=1}^mc_kx_k$ be a $(C,\theta,n)$ exact vector, with $\{x_k\}_{k=1}^m$ $(C,\{n_k\}_{k=1}^m)$ $\al$-RIS. Let also $f = \frac{1}{2}\sum_{j=1}^df_j$ be a type II functional. Set
\begin{eqnarray*}
I_0 &=& \{j: n\leqslant w(f_j) < 2^{2n}\}\\
I_1 &=& \{j: w(f_j) < n\}\\
I_2 &=& \{j: 2^{2n} \leqslant w(f_j) < n_1\}\\
J_k &=& \{j: n_k\leqslant w(f_j) <
n_{k+1}\},\;\text{for}\;k<m\;\text{and}\quad J_m = \{j:
n_m\leqslant w(f_j)\}
\end{eqnarray*}
\end{ntt}
Under the above notation the following lemma holds.

\begin{lem}
Let $x = 2^n\sum_{k=1}^mc_kx_k$ be a $(C,\theta,n)$ exact vector in $\X,
n\geqslant 2$, with $\{x_k\}_{k=1}^m$ $(C,\{n_k\}_{k=1}^m)$ $\al$-RIS. Let also $f = \frac{1}{2}\sum_{j=1}^df_j$ be a functional of type II.

Then there exists $F_f\subset \{k: \ran f\cap \ran
x_k\neq\varnothing\}$ with $\{\min\supp x_k: k\in
F_f\}\in\mathcal{S}_2$ such that
\begin{eqnarray*}
|f(x)| &<& 7C\#I_0 + \frac{C}{2}\bigg(\sum_{k=2}^m\sum_{j\in J_k}\frac{2^{n_k}}{2^{w(f_j) + n_{k-1}}} + \sum_{k=1}^{m-1}\sum_{j\in J_k}\frac{2^n}{2^{w(f_j)}}\\
 &&+ \sum_{j\in I_1}\frac{7}{2^{w(f_j)}}
 + \sum_{j\in I_2}\frac{2^n}{2^{w(f_j)}}\bigg) + C2^n\sum_{k\in F_f}c_k
\end{eqnarray*}\label{lem3.13}
\end{lem}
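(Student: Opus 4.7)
The plan is to compute $|f(x)|=2^n\cdot\tfrac12\bigl|\sum_{j=1}^d f_j\bigl(\sum_{k=1}^m c_kx_k\bigr)\bigr|$ by partitioning the outer sum over $j$ according to the decomposition $\{1,\ldots,d\}=I_0\cup I_1\cup I_2\cup\bigcup_k J_k$ and estimating the contribution of each class of indices separately, using the appropriate tool for the weight range in question.

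For $j\in I_1$ (so $w(f_j)<n$), I would apply Proposition \ref{cor3.9} directly to the $(C,\theta,n)$ vector $x$ to obtain $|f_j(x)|<7C/2^{w(f_j)}$, which produces the $\frac{C}{2}\sum_{j\in I_1}7/2^{w(f_j)}$ summand. For $j\in I_0$ I would use the crude estimate $|f_j(x)|\le\|x\|<7C$ from Remark \ref{remexactest} to obtain the $7C\#I_0$ term. For $j\in I_2$, since $w(f_j)<n_1\le n_l$ for every $l\in\{1,\ldots,m\}$, the $\al$-RIS condition (i) of Definition \ref{def3.10} gives $|f_j(x_l)|<C/2^{w(f_j)}$ for each $l$; multiplying by $c_l$, summing over $l$ (using $\sum c_l=1$), and then multiplying by the factor $2^n$ yields the $\frac{2^n}{2^{w(f_j)}}$ contribution.

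The main case is $j\in J_k$, where I would split $f_j\bigl(\sum_l c_lx_l\bigr)$ into the three regimes $l>k$, $l=k$, $l<k$. For $l>k$ one has $n_l\geqslant n_{k+1}>w(f_j)$, so $\al$-RIS applies and gives $|f_j(x_l)|<C/2^{w(f_j)}$; summing over $l>k$ and then over $j\in J_k$ (with $k\leqslant m-1$) yields the $\frac{C}{2}\sum_{k\leqslant m-1}\sum_{j\in J_k}\frac{2^n}{2^{w(f_j)}}$ term. For $l=k$ I would place $k$ into $F_f$ whenever $\ran f_j\cap\ran x_k\neq\varnothing$ and use the trivial $c_k|f_j(x_k)|\le Cc_k$ bound, which produces the residual $C2^n\sum_{k\in F_f}c_k$ summand. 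For $l<k$, the key observation is that $f_j=\frac{1}{2^{w(f_j)}}\sum_q\al_q$ is an average of very fast growing (hence successively supported) $\al$-averages, which forces $|f_j(e_i)|\le 1/2^{w(f_j)}$ on every basis vector; combined with the support-growth bound $\max\supp x_{k-1}<2^{n_k-n_{k-1}}$ coming from $\al$-RIS condition (ii), this yields $|f_j(x_{k-1})|\le C\cdot 2^{n_k-n_{k-1}}/2^{w(f_j)}$. The unwanted factor $2^n$ coming from $x=2^n\sum c_kx_k$ is then absorbed by the s.c.c.\ bound $c_{k-1}<\e<1/(36C2^{3n})\le 2^{-n}$, producing the $\frac{C}{2}\sum_{k\geqslant 2}\sum_{j\in J_k}\frac{2^{n_k}}{2^{w(f_j)+n_{k-1}}}$ contribution. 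Further interior indices $l\leqslant k-2$ are either harmless (when $\ran f_j$ does not reach $\ran x_l$) or absorbed into $F_f$ via the same boundary argument.

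Finally, I would verify $\{\min\supp x_k:k\in F_f\}\in\mathcal{S}_2$ by arguing that each $f_j$ in the special sequence can contribute only boundedly many new entries to $F_f$ (essentially the one or two $x_k$'s whose range meets $\ran f_j$ partially), so that $F_f$ is contained in an $\mathcal{S}_1*\mathcal{S}_1=\mathcal{S}_2$ set built from the $\mathcal{S}_1$-admissibility of $\{f_j\}_{j=1}^d$. The main technical obstacle will be the $J_k$ case: one has to decompose the support interactions cleanly between the three regimes $l>k$, $l=k$, $l<k$, verify that the combined use of $|f_j(e_i)|\le 2^{-w(f_j)}$, the support growth $\max\supp x_{k-1}<2^{n_k-n_{k-1}}$, and the s.c.c.\ bound $c_{k-1}\le 2^{-n}$ correctly eliminates the spurious $2^n$ factor, and simultaneously keep $F_f$ inside $\mathcal{S}_2$ while the rest of the contributions are folded into the displayed sums.
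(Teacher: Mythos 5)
Your decomposition of $\{1,\ldots,d\}$ into $I_0\cup I_1\cup I_2\cup\bigcup_kJ_k$ and your treatment of $I_0$, $I_1$, $I_2$ and of the off-diagonal parts of the $J_k$ blocks (the regime $l>k$ via the $\al$-RIS condition, and $l<k$ via $|f_j(e_i)|\leqslant 2^{-w(f_j)}$ together with $\max\supp x_{k-1}<2^{n_k-n_{k-1}}$ and $c_l<\e$) all match the paper's argument. There is, however, a genuine gap in the diagonal term $j\in J_k$, $l=k$. You bound each $c_k|f_j(x_k)|\leqslant Cc_k$ separately and sum over $j\in J_k$; this yields $\frac{2^n}{2}\sum_{j\in J_k}c_k|f_j(x_k)|\leqslant\frac{C2^nc_k}{2}\,\#\{j\in J_k:\ran f_j\cap\ran x_k\neq\varnothing\}$, and that cardinality factor is not bounded by $2$: although the weights of a special sequence satisfy $w(f_{j+1})>2^{w(f_j)}$, the gap $n_{k+1}-n_k$ is unconstrained, so arbitrarily many successive $f_j$ can have weights in $[n_k,n_{k+1})$ and supports entirely inside $\ran x_k$. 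Your estimate therefore produces something like $C2^n\sum_k(\#J_k)c_k$ rather than the claimed $C2^n\sum_{k\in F_f}c_k$.

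The paper's fix, which is the one essential idea your proposal misses, is to treat each block $J_k$ as a single functional: $J_k$ is an interval of $\{1,\ldots,d\}$, so $g_k=\frac12\sum_{j\in J_k}f_j$ is an interval restriction of the type II functional $f$ and hence again lies in $W$; consequently $|g_k(x_k)|\leqslant\|x_k\|\leqslant C$ and $\frac{2^n}{2}\bigl|\sum_{j\in J_k}f_j(c_kx_k)\bigr|\leqslant C2^nc_k$ with no cardinality factor. This grouping also makes the $\mathcal{S}_2$ claim clean, since $F_f=\{k:\ran g_k\cap\ran x_k\neq\varnothing\}$ collects at most one index per block and the nonzero $g_k$ are $\mathcal{S}_1$-admissible. (Incidentally, the same grouping over the interval $I_0$ gives $\frac12|\sum_{j\in I_0}f_j(x)|<7C$ outright, slightly sharper than your $7C\#I_0$, though your cruder bound also meets the stated inequality.)
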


\begin{proof}
Notice that $\{J_k\}_{k=1}^m$ are disjoint intervals of
$\{1,\ldots,d\}$ and that $g_k = \frac{1}{2}\sum_{j\in J_k}f_j\in
W$, for $k=1,\ldots,m$.

Set $F_f = \{k: \ran g_k\cap\ran x_k\neq\varnothing\}$. It easily
follows that $\{\min\supp x_k: k\in F_f\}\in\mathcal{S}_2$ and
that
\begin{equation}
\frac{2^n}{2}\sum_{k=1}^m|\sum_{j\in J_k}f_j(c_kx_k)| \leqslant
C2^n\sum_{k\in F_f}c_k \label{lem3.13eq1}
\end{equation}
Let $k_0\leqslant m, j\in J_{k_0}$. Then
\begin{equation}
2^n|f_j(\sum_{k<k_0}c_kx_k)| < C\frac{2^{n_{k_0}}}{2^{w(f_j) +
n_{k_0 - 1}}}\;\;\text{and}\;\; 2^n|f_j(\sum_{k>k_0}c_kx_k)| <
C\frac{2^n}{2^{w(f_j)}} \label{lem3.13eq2}
\end{equation}

Proposition \ref{cor3.9} yields that for $j\in I_1$ we have that
$|f_j(x)| < \frac{7C}{2^{w(f_j)}}$ and hence
\begin{equation}
\frac{1}{2}\sum_{j\in I_1}|f_j(x)| < \frac{C}{2}\sum_{j\in
I_1}\frac{7}{2^{w(f_j)}} \label{lem3.13eq3}
\end{equation}

For $j\in I_2$ we have that $|f_j(x)| < \frac{C2^n}{2^{w(f_j)}}$ and therefore
\begin{equation}
\frac{1}{2}\sum_{j\in I_2}|f_j(x)| < \frac{C}{2}\sum_{j\in
I_2}\frac{2^n}{2^{w(f_j)}} \label{lem3.13eq4}
\end{equation}

By Remark \ref{remexactest} yields that $\|x\| < 7C$, and since
$I_0$ is an interval, it follows that $\frac{1}{2}\sum_{j\in
I_0}f_j\in W$. Therefore
\begin{equation}
\frac{1}{2}|\sum_{j\in I_0}f_j(x)| < 7C \label{lem3.13eq5}
\end{equation}

Summing up \eqref{lem3.13eq1} to \eqref{lem3.13eq5} the desired
result follows.

\end{proof}

The next corollary will be useful in the next sections, when we
define the notion of dependent sequences.

\begin{cor}
Let $x$ be a $(C,\theta,n)$ exact vector in $\X,
n\geqslant 3$. Let also $f = \frac{1}{2}\sum_{j=1}^df_j$ be a functional of type II. with $\widehat{w}(f)\cap\{n,\ldots,2^{2n}\} = \varnothing$.
Then
\begin{equation*}
|f(x)| < \frac{C}{2^n} + \frac{C}{2^{2n}} + \sum_{\{j:\;w(f_j)<n\}}\frac{4C}{2^{w(f_j)}}
\end{equation*}\label{corx3.14}
\end{cor}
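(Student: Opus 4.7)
The plan is to apply Lemma \ref{lem3.13} and estimate each of its five contributions in turn. By hypothesis $\widehat{w}(f)\cap\{n,\ldots,2^{2n}\}=\varnothing$, so $I_0=\varnothing$ and the leading term $7C\cdot\#I_0$ vanishes outright.

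For the term $C2^n\sum_{k\in F_f}c_k$: since $n\geqslant 3$ we have $\mathcal{S}_2\subseteq\mathcal{S}_{n-1}$; because $\{\min\supp x_k:k\in F_f\}\in\mathcal{S}_2$, clause (ii) of Definition \ref{defscc} applied to the underlying basic s.c.c.\ gives $\sum_{k\in F_f}c_k<\e<\frac{1}{36C\cdot 2^{3n}}$, so this contribution is bounded by $\frac{1}{36\cdot 2^{2n}}$, which fits inside $\frac{C}{2^{2n}}$. For the $I_1$ contribution, since $I_1=\{j:w(f_j)<n\}$, we have $\frac{C}{2}\sum_{j\in I_1}\frac{7}{2^{w(f_j)}}=\frac{7C}{2}\sum_{j:w(f_j)<n}\frac{1}{2^{w(f_j)}}$, which is absorbed into $\sum_{j:w(f_j)<n}\frac{4C}{2^{w(f_j)}}$ with slack.

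The substantive work concerns the remaining three sums---the two over $\bigcup_k J_k$ and the one over $I_2$. In all of them, the weights $w(f_j)$ are distinct elements of $L$ (by injectivity of $\sigma$, the weights of a special sequence lie in $L_1\cup L_2\subseteq L$ and are pairwise distinct), and they are bounded below by $n_k$ on $J_k$ and by $2^{2n}$ on $I_2$; by Definition \ref{defvector} we moreover have $n_1>2^{2n}$. The decisive structural fact is that, from $\ell_{s+1}>2^{2\ell_s}$, one immediately derives
\begin{equation*}
\sum_{\ell\in L,\,\ell\geqslant N}\frac{1}{2^\ell}\leqslant\frac{2}{2^N}\qquad\text{for every }N.
\end{equation*}
Applying this with $N=n_k$ and with $N=2^{2n}$ gives $\sum_{j\in J_k}\frac{1}{2^{w(f_j)}}\leqslant\frac{2}{2^{n_k}}$ and $\sum_{j\in I_2}\frac{1}{2^{w(f_j)}}\leqslant\frac{2}{2^{2^{2n}}}$. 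Substituting, the three sums are dominated respectively by $\sum_{k=2}^m\frac{2}{2^{n_{k-1}}}$, $2^{n+1}\sum_{k=1}^{m-1}\frac{1}{2^{n_k}}$, and $\frac{2^{n+1}}{2^{2^{2n}}}$; telescoping and using $n_1>2^{2n}$ with $n\geqslant 3$, each is far smaller than $\frac{1}{2^n}$, so their combined contribution (multiplied by $C/2$) fits easily within $\frac{C}{2^n}$.

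Adding the pieces yields the stated inequality. The only real obstacle is bookkeeping: one must carefully exploit the super-exponential sparsity of $L$ (built into the coding function $\sigma$) in order to push the large-weight sums into the slack $\frac{C}{2^n}+\frac{C}{2^{2n}}$; once this is done, the estimate reduces to direct substitution into Lemma \ref{lem3.13}.
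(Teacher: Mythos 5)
Your proposal is correct and follows essentially the same route as the paper: both apply Lemma \ref{lem3.13}, kill the $7C\#I_0$ term via the weight hypothesis, bound $C2^n\sum_{k\in F_f}c_k$ by $\frac{1}{36\cdot 2^{2n}}$ using $\mathcal{S}_2\subseteq\mathcal{S}_{n-1}$ and the s.c.c.\ condition, and absorb the remaining sums into $\frac{C}{2^n}+\frac{C}{2^{2n}}$ using the lacunarity of the weights together with $n_1>2^{2n}$. Your bookkeeping is slightly more explicit (separating the $I_2$ and $\cup_k J_k$ contributions where the paper combines them in \eqref{corx3.14eq3}), but the decomposition and the key estimates are the same.
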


\begin{proof}
Let $x = 2^n\sum_{k=1}^mc_kx_k$ with $\{x_k\}_{k=1}^m$ $(C,\{n_k\}_{k=1}^m)$ $\al$-RIS.
Apply Lemma \ref{lem3.13}. Then the following holds.
\begin{eqnarray}
|f(x)| &<& \frac{C}{2}\bigg(\sum_{k=2}^m\sum_{j\in J_k}\frac{2^{n_k}}{2^{w(f_j)
+ n_{k-1}}} + \sum_{k=1}^{m-1}\sum_{j\in J_k}\frac{2^n}{2^{w(f_j)}}\label{corx3.14eq1}\\
&&+ \sum_{j\in I_1}\frac{7}{2^{w(f_j)}} + \sum_{j\in
I_2}\frac{2^n}{2^{w(f_j)}}\bigg) + \frac{1}{36\cdot2^{2n}}\nonumber
\end{eqnarray}
Notice the following.
\begin{equation}
\sum_{k=2}^m\sum_{j\in J_k}\frac{2^{n_k}}{2^{w(f_j) + n_{k-1}}}
\leqslant \frac{1}{2^{n_1}} < \frac{1}{2^{2n}} \label{corx3.14eq2}
\end{equation}
\begin{equation}
\sum_{j\in I_2}\frac{2^n}{2^{w(f_j)}} + \sum_{k=1}^{m-1}\sum_{j\in
J_k}\frac{2^n}{2^{w(f_j)}} = 2^n\big(\sum_{\{j: w(f_j)\geqslant
2^{2n}\}}\frac{1}{2^{w(f_j)}}\big) \leqslant
\frac{2}{2^n}\label{corx3.14eq3}
\end{equation}
Applying \eqref{corx3.14eq2} and \eqref{corx3.14eq3} to
\eqref{corx3.14eq1} the result follows.
\end{proof}

\begin{lem}
Let $x = 2^n\sum_{k=1}^mc_kx_k$ be a $(C,\theta,n)$ exact vector in $\X,
n\geqslant 2$, with $\{x_k\}_{k=1}^m$ $(C,\{n_k\}_{k=1}^m)$ $\al$-RIS. Let also $\be$ be a $\be$-average. Then there exists $F_\be\subset
\{k: \ran \be\cap \ran x_k\neq\varnothing\}$ with $\{\min\supp
x_k: k\in F_\be\}\in\mathcal{S}_2$ such that
\begin{equation*}
|\be(x)| < \frac{8C}{s(\be)} + C2^n\sum_{k\in F_\be}c_k
\end{equation*}\label{lem3.14}
\end{lem}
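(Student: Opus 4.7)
The plan is to reduce the estimate on the $\beta$-average $\be=\frac{1}{p}\sum_{j=1}^{d}f_j$ (with $p=s(\be)$ and $d\leqslant p$) to the estimates provided by Lemma \ref{lem3.13} applied to each constituent type II functional $f_j=\frac{1}{2}\sum_i f_{j,i}$, and then to aggregate. The decisive structural feature is that the sets $\widehat{w}(f_j)$ are pairwise disjoint across $j$, and since $\sigma$ is one-to-one and $L_1\cap L_2=\varnothing$, the full collection $\{w(f_{j,i}):(j,i)\}$ consists of pairwise distinct elements of $L$.

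For each $j$, Lemma \ref{lem3.13} produces a set $F_{f_j}\subset\{k:\ran f_j\cap\ran x_k\neq\varnothing\}$ with $\{\min\supp x_k:k\in F_{f_j}\}\in\mathcal{S}_2$, together with a bound
\[
|f_j(x)|<7C\#I_0^{(j)}+\frac{C}{2}T_j+C2^{n}\sum_{k\in F_{f_j}}c_k,
\]
where $I_0^{(j)}=\{i:n\leqslant w(f_{j,i})<2^{2n}\}$ and $T_j$ collects the four remaining tail sums appearing in Lemma \ref{lem3.13}. Applying Remark \ref{remark1.1} to the distinct weights yields $\sum_j\#I_0^{(j)}\leqslant|L\cap\{n,\ldots,2^{2n}\}|\leqslant 1$. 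Reindexing $\sum_j T_j$ by the weights $\ell\in L$ and using $\ell_{a+1}>2^{2\ell_a}$, $\sum_a 2^{-\ell_a}<\frac{1}{1000}$, together with $n_1>2^{2n}$, each aggregate tail is controlled by a geometric series and contributes at most a tiny constant, so $\frac{C}{2}\sum_j T_j<C$.

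Set $F_\be=\bigcup_{j} F_{f_j}$. Since each $k\in F_\be$ lies in at most $d\leqslant p$ of the sets $F_{f_j}$,
\[
\sum_j\sum_{k\in F_{f_j}}c_k=\sum_{k\in F_\be}c_k\,|\{j:k\in F_{f_j}\}|\leqslant p\sum_{k\in F_\be}c_k.
\]
Summing the per-$j$ estimates and dividing by $p$ then produces
\[
|\be(x)|\leqslant\frac{1}{p}\sum_j|f_j(x)|<\frac{7C+C}{p}+C2^{n}\sum_{k\in F_\be}c_k=\frac{8C}{s(\be)}+C2^{n}\sum_{k\in F_\be}c_k.
\]

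The main obstacle I foresee is verifying that $F_\be$ inherits the $\mathcal{S}_2$-admissibility property, since $\mathcal{S}_2$ is not preserved under arbitrary unions over up to $p$ indices. I would address this by choosing, for each $k\in F_\be$, a witness $(j_k,i_k)$ with $w(f_{j_k,i_k})\in[n_k,n_{k+1})$ and $\ran f_{j_k,i_k}\cap\ran x_k\neq\varnothing$; the strict monotonicity of $\{n_k\}$ together with distinctness of weights makes $k\mapsto(j_k,i_k)$ injective with strictly increasing weights. Combining this with the $\mathcal{S}_1$-admissibility of each special sequence constituting an $f_j$ and the super-exponential growth of $L$ should exhibit $\{\min\supp x_k:k\in F_\be\}$ as $\mathcal{S}_2$-admissible, paralleling the analogous step in the proof of Lemma \ref{lem3.13}.
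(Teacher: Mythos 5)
Your reduction to Lemma \ref{lem3.13}, the use of Remark \ref{remark1.1} to see that the intervals $I_0^{(j)}$ contribute at most once in total, and the aggregation of the tail sums over the pairwise distinct weights in $L$ all match the paper's argument and are sound. The genuine gap is exactly the one you flagged: taking $F_\be=\bigcup_j F_{f_j}$ does not yield an $\mathcal{S}_2$ set, and your proposed repair does not close it. The admissibility of a single $F_{f_j}$ in Lemma \ref{lem3.13} rests on an injective assignment of each $k$ to a witness functional drawn from \emph{one} $\mathcal{S}_1$-admissible special sequence, so that $\{\min\supp x_k\}$ is (up to its first element) a spread of a subset of a single $\mathcal{S}_1$ set. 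Across the $d$ constituents of $\be$ your witnesses land in $d$ different $\mathcal{S}_1$-admissible families, and the set of their minima is only a union of $d$ many $\mathcal{S}_1$ sets; since $d$ can be as large as $s(\be)$, which is unconstrained by $\min\supp x_1$, neither the injectivity of $k\mapsto(j_k,i_k)$ nor the lacunarity of $L$ prevents $\{\min\supp x_k:k\in F_\be\}$ from having far too many elements below a fixed threshold to lie in $\mathcal{S}_2$. (Note also that distinct $q$'s can contribute witnesses with weights in the \emph{same} interval $[n_k,n_{k+1})$ for different $k$'s, so the weight intervals do not serialize the union.)

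The paper's proof avoids the union altogether by exploiting the averaging in the $\be$-average: since $d\leqslant p=s(\be)$ and the quantities $\sum_{k\in F_q}c_k$ are nonnegative,
\begin{equation*}
\frac{1}{p}\sum_{q=1}^{d}\sum_{k\in F_q}c_k\;\leqslant\;\max\Big\{\sum_{k\in F_q}c_k:\;q=1,\ldots,d\Big\}=\sum_{k\in F_{q_0}}c_k
\end{equation*}
for some $q_0$, and one simply sets $F_\be=F_{q_0}$. A single $F_{q_0}$ already satisfies $\{\min\supp x_k:k\in F_{q_0}\}\in\mathcal{S}_2$ by Lemma \ref{lem3.13}, so no admissibility of a union ever has to be established. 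Replacing your multiplicity bound $|\{j:k\in F_{f_j}\}|\leqslant p$ by this max--average inequality is the one missing idea; with it the rest of your computation goes through verbatim and gives the stated estimate.
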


\begin{proof}
If $\be = \frac{1}{p}\sum_{q=1}^dg_q$, then by definition the
$g_q$ are functionals of type II with disjoint weights
$\widehat{w}(g_q)$.

For convenience, we may write $g_q = \frac{1}{2}\sum_{j\in
G_q}f_j$, where the index sets $G_q, q=1,\ldots,d$ are pairwise
disjoint. Notice that for $j_1,j_2\in G, j_1\neq j_2$ we have that
$w(f_{j_1})\neq w(f_{j_2})$.

By slightly modifying the previously used notation, set $G =
\cup_{q=1}^dG_q$ and
\begin{eqnarray*}
I_0 &=& \{j\in G: n\leqslant w(f_j) < 2^{2n}\}\\
I_1 &=& \{j\in G: w(f_j) < n\}\\
I_2 &=& \{j\in G: 2^{2n} \leqslant w(f_j) < n_1\}\\
J_k &=& \{j\in G: n_k\leqslant w(f_j) <
n_{k+1}\},\;\text{for}\;k<m\quad\text{and}\\
J_m &=& \{j\in G: n_m\leqslant w(f_j)\}
\end{eqnarray*}
By Remark \ref{remark1.1} there exists at most one $q_0\leqslant
d$, with
$\widehat{w}(f_{q_0})\cap\{n,\ldots,2^{2n}\}\neq\varnothing$ and
if such a $q_0$ exists, then
$\#\widehat{w}(f_{q_0})\cap\{n,\ldots,2^{2n}\}\leqslant 1$.

Apply Lemma \ref{lem3.13}. Then for $q=1,\ldots,d$ there exists
$F_q \subset \{x_k: \ran \be\cap \ran x_k\neq\varnothing\}$ with
$\{\min\supp x_k: k\in F_q\}\in\mathcal{S}_2$ such that
\begin{eqnarray}
2^n|\be(x)| &<& \frac{7C}{p} + \frac{C}{2p}\bigg(\sum_{k=2}^m\sum_{j\in J_k}\frac{2^{n_k}}{2^{w(f_j) + n_{k-1}}}
+ \sum_{k=1}^{m-1}\sum_{j\in J_k}\frac{2^n}{2^{w(f_j)}}\label{lem3.14eq1}\\
&&+ \sum_{j\in I_1}\frac{7}{2^{w(f_j)}} + \sum_{j\in
I_2}\frac{2^n}{2^{w(f_j)}}\bigg) +
\frac{1}{p}C2^n\sum_{q=1}^d\sum_{k\in F_q}c_k\nonumber
\end{eqnarray}
Just as in the proof of Corollary \ref{corx3.14}, notice the
following.
\begin{equation}
\sum_{k=2}^m\sum_{j\in J_k}\frac{2^{n_k}}{2^{w(f_j) + n_{k-1}}}
 < \frac{1}{2^{2n}} \label{lem3.14eq2}
\end{equation}
\begin{equation}
\sum_{j\in I_2}\frac{2^n}{2^{w(f_j)}} + \sum_{k=1}^{m-1}\sum_{j\in
J_k}\frac{2^n}{2^{w(f_j)}} \leqslant
\frac{2}{2^n}\label{lem3.14eq3}
\end{equation}
By the definition of the coding function $\sigma$ we get
\begin{equation}
\sum_{j\in I_1}\frac{7}{2^{w(f_j)}} < \frac{7}{1000}
\label{lem3.14eq4}
\end{equation}
\begin{equation}
\frac{1}{p}C2^n\sum_{q=1}^d\sum_{k\in F_q}c_k\leqslant
C2^n\max\big\{\sum_{k\in F_q}c_k: q=1,\ldots,p\big\} =
C2^n\sum_{k\in F_{q_0}}c_k \label{lem3.14eq5}
\end{equation}
for some $1\leqslant q_0\leqslant d$.

Set $F_\be = F_{q_0}$ and apply \eqref{lem3.14eq2} to
\eqref{lem3.14eq5} to \eqref{lem3.14eq1} to derive the desired
result.

\end{proof}

\begin{lem}
Let $x$ be a $(C,\theta,n)$ exact vector in $\X,
n\geqslant 4$. Let also $\{\be_q\}_{q=1}^d$ be a very fast growing and
$\mathcal{S}_j$-admissible sequence of $\be$-averages with
$j\leqslant n-3$. Then we have that
\begin{equation*}
\sum_{q=1}^d|\be_q(x)| < \sum_{q=1}^d\frac{8C}{s(\be_q)} +
\frac{1}{2^n}
\end{equation*}\label{lem3.15}
\end{lem}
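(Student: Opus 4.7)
The proof parallels that of Lemma~\ref{lem3.7}, with Lemma~\ref{lem3.14} playing the role that Lemma~\ref{lem3.6} played there. Each $\be_q$ contributes its ``main'' term $\frac{8C}{s(\be_q)}$ together with an error term $C2^n\sum_{k\in F_q}c_k$, and the task is to show that the aggregate of these errors does not exceed $\frac{1}{2^n}$.

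\emph{Termwise estimate.} Write $x=2^n\sum_{k=1}^m c_kx_k$ with $\{x_k\}_{k=1}^m$ a $(C,\{n_k\}_{k=1}^m)$ $\al$-RIS and apply Lemma~\ref{lem3.14} to each $\be_q$. One obtains $F_q\subseteq\{k:\ran\be_q\cap\ran x_k\neq\varnothing\}$ with $\{\min\supp x_k:k\in F_q\}\in\mathcal{S}_2$ and
\begin{equation*}
|\be_q(x)|<\frac{8C}{s(\be_q)}+C2^n\sum_{k\in F_q}c_k.
\end{equation*}
Summing over $q$, it remains to prove $C2^n\sum_{q=1}^d\sum_{k\in F_q}c_k<\frac{1}{2^n}$.

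\emph{Interior/boundary split.} Write $F_q=F_q^{\mathrm{in}}\cup F_q^{\mathrm{bd}}$, where $F_q^{\mathrm{in}}=\{k\in F_q:\ran x_k\subseteq\ran\be_q\}$ and $F_q^{\mathrm{bd}}$ consists of at most two indices (the $x_k$, if any, extending to the left of $\min\supp\be_q$ and to the right of $\max\supp\be_q$). The sets $F_q^{\mathrm{in}}$ are pairwise disjoint (because the $\ran\be_q$ are), and $\{\min\supp x_k:k\in F_q^{\mathrm{in}}\}\subseteq [\min\supp\be_q,\max\supp\be_q]$. Since $\{\min\supp\be_q\}_{q=1}^d\in\mathcal{S}_j$ and the Schreier families are spreading, the union satisfies
\begin{equation*}
\{\min\supp x_k:k\in\textstyle\bigcup_q F_q^{\mathrm{in}}\}\in\mathcal{S}_j*\mathcal{S}_2=\mathcal{S}_{j+2}\subseteq\mathcal{S}_{n-1},
\end{equation*}
the last inclusion using $j\leqslant n-3$. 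Applying the $(n,\e)$ s.c.c.\ property of $\sum c_k e_{\min\supp x_k}$ (with $\e<\frac{1}{36C2^{3n}}$) yields $\sum_{k\in\bigcup_qF_q^{\mathrm{in}}}c_k<\e$.

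\emph{Boundary.} This is the genuinely delicate part, since a single $x_k$ may straddle several consecutive $\be_q$'s and thus appear in several $F_q^{\mathrm{bd}}$'s. Mirroring the treatment of $J_2$ in the proof of Lemma~\ref{lem3.7}, for each $q$ select its left- and right-extending representatives $k_q^L,k_q^R$. The families $\{x_{k_q^L}\}$ and $\{x_{k_q^R}\}$ (each with possible repetition only at straddling indices) have min supports that, after discarding one extreme element, lie in $\mathcal{S}_j\subseteq\mathcal{S}_{n-1}$ by spreading, so each of their $c_k$-sums is bounded by $\e$; the discarded extremes contribute at most $\max_k c_k<\e$ (since $\{k\}\in\mathcal{S}_0\subseteq\mathcal{S}_{n-1}$). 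Thus $\sum_q\sum_{k\in F_q^{\mathrm{bd}}}c_k<3\e$. Combining with the interior bound,
\begin{equation*}
\sum_{q=1}^d\sum_{k\in F_q}c_k<4\e,
\end{equation*}
and since $\e<\frac{1}{36C2^{3n}}$ we conclude $C2^n\sum_q\sum_{k\in F_q}c_k<\frac{4}{36\cdot 2^{2n}}<\frac{1}{2^n}$, as required.

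\emph{Main obstacle.} The Schreier arithmetic of the interior step (Step~3) is essentially automatic once one notices $\mathcal{S}_j*\mathcal{S}_2=\mathcal{S}_{j+2}$ and uses $j\leqslant n-3$. The real work is Step~4: one must avoid any $d$-factor blow-up coming from straddling $x_k$'s, by selecting single representatives per $\be_q$ whose min supports form an $\mathcal{S}_j$-set, exactly as is done with the $k_q$'s in the $J_2$-estimate of Lemma~\ref{lem3.7}.
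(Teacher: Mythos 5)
Your overall strategy differs from the paper's: you apply Lemma \ref{lem3.14} termwise to \emph{every} $\be_q$ and then try to control the aggregate of the error terms $C2^n\sum_{k\in F_q}c_k$, whereas the paper first splits the index set into $J_1=\{q:\be_q$ meets at most one $x_k\}$ and $J_2=\{q:\be_q$ meets at least two$\}$, uses Lemma \ref{lem3.14} only on $J_2$, and handles $J_1$ by a completely different mechanism. This difference is not cosmetic: your version has a genuine gap precisely at the boundary step that you yourself flag as ``the real work.'' The problem is multiplicity. If a single block $x_k$ contains the ranges of $r$ consecutive averages $\be_q$, then for each such $q$ Lemma \ref{lem3.14} only guarantees \emph{some} $F_q\subseteq\{k\}$, and in general $F_q=\{k\}$, so the error term $C2^nc_k$ is incurred once per $q$; the aggregate is $rC2^nc_k$. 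The number $r$ is limited only by the $\mathcal{S}_j$-admissibility of $\{\be_q\}_q$, so it can be of order $\min\supp x_1\geqslant 8C2^{2n}$ already for $j=1$, and then $rC2^nc_k$ can be of order a constant rather than $2^{-n}$ (take $c_k$ comparable to $\e$). Your justification for the bound $\sum_q\sum_{k\in F_q^{\mathrm{bd}}}c_k<3\e$ --- that the min supports of the representatives form an $\mathcal{S}_j$-set after discarding an extreme element --- only controls the sum over the \emph{set} of distinct representatives; the quantity you actually need to bound is the multiplicity-weighted sum over $q$, and the s.c.c.\ property says nothing about that. The analogy with the $J_2$-estimate in Lemma \ref{lem3.7} does not rescue this, because there each average in $J_2$ meets at least two blocks, which caps the multiplicity of any representative at $2$; the dangerous averages are exactly those meeting only one block, and your decomposition lumps them into the boundary part without neutralizing them.

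The paper's way around this is the point you are missing: for $q\in J_1$ one must \emph{not} sum termwise bounds at all. Since $\{\be_q\}_{q\in J_1}$ (suitably restricted and signed) is still a very fast growing $\mathcal{S}_j$-admissible family, $\frac{1}{2^j}\sum_{q\in J_1}\pm\be_q$ is a single type I$_\be$ functional in $W$, so $\sum_{q\in J_1}|\be_q(x)|\leqslant 2^j\cdot 2^n\|\sum_{k\in G_1}c_kx_k\|$ with $G_1$ the set of blocks met by these averages; since $G_1$ minus its minimum lies in $\mathcal{S}_{j+1}\subseteq\mathcal{S}_{n-1}$, Corollary \ref{cor2.21} makes this a single small quantity, paying the factor $2^j\leqslant 2^{n-3}$ only once instead of once per $q$. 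Your interior step is fine and essentially coincides with the paper's treatment of $J_2$ (union of $\mathcal{S}_2$-sets along an $\mathcal{S}_j$-admissible family lands in $\mathcal{S}_{j+2}\subseteq\mathcal{S}_{n-1}$), but the argument as written does not prove the stated estimate without replacing the boundary step by the grouping argument above.
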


\begin{proof}
Set
\begin{eqnarray*}
J_1 &=& \{q:\;\text{there exists at most one}\; k\;\text{such that}\;\ran\be_q\cap\ran x_k\neq\varnothing\}\\
J_2 &=& \{1,\ldots,d\}\setminus J_2\\
G_1 &=& \{k:\;\text{there exists}\;q\in J_1\;\text{with}\;\ran\be_q\cap\ran x_k\neq\varnothing\}
\end{eqnarray*}

Then $\{\min\supp x_k: k\in G_1\setminus\{\min
G_1\}\}\in\mathcal{S}_{j+1}$ and it is easy to check that
\begin{equation}
\sum_{q\in J_1}^d|\be_q(x)| \leqslant 2^j2^n\|\sum_{k\in
G_1}c_kx_k\| < \frac{2}{9\cdot2^n} \label{lem3.15eq1}
\end{equation}
For $q\in J_2$, choose $F_q\subset\{1,\ldots,m\}$ as in Lemma
\ref{lem3.14} and set $F = \cup_{q\in J_2}F_q$. Then $\{\min\supp
x_k: k\in F\setminus\{\min F\}\}\in\mathcal{S}_{n-1}$, therefore
$\sum_{q\in J_2}\sum_{k\in F_q}c_k < \frac{1}{9C2^{3n}}$.

Lemma \ref{lem3.14} yields that
\begin{equation}
\sum_{q\in J_2}|\be_q(x)| < \sum_{q\in J_2}\frac{8C}{s(\be_q)}
+ \frac{1}{9\cdot2^{2n}} \label{lem3.15eq2}
\end{equation}
Combining \eqref{lem3.15eq1} and \eqref{lem3.15eq2}, the result
follows.

\end{proof}

\begin{prp} Let $\{x_k\}_k$ be a block sequence of $(C,\theta,n_k)$ exact vectors in $\X$ with $\{n_k\}_k$ strictly increasing. Then $\adx = 0$ as well as $\bdx = 0$.\label{cor3.16}
\end{prp}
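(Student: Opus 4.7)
The first claim that $\adx=0$ follows immediately from Proposition \ref{prop3.8}, since every $(C,\theta,n_k)$ exact vector is in particular a $(C,\theta,n_k)$ vector, so no new work is needed there. The substantive content is the verification of $\bdx=0$, and the plan is to establish this by applying the characterization in Proposition \ref{prop3.4} together with the quantitative estimate provided by Lemma \ref{lem3.15}.

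Given $\e>0$, I would first choose a threshold $j_0\inn$ that makes the geometric tail in Lemma \ref{lem3.15} small. The key point is that if $\{\be_q\}_{q=1}^d$ is very fast growing with $s(\be_q)>j_0$ for every $q$, then the condition $s(\be_{q+1})>2^{\max\supp \be_q}$ together with the fact that the supports of the $\be_q$ are successive forces the sizes $s(\be_q)$ to grow at least geometrically; in particular $s(\be_q)\geqslant 2^{q-1}$ for $q\geqslant 2$. Splitting the sum $\sum_q 8C/s(\be_q)$ at roughly $q\approx\log_2 j_0$, and using the bound $s(\be_q)>j_0$ below the threshold and $s(\be_q)\geqslant 2^{q-1}$ above it, yields a bound of order $(\log j_0)/j_0$, which can be made less than $\e/2$ by taking $j_0$ large.

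Next, for each $j\geqslant j_0$ I would pick $k_j\inn$ with $n_{k_j}\geqslant j+3$ and $1/2^{n_{k_j}}<\e/2$; both requirements can be met because $\{n_k\}_k$ is strictly increasing. Then for any $k\geqslant k_j$ and any very fast growing $\mathcal{S}_j$-admissible sequence $\{\be_q\}_{q=1}^d$ of $\be$-averages with $s(\be_q)>j_0$, the inequality $j\leqslant n_k-3$ makes Lemma \ref{lem3.15} applicable to the exact vector $x_k$, producing
\begin{equation*}
\sum_{q=1}^d|\be_q(x_k)| < \sum_{q=1}^d\frac{8C}{s(\be_q)} + \frac{1}{2^{n_k}} < \frac{\e}{2}+\frac{\e}{2}=\e.
\end{equation*}
Proposition \ref{prop3.4} then gives $\bdx=0$, concluding the argument.

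The only technical hurdle I anticipate is the geometric-growth estimate for $s(\be_q)$ described above: one must exploit both the very fast growing hypothesis and the uniform lower bound $s(\be_q)>j_0$ simultaneously, since neither condition alone yields a summable estimate. Once this bookkeeping is in place, the conclusion follows mechanically from Lemma \ref{lem3.15}, the fact that $n_k\to\infty$, and Proposition \ref{prop3.4}.
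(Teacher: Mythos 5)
Your proposal is correct and follows essentially the same route as the paper: $\adx=0$ is quoted from Proposition \ref{prop3.8}, and $\bdx=0$ is obtained by feeding Lemma \ref{lem3.15} into the criterion of Proposition \ref{prop3.4}, with $j_0$ controlling the $\sum_q 8C/s(\be_q)$ term and $k_j$ chosen so that $n_{k_j}\geqslant j+3$ and $2^{-n_{k_j}}$ is small. The only (harmless) divergence is in how the tail of $\sum_q 8C/s(\be_q)$ is summed: the paper keeps $8C/s(\be_1)<\e/4$ and bounds the remaining terms by $\sum_{i>\min\supp x_k}8C/2^i$ using $s(\be_q)>2^{\max\supp\be_{q-1}}\geqslant 2^{\min\supp x_k}$, whereas you use the position-independent bound $s(\be_q)>2^{q-1}$ and a split at $q\approx\log_2 j_0$ to get an $O((\log j_0)/j_0)$ estimate uniform in $k$ --- both are valid.
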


\begin{proof}
Proposition \ref{prop3.8} yields that $\adx = 0$. To prove that
$\bdx = 0$, we shall make use of Proposition \ref{prop3.4}. Let
$\e>0$ and choose $j_0\inn$ such that
\begin{equation*}
\frac{8C}{j_0} < \frac{\e}{4}
\end{equation*}
For $j\geqslant j_0$ choose $k_j$, such that $n_{k_j} \geqslant
j + 3$ and $\frac{1}{2^{n_{k_j}}}<\frac{\e}{4}$. For
$k\geqslant k_j$, Lemma \ref{lem3.15} yields that if
$\{\be_q\}_{q=1}^d$ is a very fast growing and
$\mathcal{S}_j$-admissible sequence of $\be$-averages and
$s(\be_q)>j_0$, for $q=1,\ldots,d$, we have that
\begin{equation*}
\sum_{q=1}^d|\be_q(x_k)| < \sum_{q=1}\frac{8C}{s(\be_q)} + \frac{1}{2^{n_k}} < \frac{\e}{4} + \sum_{j>\min\supp x_k}\frac{8C}{2^j} + \frac{\e}{4} < \e
\end{equation*}
\end{proof}

\begin{cor} Let $\{x_k\}_{k\inn}$ be a normalized block sequence in $\X$. Then
there exists a further normalized block sequence $\{y_k\}_{k\inn}$
of $\{x_k\}_{k\inn}$, such that $\ady = 0$ as well as $\bdy =
0$.\label{cor3.17}
\end{cor}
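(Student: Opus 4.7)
The plan is to reduce $\{x_k\}_k$ in two phases to a block sequence of exact vectors and then invoke Proposition \ref{cor3.16} to obtain vanishing of both indices. If it happens that $\adx = 0$ and $\bdx = 0$ already, I would simply set $y_k = x_k/\|x_k\|$; otherwise I proceed through the two phases below.

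\textbf{Phase 1 (forcing the $\al$-index to zero).} If $\adx > 0$, I iterate Proposition \ref{prop3.5}: for each $n \inn$ extract, from the tail of $\{x_k\}$, a finite block forming a $(C,\theta,n)$ vector $u_n$, where $C = \sup_k\|x_k\|$ and $\theta$ is the constant supplied by Proposition \ref{prop3.5}. Arranging $u_1 < u_2 < \cdots$ with strictly increasing parameters $n$ yields a block sequence $\{u_n\}_n$ of $(C,\theta,n)$ vectors, to which Proposition \ref{prop3.8} applies and delivers $\al(\{u_n\}_n) = 0$. If $\adx = 0$ to begin with, I simply take $u_k = x_k$.

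\textbf{Phase 2 (forcing the $\be$-index to zero while preserving $\al$).} The sequence $\{u_k\}_k$ now has zero $\al$-index, so Proposition \ref{prpalzeroalris} (or Proposition \ref{rem3.12}, when $\{u_k\}$ was produced in Phase 1) permits passage to a subsequence, still written $\{u_k\}_k$, which is $(C',\{n_k\}_k)$\;$\al$-RIS. If $\be(\{u_k\}_k) = 0$ then normalizing finishes the proof. Otherwise, I invoke the exact-vector form of Proposition \ref{prop3.5} to build, for each $m$, a $(C'',\theta',m)$ exact vector $y_m$ from a finite block of $\{u_k\}$, arranged so that $y_1 < y_2 < \cdots$ with strictly increasing parameters $m$. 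Proposition \ref{cor3.16} then gives $\ady = \bdy = 0$, and since $\theta' \leqslant \|y_m\| < 7C''$ by the definition of an exact vector together with Remark \ref{remexactest}, normalizing each $y_m$ produces the required sequence.

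The main obstacle is not any new estimate --- all the heavy lifting is packaged in Propositions \ref{prop3.8}, \ref{prpalzeroalris} and \ref{cor3.16} --- but rather the organizational task of tracking the constants $C, C', C''$ and $\theta, \theta'$ across the two reductions and checking that each constructed sequence lies in the block span of the preceding one. This last point is automatic, because every $(C,\theta,n)$ (exact) vector is by construction a finite block of its underlying sequence, so the final $\{y_m\}_m$ is a normalized block sequence of the original $\{x_k\}_k$ as required.
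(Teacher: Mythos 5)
Your proof is correct and follows essentially the same route as the paper: pass to $(C,\theta,n_k)$ vectors via Proposition \ref{prop3.5} and kill the $\al$-index with Proposition \ref{prop3.8}, pass to an $\al$-RIS subsequence via Proposition \ref{prpalzeroalris} or \ref{rem3.12}, and if the $\be$-index survives, pass to exact vectors and invoke Proposition \ref{cor3.16}. The only difference is the explicit two-phase bookkeeping (the paper merges the case $\adx=0$, $\bdx>0$ into a single application of Proposition \ref{prop3.5}), which is immaterial.
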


\begin{proof}
If $\adx = 0$ and $\bdx = 0$, then there is nothing to prove.
Otherwise, if $\adx > 0$ or $\bdx > 0$, apply Proposition
\ref{prop3.5} to construct a block sequence
$\{z_k\}_{k\inn}$ of $(1,\theta,n_k)$ vectors, with $\{n_k\}_k$ strictly increasing.  Then by Proposition \ref{prop3.8} $\adz = 0$ and Proposition \ref{rem3.12} yields,
that passing, if necessary, to a subsequence, we have that
$\{z_k\}_{k\inn}$ is $(7,\{n_k\}_k)$\;$\al$-RIS.

If $\bdz = 0$, set $y_k = \frac{1}{\|z_k\|}z_k$ and
$\{y_k\}_{k\inn}$ is the desired sequence.

Otherwise, if $\bdz > 0$, apply once more Proposition
\ref{prop3.5} to construct a block sequence
$\{w_k\}_{k\inn}$, of $(7,\theta^\prime,m_k)$ exact vectors, with $\{m_k\}_k$ strictly increasing. Proposition \ref{cor3.16} yields that $\adw = 0$, as well as $\bdw = 0$. Set $y_k = \frac{1}{\|w_k\|}w_k$ and
$\{y_k\}_{k\inn}$ is the desired sequence.

\end{proof}

\section{$c_0$ spreading models}

This section is devoted to necessary conditions for a sequence
$\{x_k\}_k$ to generate a $c_0$ spreading model. At the beginning
a Ramsey type result is proved concerning type II functionals
acting on a block sequence $\{x_k\}_k$ with $\bdx = 0$. Then
conditions are provided for a finite sequence to be equivalent to
the basis of $\ell_\infty^n$. This is critical for establishing
the HI property and the properties of the operators in the space.
Moreover it is shown that any block sequence $\{x_k\}_k$ with
$\adx = 0$ and $\bdx = 0$ contains a subsequence generating a
$c_0$ spreading model. Another critical property related to
sequences generating $c_0$ spreading models is that increasing
Schreier sums of them define $\al$-RIS sequences.

\subsection*{Evaluation of type II functionals on $\{x_k\}_k$ with $\bdx = 0$}

\begin{dfn}
Let $x_1<x_2<x_3$ be vectors in $\X$, $f =
\frac{1}{2}\sum_{j=1}^df_j$ be a functional of type II, such that
$\supp f\cap\ran x_i\neq\varnothing$, for $i=1,2,3$ and $j_0 =
\min\{j: \ran f_j\cap \ran x_2\neq\varnothing\}$. If $\ran
f_{j_0}\cap\ran x_3 = \varnothing$, then we say that $f$ separates
$x_1,x_2,x_3$.
\end{dfn}

\begin{dfn}
Let $i,j\inn$. If there exists $f\in W$ a functional of type II,
such that $i,j\in\widehat{w}(f)$, then we say that $i$ is
compatible to $j$.
\end{dfn}

\begin{lem}
Let $x_1 < x_2 <\cdots<x_m$ be vectors in $\X$, such that there
exist $\e>0$ and $\{f_k\}_{k=2}^{m-1}$ functionals of type II
satisfying the following.
\begin{enumerate}

\item[(i)] $f_k$ separates $x_1,x_k,x_m$, for $k=2,\ldots,m-1$

\item[(ii)] If $f_k = \frac{1}{2}\sum_{j=1}^{d_k}f_j^k$ and $j_k =
\min\{j: \ran f_j^k\cap\ran x_k\neq\varnothing\}$, then
$w(f_{j_k}^k)$ is not compatible to $w(f_{j_\ell}^\ell)$ for
$k\neq\ell$.

\item[(iii)] $|f_k(x_m)| > \e$ for $k=2,\ldots,m-1$

\end{enumerate}

Then there exists a $\be$-average $\be$ in $W$ of size $s(\be) =
m-2$ such that $\be(x_m) > \e$.\label{lem4.3}
\end{lem}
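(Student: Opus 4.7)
The plan is to form the desired $\be$-average as $\be = \frac{1}{m-2}\sum_{k=2}^{m-1} g_k$, where each $g_k$ is obtained from $f_k$ by restricting to an interval that contains $\ran x_m$ but excludes the critical term $f_{j_k}^k$. First, by replacing $f_k$ with $-f_k$ if necessary (both lie in $W$, and this preserves hypotheses (i) and (ii) as well as the weight sets), I may assume $f_k(x_m) > \e$ for every $k$.

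For the construction of $g_k$, I would exploit separation as follows. The hypothesis $\ran f_{j_k}^k \cap \ran x_m = \varnothing$, combined with the facts that $f_{j_k}^k$ meets $x_k$ and that $x_k < x_m$, forces $\max\supp f_{j_k}^k < \min\supp x_m$; the successive ordering $f_1^k < \cdots < f_{d_k}^k$ then propagates this bound to every $j \leq j_k$. Setting $I_k = (\max\supp f_{j_k}^k, \infty) \cap \mathbb{N}$ and $g_k = I_k f_k$, properties 1 and 4 of the norming set give that $g_k \in W$ is a type II functional with
\[
g_k = \frac{1}{2}\sum_{j > j_k} f_j^k, \qquad \widehat{w}(g_k) = \{w(f_j^k) : j > j_k\}, \qquad g_k(x_m) = f_k(x_m) > \e.
\]

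The main obstacle will be showing that the weight sets $\widehat{w}(g_k)$ are pairwise disjoint, which is what promotes $\frac{1}{m-2}\sum_k g_k$ to a legitimate $\be$-average. Suppose instead that $w(f_j^k) = w(f_{j'}^\ell)$ for some $k \neq \ell$, $j > j_k$, and $j' > j_\ell$. Then $j, j' \geq 2$, the common weight lies in $L_2$, and it equals $\sigma$ applied to the prefixes $\bigl((f_i^k, w(f_i^k))\bigr)_{i<j}$ and $\bigl((f_i^\ell, w(f_i^\ell))\bigr)_{i<j'}$. Injectivity of $\sigma$ forces these two prefixes to coincide as tuples, so $j = j'$ and $(f_i^k, w(f_i^k)) = (f_i^\ell, w(f_i^\ell))$ for all $i \leq j-1$. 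Since $j_k, j_\ell \leq j-1$, both $w(f_{j_k}^k)$ and $w(f_{j_\ell}^\ell) = w(f_{j_\ell}^k)$ now belong to $\widehat{w}(f_k)$, contradicting the incompatibility asserted in (ii).

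Once disjointness is in hand, the assembly is immediate: $\be = \frac{1}{m-2}\sum_{k=2}^{m-1} g_k$ is a $\be$-average of size $s(\be) = m-2$ in $W$, and
\[
\be(x_m) = \frac{1}{m-2}\sum_{k=2}^{m-1} g_k(x_m) > \e,
\]
as required. The delicate point throughout is that a single-coordinate incompatibility of the distinguished weights $w(f_{j_k}^k)$, together with the rigid tree-structure imposed by $\sigma$, is enough to guarantee full disjointness of the tails $\widehat{w}(g_k)$.
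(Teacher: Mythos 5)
Your proof is correct and follows essentially the same route as the paper's: truncate each $f_k$ to a type II functional $g_k$ that still gives $g_k(x_m)=f_k(x_m)>\e$, use the injectivity of the coding function $\sigma$ together with hypothesis (ii) to force the weight sets $\widehat{w}(g_k)$ to be pairwise disjoint, and take the average $\frac{1}{m-2}\sum_k g_k$. The only cosmetic differences are that the paper restricts each $f_k$ to $\ran x_m$ rather than to the tail beyond $f^k_{j_k}$, and leaves the $\sigma$-injectivity step implicit (``by the way type II functionals are constructed'').
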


\begin{proof}
Set $g_k = \sgn(f_k(x_m))f_k|_{\ran x_m}$, for $k=2,\ldots,m-1$.
Then $g_k$ is a functional of type II in $W$. We will show that
the $g_k$ have disjoint weights $\widehat{w}(g_k)$.

Towards a contradiction, suppose that there exist $2\leqslant k <
\ell\leqslant m-1$ and
$i\in\widehat{w}(g_k)\cap\widehat{w}(g_\ell)$. By (i) and the way
type II functionals are constructed, it follows that
$f_k|_{[\min\supp x_2,\ldots,\max\supp f^\ell_{j_\ell}]} =
\substack{+\\[-2pt]-}f_\ell|_{[\min\supp x_2,\ldots,\max\supp f^\ell_{j_\ell}]}$. This
contradicts (ii).

By the above, it follows that if we set $\be =
\frac{1}{m-2}\sum_{k=2}^{m-1}g_k$, then $\be$ is the desired
$\be$-average.

\end{proof}

\begin{lem}
Let $x_1 < x_2 <\cdots<x_m$ be vectors in $\X$, such that there
exist $\e>0$ and $\{f_k\}_{k=2}^{m-1}$ functionals of type II
satisfying the following.
\begin{enumerate}

\item[(i)] $f_k$ separates $x_1,x_k,x_m$, for $k=2,\ldots,m-1$

\item[(ii)] If $f_k = \frac{1}{2}\sum_{j=1}^{d_k}f_j^k$ and $j_k =
\min\{j: \ran f_j^k\cap\ran x_k\neq\varnothing\}$, then
$w(f_{j_k}^k) = w(f_{j_\ell}^\ell)$ for $k\neq\ell$.

\item[(iii)] If $j_k^\prime = \min\{j: \ran f_j^k\cap\ran
x_m\neq\varnothing\}$, then $w(f_{j_k^\prime}^k) \neq
w(f_{j_\ell^\prime}^\ell)$ for $k\neq\ell$.

\item[(iv)] $|f_k(x_m)| > \e$ for $k=2,\ldots,m-1$

\end{enumerate}

Then there exists a $\be$-average $\be$ in $W$ of size $s(\be) =
m-2$ such that $\be(x_m) > \e$.\label{lem4.4}
\end{lem}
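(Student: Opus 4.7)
The plan is to mimic the construction from the proof of Lemma~\ref{lem4.3}: set $g_k = \sgn(f_k(x_m))\,f_k|_{\ran x_m}$ for $k = 2, \ldots, m-1$, and take $\be = \frac{1}{m-2}\sum_{k=2}^{m-1} g_k$. Each $g_k$ is a type II functional in $W$, and hypothesis (iv) immediately gives
\[ \be(x_m) = \frac{1}{m-2}\sum_{k=2}^{m-1}|f_k(x_m)| > \e. \]
Thus the only substantive content is to verify that $\{\widehat{w}(g_k)\}_{k=2}^{m-1}$ is a pairwise disjoint family, because this is exactly what is required for $\be$ to belong to $W$ as a $\be$-average of size $s(\be) = m-2$.

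To prove disjointness, suppose toward a contradiction that $\widehat{w}(g_k) \cap \widehat{w}(g_\ell) \neq \varnothing$ for some $2 \leqslant k < \ell \leqslant m-1$, and pick a common weight $i$. By the definition of $\widehat{w}$ on a restriction, there are indices $a$ and $b$ with $w(f_a^k) = w(f_b^\ell) = i$ and with both $\ran f_a^k$ and $\ran f_b^\ell$ meeting $\ran x_m$. I plan to derive a contradiction with (iii) by showing that in fact $w(f_{j_k'}^k) = w(f_{j_\ell'}^\ell)$. Split by whether $i \in L_1$ or $i \in L_2$. In the $L_1$ case, only the leading sub-functional of a type II functional can carry a weight in $L_1$, so $a = b = 1$; hence $j_k' = j_\ell' = 1$, which directly gives $w(f_{j_k'}^k) = i = w(f_{j_\ell'}^\ell)$.

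In the $L_2$ case, the injectivity of the coding function $\sigma$ yields $a = b$ together with $(f_j^k, w(f_j^k)) = (f_j^\ell, w(f_j^\ell))$ for every $j < a$; in particular $\ran f_j^k = \ran f_j^\ell$ for all such $j$. Combined with the fact that $\ran f_a^k$ and $\ran f_a^\ell$ both meet $\ran x_m$, this forces $j_k' = j_\ell'$, and once again $w(f_{j_k'}^k) = w(f_{j_\ell'}^\ell)$, contradicting (iii). The main technical obstacle is precisely the bookkeeping in this $L_2$ case: extracting from the single equality $w(f_a^k) = w(f_b^\ell)$ the full agreement of the initial segments of the two type II constructions via the uniqueness guaranteed by $\sigma$, and then aligning this with the minimal indices $j_k'$ and $j_\ell'$ singled out by (iii). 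The separation hypothesis (i) plays the supporting role of guaranteeing $j_k < j_k'$, so the data distinguished by (ii) and (iii) genuinely refer to non-overlapping portions of each $f_k$; with these pieces in place the disjointness of the $\widehat{w}(g_k)$'s follows and $\be$ is the required $\be$-average.
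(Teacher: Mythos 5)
Your proposal is correct and takes essentially the same route as the paper: you form the same functionals $g_k = \sgn(f_k(x_m))\,f_k|_{\ran x_m}$, prove that the sets $\widehat{w}(g_k)$ are pairwise disjoint by deriving a contradiction with hypothesis (iii), and take the same average $\be = \frac{1}{m-2}\sum_{k=2}^{m-1}g_k$. Your case split on $i\in L_1$ versus $i\in L_2$ together with the injectivity of $\sigma$ spells out the step the paper compresses into a single sentence, and it is worth noting that your version of the argument never actually invokes hypothesis (ii), which the paper cites.
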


\begin{proof}
As before, set $g_k = \sgn(f_k(x_m))f_k|_{\ran x_m}$, for
$k=2,\ldots,m-1$. Then $g_k$ is a functional of type II in $W$. We
will show that the $g_k$ have disjoint weights $\widehat{w}(g_k)$.

Suppose that there exist $2\leqslant k < \ell\leqslant m-1$ and
$i\in\widehat{w}(g_k)\cap\widehat{w}(g_\ell)$. By (i), (ii) and
the way type II functionals are constructed, it follows that
\begin{equation*}
f_k|_{[\min\supp x_2,\ldots,\min\supp x_{m}]} = \substack{+\\[-2pt]-}f_\ell|_{[\min\supp x_2,\ldots,\min\supp x_{m}]}
\end{equation*}
This leaves us no choice, but to conclude that
$w(f_{j_k^\prime}^k) = w(f_{j_\ell^\prime}^\ell)$, a
contradiction.

It follows that if we set $\be =
\frac{1}{m-2}\sum_{k=2}^{m-1}g_k$, then $\be$ is the desired
$\be$-average.

\end{proof}

\begin{prp}
Let $\{x_k\}_{k\inn}$ be a bounded block sequence in $\X$, such
that $\bdx = 0$. Then for any $\e>0$, there exists $M$ an infinite
subset of the naturals, such that for any $k_1 < k_2 < k_3\in M$,
for any functional $f\in W$ of type II that separates
$x_{k_1},x_{k_2},x_{k_3}$, we have that $|f(x_{k_i})| < \e$, for
some $i\in\{1,2,3\}$.\label{prop4.5}
\end{prp}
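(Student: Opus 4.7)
The plan is a proof by contradiction. Fix $\e>0$ for which no such $M$ exists. By Ramsey's theorem applied to triples, after passing to a subsequence and relabeling we may assume that for every $k_1<k_2<k_3$ there is a type II functional $f_{k_1,k_2,k_3}$ separating $x_{k_1},x_{k_2},x_{k_3}$ with $|f_{k_1,k_2,k_3}(x_{k_i})|\geq\e$ for all $i$. The goal is to construct, for arbitrarily large $m$, a single $\be$-average $\be_m\in W$ of size $s(\be_m)=m-2$ with $|\be_m(x_{k_m})|>\e$ and $k_m\to\infty$; this contradicts $\bdx=0$ via the one-term ($d=1$) case of Proposition \ref{prop3.4}.

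Fix $m\geq 4$. Pick the first index $k_1$ in the refined subsequence, and for each pair of indices $n<k_m$ larger than $k_1$ write $f^n=f_{k_1,n,k_m}=\frac{1}{2}\sum_j f^n_j$. Let $j_n$ be the least $j$ with $\ran f^n_j\cap\ran x_n\neq\varnothing$, and $j'_n$ the least $j$ with $\ran f^n_j\cap\ran x_{k_m}\neq\varnothing$; separation forces $j_n<j'_n$. Set $w_n:=w(f^n_{j_n})$ and $w'_n:=w(f^n_{j'_n})$. An iterated pigeonhole argument, varying $n$ and $k_m$, should place us in one of two configurations among $m-2$ of the intermediate indices: either (Case A) the $w_n$'s are pairwise non-compatible, or (Case B) the $w_n$'s are constant while the $w'_n$'s are pairwise distinct. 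Lemma \ref{lem4.3} (respectively Lemma \ref{lem4.4}) then supplies the required $\be_m$ of size $m-2$ with $|\be_m(x_{k_m})|>\e$.

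The combinatorial heart of the proof is ruling out all other cases. If the pigeonhole on $w_n$ singles out a value $w_0$ taken infinitely often and, within those indices, a value $w_1$ taken infinitely often by $w'_n$, then by the injectivity of the coding function $\sigma$ the prefix $(f^n_1,\ldots,f^n_{j'_n-1})$ is uniquely determined by the value $w_1$; hence this prefix is identical for infinitely many $n$, yet the functional $f^n_{j_n}$ lying in this prefix must meet the pairwise disjoint ranges $\ran x_n$, a contradiction. In the branch where $w_n$ takes infinitely many distinct values, Ramsey on pairs applied to the compatibility graph either supplies an infinite pairwise non-compatible subset (delivering Case A), or an infinite pairwise compatible one; the latter is ruled out because, for any strictly increasing sequence $v_1<\cdots<v_r$ of pairwise compatible weights, every $v_i$ with $i<r$ must appear at a fixed position inside the unique prefix $\sigma^{-1}(v_r)$, which has fixed finite length, bounding $r$. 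The main obstacle is the combinatorial bookkeeping needed to realize Case A or Case B for arbitrarily large $m$ with $k_m\to\infty$; once this is arranged, Lemmas \ref{lem4.3} and \ref{lem4.4} close the proof through the contradiction with $\bdx=0$.
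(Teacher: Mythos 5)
Your overall strategy matches the paper's: Ramsey standardization, a case analysis on the weights of the first components hitting the middle and the top vectors, Lemmas \ref{lem4.3} and \ref{lem4.4} to manufacture large $\be$-averages contradicting $\bdx=0$, and the coding function to kill long chains of compatible weights. There is, however, a genuine gap in the case analysis. Lemma \ref{lem4.4} needs, for a \emph{fixed} top vector $x_{k_m}$, that the weights $w'_n$ of the first components hitting $x_{k_m}$ be pairwise distinct as the middle index $n$ varies; the complementary case is therefore that for each top index the $w'_{n}$ all coincide with a value $w_1(k_m)$ which \emph{depends on the top index}. Your exclusion argument (the prefix $\sigma^{-1}(w_1)$ is one fixed finite tuple, whose component $f^n_{j_n}$ cannot meet infinitely many disjoint sets $\ran x_n$) only applies when the same $w_1$ recurs for infinitely many $n$. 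In the remaining case the prefix varies with the top index, and in fact it must: by injectivity of $\sigma$, the weight-$w_0$ component $g$ inside $\sigma^{-1}(w_1(k_m))$ is the first component hitting $x_n$ for \emph{every} intermediate $n$, so $\ran g$ meets all of these ranges, $\max\supp g\to\infty$ as the top index grows, and the values $w_1(k_m)$ are eventually pairwise distinct. Thus the sub-case you dispose of is essentially the one that cannot persist, while the substantive one is untreated; neither Lemma \ref{lem4.3} nor Lemma \ref{lem4.4} applies to it. The paper closes it by noting that this common component $g=g_{k_m}$ satisfies $|g_{k_m}(x_n)|>2\e$ for all intermediate $n$ and then taking a $w^*$-cluster point of $\{g_{k_m}\}_m$, which contradicts the shrinking basis (Corollary \ref{cor2.22}); nothing of this kind appears in your proposal.

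A secondary imprecision: in ruling out an infinite pairwise compatible set of weights, the tuple $\sigma^{-1}(v_r)$ does not have length independent of $r$ (it grows along the branch), so ``fixed finite length, bounding $r$'' does not follow as written. The correct point is that all the $v_i$ lie on one branch whose first element $f_1$ is determined by any $\sigma^{-1}(v_j)$, and $\mathcal{S}_1$-admissibility of a special sequence witnessing compatibility with $v_r$ forces the position of $v_r$ to be at most $\min\supp f_1$, a fixed bound; the paper instead anchors the branch at $x_1$, using $|f(x_1)|>\e$ to bound and then stabilize the weight of the component meeting $x_1$, which yields the bound $\max\supp x_1$. This part is repairable; the missing $w^*$-limit case is the real defect.
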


\begin{proof}
Suppose that this is not the case. Then by using Ramsey theorem
\cite{Ra}, we may assume that there exists $\e>0$ such that for
any $k < \ell < m\inn$, we have that there exists $f_{k,\ell,m}$ a
functional of type II, that separates $x_k, x_\ell, x_m$ and
$|f_{k,\ell,m}(x_k)| > \e, |f_{k,\ell,m}(x_\ell)| > \e$ and
$|f_{k,\ell,m}(x_m)| > \e$.

For $1<k<m$, if $f_{1,k,m} = \frac{1}{2}\sum_{j=1}^{d_{k,m}}f_j^{k,m}$, set
\begin{eqnarray*}
i_{k,m} &=& \min\{j: \ran f_j^{k,m}\cap\ran x_1\neq\varnothing\}\\
j_{k,m} &=& \min\{j: \ran f_j^{k,m}\cap\ran x_k\neq\varnothing\}\\
j_{k,m}^\prime &=& \min\{j: \ran f_j^{k,m}\cap\ran x_m\neq\varnothing\}
\end{eqnarray*}

Notice, that for $1<k<m$, since $|f_{1,k,m}(x_1)| > \e$, it follows that
\begin{equation*}
\frac{1}{2^{w(f^{k,m}_{i_{k,m}})}} > \frac{\e}{\|x_1\|\max\supp x_1}
\end{equation*}

By applying Ramsey theorem once more, we may assume that there
exists $n_1\inn$, such that for any $1<k<m$, we have that
$w(f^{k,m}_{i_{k,m}}) = n_1$

Arguing in the same way and diagonalizing, we may assume that for
any $k>1$, there exists $n_k\inn$ such that for any $m>k$, we have
that $w(f^{k,m}_{j_{k,m}}) = n_k$. Set
\begin{eqnarray*}
A_1 &=& \big\{\{k,\ell\}\in[\mathbb{N}\setminus\{1\}]^2: n_k\neq n_\ell\;\text{and}\;n_k\;\text{is compatible to}\;n_\ell\big\}\\
A_2 &=& \big\{\{k,\ell\}\in[\mathbb{N}\setminus\{1\}]^2: n_k\neq n_\ell\;\text{and}\;n_k\;\text{is not compatible to}\;n_\ell\big\}\\
A_3 &=& \big\{\{k,\ell\}\in[\mathbb{N}\setminus\{1\}]^2: n_k = n_\ell\big\}
\end{eqnarray*}
Once more, Ramsey theorem yields that there exists $M$ an infinite
subset of the naturals, such that $[M]^2\subset A_1, [M]^2\subset
A_2$, or $[M]^2\subset A_3$.

Assume that $[M]^2\subset A_1$ and for convenience assume that $M
= \mathbb{N}\setminus\{1\}$. Choose $k_0>1$ such that $k_0 >
\max\supp x_1$. Since $n_1$ is compatible to $n_2$ and in general
$n_{k-1}$ is compatible to $n_{k}$, for $k>1$, it follows that
there exists a functional $f = \frac{1}{2}\sum_{j=1}^df_j$ of type
II in $W$, such that $\ran f\cap \ran x_1\neq\varnothing$ and for
$k=1,\ldots,k_0$ there exists $j_k$, with $w(f_{j_k}) = n_k$, for
$k=1,\ldots,k_0$.

Since $\min\supp f_1\leqslant\max\supp x_1$ it follows that
$\{f_j\}_{j=1}^d$ can not be $\mathcal{S}_1$-admissible, a
contradiction.

Assume next that $[M]^2\subset A_2$. Lemma \ref{lem4.3} yields
that $\bdx > 0$ and since this cannot be, we conclude that
$[M]^2\subset A_3$, therefore there exists $n_0\inn$, such that
$n_k = n_0$, for any $k\in M$.

Assume once more that $M = \mathbb{N}\setminus\{1\}$ and set
\begin{equation*}
B = \big\{\{k,\ell,m\}\in[\mathbb{N}\setminus\{1\}]^3: w(f^{1,k,m}_{j_{k,m}^\prime}) = w(f^{1,\ell,m}_{j_{\ell,m}^\prime})\big\}
\end{equation*}

If there exists $M$ an infinite subset of the naturals, such that
$[M]^3\subset B^c$, Lemma \ref{lem4.4} yields that $\bdx > 0$,
therefore by one last Ramsey argument, there exists $M$ an
infinite subset of the naturals, such that $[M]^3\subset B$.

By the above, we conclude that for $m\geqslant4$, $\ran x_k\subset
\ran f^{2,m}_{j_{2,m}}$ and $|f^{2,m}_{j_{2,m}}(x_k)| > 2\e$, for
$k= 2,\ldots,m-2$.

Set $f_m = f^{2,m}_{j_{2,m}}$ and let $f$ be a $w^*$ limit of some
subsequence of $\{f_m\}_{m\inn}$. Then $|f(x_k)|\geqslant 2\e$,
for any $k\geqslant 2$. Corollary \ref{cor2.22} yields a
contradiction and this completes the proof.

\end{proof}

\begin{rmk}
The proof of Proposition \ref{prop4.5} is the only place where the
condition $\bdx = 0$ is needed. This makes necessary to introduce
the $\be$-averages and their use in the definition of the norm.
\end{rmk}

\subsection*{Finite sequences equivalent to $
\ell_\infty^n$ basis.}

\begin{prp} Let $x_1<\cdots<x_n$ be a seminormalized block sequence in $\X$,
such that $\|x_k\|\leqslant 1$ for $k=1,\ldots,n$ and there exist
$n+3\leqslant j_1 <\cdots< j_n$ strictly increasing naturals such
that the following are satisfied.
\begin{enumerate}

\item[(i)] For any $k_0\in\{1,\ldots,n\}$, for any $k\geqslant
k_0, k\in\{1,\ldots,n\}$, for any $\{\al_q\}_{q=1}^d$ very fast
growing and $\mathcal{S}_j$-admissible sequence of $\al$-averages,
with $j<j_{k_0}$ and $s(\al_1) > \min\supp x_{k_0}$, we have that
$\sum_{q=1}^d|\al_q(x_k)| < \frac{1}{n\cdot2^n}$.

\item[(ii)] For any $k_0\in\{1,\ldots,n\}$, for any $k\geqslant
k_0, k\in\{1,\ldots,n\}$, for any $\{\be_q\}_{q=1}^d$ very fast
growing and $\mathcal{S}_j$-admissible sequence of $\be$-averages,
with $j<j_{k_0}$ and $s(\be_1) > \min\supp x_{k_0}$, we have that
$\sum_{q=1}^d|\be_q(x_k)| < \frac{1}{n\cdot2^n}$.

\item[(iii)] For $k = 1,\ldots,n-1$, the following holds:
$\frac{1}{2^{j_{k+1}}}\max\supp x_k < \frac{1}{2^n}$.

\item[(iv)] For any $1\leqslant k_1 < k_2 < k_3\leqslant n$, for
any functional $f\in W$ of type II that separates
$x_{k_1},x_{k_2},x_{k_3}$, we have that $|f(x_{k_i})| <
\frac{1}{n\cdot2^n}$, for some $i\in\{1,2,3\}$.

\end{enumerate}

Then $\{x_k\}_{k=1}^n$ is equivalent to $\ell_\infty^n$ basis,
with an upper constant $3 + \frac{3}{2^n}$. Moreover, for any
functional $f\in W$ of type I$_\al$ with weight $w(f) = j < j_1$,
we have that $|f(\sum_{k=1}^nx_k)| < \frac{3 +
\frac{4}{2^n}}{2^j}$.\label{prop4.6}
\end{prp}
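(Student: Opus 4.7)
The plan is to estimate $|f(\sum_{k=1}^n c_k x_k)|$ for an arbitrary $f\in W$ by induction on the tree depth of $f$, proving the moreover clause in parallel since it supplies the base estimate for type $\mathrm{I}_\alpha$ nodes. Since $W$ is closed under rational convex combinations and the estimate is homogeneous, I reduce to $f$ of type $0$, $\mathrm{I}_\alpha$, $\mathrm{I}_\beta$, or $\mathrm{II}$ (the convex-combination case follows from the induction hypothesis), and take $\max_k|c_k|=1$.

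For the moreover clause, let $f=\frac{1}{2^j}\sum_{q=1}^d\alpha_q$ be of type $\mathrm{I}_\alpha$ with $j<j_1$. The very fast growing property allows at most one $\alpha_q$ (necessarily $\alpha_1$) to have $s(\alpha_1)\leq\min\supp x_1$. For this $\alpha_1\in W$, which has strictly smaller tree than $f$, the inductive main estimate gives $|\alpha_1(\sum_k x_k)|\leq 3+3/2^n$. The remaining averages form a very fast growing, $\mathcal{S}_j$-admissible sequence whose first term has size exceeding $\min\supp x_1$, so condition (i) with $k_0=1$ gives $\sum_{q\geq 2}|\alpha_q(x_k)|<1/(n\cdot 2^n)$ for each $k$, and hence $\sum_{q\geq 2}|\alpha_q(\sum_k x_k)|\leq 1/2^n$. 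Combining and dividing by $2^j$ yields the moreover bound $|f(\sum_k x_k)|<(3+4/2^n)/2^j$.

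For the main estimate, the inductive step splits by type of $f$. Type $0$ is trivial. For type $\mathrm{I}_\alpha$ of weight $w(f)=j$: if $j<j_1$, the moreover clause (plus $j\geq 1$) yields $(3+4/2^n)/2\leq 3+3/2^n$; if $j\geq j_1$, I locate the $k$ with $j_k\leq j<j_{k+1}$ and use condition (iii) to show the prefix $x_1,\ldots,x_{k-1}$ contributes at most $2^{-j}\sum_{\ell<k}\max\supp x_\ell<1/2^n$, while (i) with $k_0=k+1$ controls the suffix $x_{k+1},\ldots,x_n$, leaving $x_k$ itself bounded by $\|x_k\|\leq 1$. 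The type $\mathrm{I}_\beta$ case is parallel, using (ii). For type $\mathrm{II}$, $f=\frac{1}{2}\sum_{j=1}^d f_j$, condition (iv) ensures that the indices $k$ on which $|f(x_k)|\geq 1/(n\cdot 2^n)$ admit, after removing "unseparated" pairs (where $f|_{\ran x_k}$ collapses to the restriction of a single sub-functional $f_{j_k}$, handled by induction with a factor $1/2$), at most two elements by a Ramsey argument; the remaining contributions are negligible, and the total stays below $3+3/2^n$.

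The main obstacle is the type $\mathrm{II}$ case: one must carefully charge each $x_k$ either to a single inductively-handled sub-functional $f_{j_k}$ or to the Ramsey "small" estimate, avoiding double counting and keeping the total under $3+3/2^n$. Here conditions (iii) and (iv) work in tandem, with (iii) localising the action of high-weight type $\mathrm{I}$ functionals via support growth and (iv) pigeon-holing which vectors can interact substantially with a single type $\mathrm{II}$ functional. A further subtle point is justifying the inductive appeal to $\alpha_1$ in the moreover clause without circularity, which relies on $\alpha_1$ having strictly smaller tree depth than $f$.
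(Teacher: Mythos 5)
Your overall strategy coincides with the paper's: an induction over the generation of $W$, a case analysis by the type of $f$, with the ``moreover'' clause proved in parallel and feeding the type $\mathrm{I}_\alpha$ estimate. Your treatment of type $0$, type $\mathrm{I}_\alpha$, type $\mathrm{I}_\beta$ and the moreover clause is essentially the paper's argument. The gap is exactly where you place it, in the type $\mathrm{II}$ case, and it is a genuine one: the accounting you sketch does not close. Write $E=\{k:|f(x_k)|\geqslant\frac{1}{n2^n}\}$ and split $E$ into $E_1$ (those $k$ met by at least two components $f_j$) and $E\setminus E_1$. Hypothesis (iv) gives $\#E_1\leqslant 2$, and a \emph{second} application of (iv) --- which you do not mention --- shows that at most two distinct components $f_j$ meet the vectors indexed by $E\setminus E_1$. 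Even granting both of these, if each such $f_j$ is only known to satisfy the main inductive bound $3+\frac{3}{2^n}$, the singly-covered part already contributes $\frac{1}{2}\cdot 2\cdot\bigl(3+\frac{3}{2^n}\bigr)=3+\frac{3}{2^n}$, and adding the $E_1$ part (up to $2$) and the negligible part (up to $\frac{1}{2^n}$) gives roughly $5$, not $3+\frac{3}{2^n}$.

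The missing idea is a strengthened inductive hypothesis. The paper carries three statements through the induction simultaneously: the bound $3+\frac{3}{2^n}$ for all $f$; the bound $\frac{3+4/2^n}{2^j}$ for type $\mathrm{I}_\alpha$ functionals of weight $j<j_1$ (your moreover clause); and, crucially, the bound $1+\frac{2}{2^n}$ for type $\mathrm{I}_\alpha$ functionals of weight $w(f)\geqslant 2$. This last estimate follows from your own case analysis (for $j<j_1$ divide the moreover bound by $2^j\geqslant 4$; for $j\geqslant j_1$ the prefix/suffix decomposition gives $1+O(2^{-n})$), but it must be stated and carried as part of the induction. Since the components $f_j$ of a type $\mathrm{II}$ functional have weights $w(f_j)\in L$ and $\ell_1>2$, each $f_j$ has weight greater than $2$, so the sharp bound applies and the singly-covered part contributes at most $\frac{1}{2}\cdot 2\cdot\bigl(1+\frac{2}{2^n}\bigr)$; the total is then $1+\frac{2}{2^n}+2+\frac{1}{2^n}=3+\frac{3}{2^n}$. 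Without this intermediate estimate, the observation that the weights lie in $L$, and the bound $\#J\leqslant 2$ on the number of components involved in the singly-covered part, the constant cannot be reached, so as written the proof is incomplete at the step you yourself flag as the main obstacle.
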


\begin{proof}
By using Remark \ref{remark1.2}, we will inductively prove, that
for any $\{c_k\}_{k=1}^n\subset[-1,1]$ the following hold.
\begin{enumerate}

\item[(i)] For any $f\in W$, we have that $|f(\sum_{k=1}^nc_kx_k)|
< (3 + \frac{3}{2^n})\max\{|c_k|:k=1,\ldots,n\}$.

\item[(ii)] If $f$ is of type I$_\al$ and $w(f)\geqslant 2$, then
$|f(\sum_{k=1}^nc_kx_k)| < (1 +
\frac{2}{2^n})\max\{|c_k|:k=1,\ldots,n\}$.

\item[(iii)] If $f$ is of type I$_\al$ and $w(f) = j < j_1$, then
$|f(\sum_{k=1}^nc_kx_k)| < \frac{3 +
\frac{4}{2^n}}{2^j}\max\{|c_k|:k=1,\ldots,n\}$.

\end{enumerate}

For any functional $f\in W_0$ the inductive assumption holds.
Assume that it holds for any $f\in W_m$ and let $f\in W_{m+1}$. If
$f$ is a convex combination, then there is nothing to prove.

Assume that $f$ is of type I$_\al, f =
\frac{1}{2^j}\sum_{q=1}^d\al_q$, where $\{\al_q\}_{q=1}^d$ is a
very fast growing and $\mathcal{S}_j$-admissible sequence of
$\al$-averages in $W_m$.

Set $k_1 = \min\{k: \ran f\cap\ran x_k\neq\varnothing\}$ and $q_1
= \min\{q: \ran \al_q\cap\ran x_{k_1}\neq\varnothing\}$.

We distinguish 3 cases.\vskip3pt

\noindent {\em Case 1:} $j<j_1$.

For $q>q_1$, we have that $s(\al_q) > \min\supp x_{k_1}$,
therefore we conclude that
\begin{equation}
\sum_{q>q_1}|\al_q(\sum_{k=1}^nc_kx_k)| <
\frac{1}{2^n}\max\{|c_k|:k=1,\ldots,n\} \label{prop4.6eq1}
\end{equation}
while the inductive assumption yields that
\begin{equation}
|\al_{q_1}(\sum_{k=1}^nc_kx_k)| < (3 +
\frac{3}{2^n})\max\{|c_k|:k=1,\ldots,n\} \label{prop4.6eq2}
\end{equation}
Then \eqref{prop4.6eq1} and \eqref{prop4.6eq2} allow us to
conclude that
\begin{equation}
|f(\sum_{k=1}^nc_kx_k)| < \frac{3 +
\frac{4}{2^n}}{2^j}\max\{|c_k|:k=1,\ldots,n\}\label{prop4.6eqx1}
\end{equation}
Hence, (iii) from the inductive assumption is satisfied.\vskip3pt

\noindent {\em Case 2:} There exists $k_0 < n$, such that
$j_{k_0}\leqslant j < j_{k_0 + 1}$.

Arguing as previously we get that
\begin{equation}
|f(\sum_{k>k_0}c_kx_k)| < \frac{3 +
\frac{4}{2^n}}{2^{j_{k_0}}}\max\{|c_k|:k=1,\ldots,n\}
\label{prop4.6eq3}
\end{equation}
and
\begin{equation}
|f(\sum_{k<k_0}c_kx_k)| < \frac{1}{2^n}\max\{|c_k|:k=1,\ldots,n\}
\label{prop4.6eq4}
\end{equation}
Using \eqref{prop4.6eq3}, \eqref{prop4.6eq4}, the fact that
$|f(x_{k_0})| \leqslant 1$ and $j_{k_0}\geqslant n+3$, we conclude
that
\begin{equation}
|f(\sum_{k=1}^nc_kx_k)| < (1 +
\frac{2}{2^n})\max\{|c_k|:k=1,\ldots,n\}\label{prop4.6eqx2}
\end{equation}

\noindent {\em Case 3:} $j\geqslant j_n$

By using the same arguments, we conclude that
\begin{equation}
|f(\sum_{k=1}^nc_kx_k)| < (1 +
\frac{1}{2^n})\max\{|c_k|:k=1,\ldots,n\}\label{prop4.6eqx3}
\end{equation}

Then \eqref{prop4.6eqx1}, \eqref{prop4.6eqx2} and
\eqref{prop4.6eqx3} yield that (ii) from the inductive assumption
is satisfied.

If $f$ is of type I$_\be$, then the proof is exactly the same,
therefore assume that $f$ is of type II, $f =
\frac{1}{2}\sum_{j=1}^df_j$, where $\{f_j\}_{j=1}^d$ is an
$\mathcal{S}_1$-admissible sequence of functionals of type I$_\al$
in $W_m$. Set
\begin{eqnarray*}
E &=& \{k: |f(x_k)| \geqslant \frac{1}{n\cdot2^n}\}\\
E_1 &=& \{k\in E:\;\text{there exist at least two}\;j\;\text{such
that}\;\ran f_j\cap\ran x_k\neq\varnothing\}
\end{eqnarray*}
Then $\#E_1\leqslant 2$. Indeed, if $k_1<k_2<k_3\in E_1$, then $f$
separates $x_{k_1},x_{k_2}$ and $x_{k_3}$ which contradicts our
initial assumptions.

If moreover we set $J = \{j:$ there exists $k\in E\setminus E_1$
such that $\ran f_j\cap\ran x_k\neq\varnothing\}$, then for the
same reasons we get that $\#J\leqslant 2$.

Since for any $j$, we have that $w(f_j)\in L$, we get that
$w(f_j)>2$, therefore:
\begin{eqnarray}
|f(\sum_{k \in E\setminus E_1}^nc_kx_k)| &<& (1 + \frac{2}{2^n})\max\{|c_k|:k=1,\ldots,n\}\label{prop4.6eq5}\\
|f(\sum_{k \in  E_1}^nc_kx_k)| &\leqslant& 2 \max\{|c_k|:k=1,\ldots,n\}\label{prop4.6eq6}\\
|f(\sum_{k \notin  E}^nc_kx_k)| &\leqslant&
n\cdot\frac{1}{n\cdot2^n}\max\{|c_k|:k=1,\ldots,n\}\label{prop4.6eq7}
\end{eqnarray}
Finally, \eqref{prop4.6eq5} to \eqref{prop4.6eq7} yield the
following.
\begin{equation*}
|f(\sum_{k=1}^nc_kx_k)| < (3 + \frac{3}{2^n})\max\{|c_k|:k=1,\ldots,n\}
\end{equation*}
This means that (i) from the inductive assumption is satisfied an
this completes the proof.

\end{proof}

\subsection*{The spreading models of $\X$.}
In this subsection we show that every seminormalized block
sequence has a subsequence which generates either $\ell_1$ or
$c_0$ as a spreading model.

\begin{prp}
Let $\{x_k\}_{k\inn}$ be a seminormalized block sequence in $\X$,
such that $\|x_k\|\leqslant 1$ for all $k\inn$ and $\adx = 0$ as
well as $\bdx = 0$. Then it has a subsequence, again denoted by
$\{x_k\}_{k\inn}$ satisfying the following.
\begin{enumerate}

\item[(i)] $\{x_k\}_{k\inn}$ generates a $c_0$ spreading model.
More precisely, for any $n\leqslant k_1<\cdots<k_n$, we have that
$\|\sum_{i=1}^nx_{k_i}\| \leqslant 4$.

\item[(i)] There exists a strictly increasing sequence of naturals
$\{j_n\}_{n\inn}$, such that for any $n\leqslant k_1 < \cdots
<k_n$, for any functional $f$ of type I$_\al$ with $w(f) = j<j_n$,
we have that
    \begin{equation*}
    |f(\sum_{i=1}^nx_{k_i})| < \frac{4}{2^j}
    \end{equation*}
\end{enumerate}\label{cor4.7}
\end{prp}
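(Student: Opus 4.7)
The plan is to build a subsequence $\{x_{k_i}\}_{i\inn}$ and a strictly increasing sequence $\{j_n\}_{n\inn}$ so that, for every $n$ and every choice of indices $i_1<\cdots<i_n$ with $i_1\geqslant n$, the four hypotheses of Proposition \ref{prop4.6} are satisfied for the finite block $x_{k_{i_1}}<\cdots<x_{k_{i_n}}$, with $j_{i_1}<\cdots<j_{i_n}$ playing the role of $j_1<\cdots<j_n$ in that statement. Once this is achieved, Proposition \ref{prop4.6} directly yields $\|\sum_{\ell=1}^n x_{k_{i_\ell}}\|\leqslant 3+3/2^n$ and $|f(\sum_{\ell=1}^n x_{k_{i_\ell}})|<(3+4/2^n)/2^j$ for every type I$_\al$ functional $f$ of weight $j<j_n\leqslant j_{i_1}$, which give the claimed constants $4$ and $4/2^j$ after possibly discarding finitely many initial terms of the subsequence.

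First I would record the three main ingredients. Applying Proposition \ref{prop3.3} with $\e_n=1/(n2^n)$ produces a threshold $j_0^\al(n)$ and, for each $j\geqslant j_0^\al(n)$, an index $k_j^\al(n)$ beyond which the $\al$-average control of Proposition \ref{prop4.6}(i) is valid. Proposition \ref{prop3.4} yields the analogous $\be$-average thresholds $j_0^\be(n), k_j^\be(n)$. Proposition \ref{prop4.5}, applied with the same $\e_n$, produces an infinite set $M_n\subset\mathbb{N}$ on which every triple automatically satisfies the type II separation estimate of (iv); refining, I may assume $M_1\supset M_2\supset\cdots$.

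Next I would run a diagonal induction. At stage $n$, I pick $j_n$ strictly increasing with $j_n\geqslant n+3$ and $j_n>\max\{j_0^\al(n),j_0^\be(n)\}$; then choose $k_n\in M_n$ with $k_n>k_{n-1}$ and with $\min\supp x_{k_n}$ exceeding $\max\{j_0^\al(m),j_0^\be(m), k_j^\al(m), k_j^\be(m):m\leqslant n,\;j\leqslant j_n\}$. This ensures that the clauses ``$s(\al_1)>\min\supp x_{k_0}$'' and ``$k\geqslant k_0$'' in Proposition \ref{prop4.6}(i)--(ii) automatically trigger the tail bounds of Propositions \ref{prop3.3} and \ref{prop3.4}. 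Finally, once $k_n$ is fixed, I pick $j_{n+1}$ large enough that $2^{-j_{n+1}}\max\supp x_{k_n}<2^{-n}$, which secures condition (iii).

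Verification is then immediate. For any $n$ and $i_1<\cdots<i_n$ with $i_1\geqslant n$, the nesting $M_1\supset M_2\supset\cdots$ gives $k_{i_\ell}\in M_n$ for all $\ell$, so Proposition \ref{prop4.5} yields (iv); the choice of $\min\supp x_{k_n}$ together with Propositions \ref{prop3.3} and \ref{prop3.4} gives (i) and (ii); and (iii) was built in. The only real obstacle is the bookkeeping, namely coordinating the mutually dependent parameters $\{k_n\}$ and $\{j_n\}$ so that the various quantifiers of Proposition \ref{prop4.6} engage the $\e_n$-thresholds of Propositions \ref{prop3.3}--\ref{prop4.5} correctly; since each threshold depends only on $n$ and not on later choices, the alternating induction is well-defined and non-circular.
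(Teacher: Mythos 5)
Your proposal follows the same route as the paper's proof: diagonalize over Propositions \ref{prop3.3}, \ref{prop3.4} and \ref{prop4.5} to extract a subsequence together with thresholds $\{j_n\}_n$ meeting the four hypotheses of Proposition \ref{prop4.6}, and then read off the bounds $3+3/2^n\leqslant 4$ and $(3+4/2^n)/2^j\leqslant 4/2^j$. The only (harmless) slip is that for hypotheses (i)--(ii) of Proposition \ref{prop4.6} you need the index $k_n$ itself, and not merely $\min\supp x_{k_n}$, to exceed the thresholds $k_j^\al(m), k_j^\be(m)$ furnished by Propositions \ref{prop3.3}--\ref{prop3.4}; since you may choose $k_n$ arbitrarily far along $M_n$, requiring both costs nothing.
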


\begin{proof}
By repeatedly applying Proposition \ref{prop4.5} and
diagonalizing, we may assume that for any $n\leqslant k_1 < k_2 <
k_3$, for any functional $f$ of type II that separates
$x_{k_1},x_{k_2}$ and $x_{k_3}$, we have that $|f(x_{k_i})| <
\frac{1}{n\cdot2^n}$, for some $i\in\{1,2,3\}$.

Use Propositions \ref{prop3.3} and \ref{prop3.4} to inductively
choose a subsequence of $\{x_k\}_{k\inn}$, again denoted by
$\{x_k\}_{k\inn}$ and $\{j_k\}_{k\inn}$ a strictly increasing
sequence of naturals with $j_k\geqslant k+3$ for all $k\inn$, such
that the following are satisfied.
\begin{enumerate}

\item[(i)] For any $k_0\inn$, for any $k\geqslant k_0$, for any
$\{\al_q\}_{q=1}^d$ very fast growing and
$\mathcal{S}_j$-admissible sequence of $\al$-averages, with
$j<j_{k_0}$ and $s(\al_1) > \min\supp x_{k_0}$, we have that
$\sum_{q=1}^d|\al_q(x_k)| < \frac{1}{k_0\cdot2^{k_0}}$.

\item[(ii)] For any $k_0\inn$, for any $k\geqslant k_0$, for any
$\{\be_q\}_{q=1}^d$ very fast growing and
$\mathcal{S}_j$-admissible sequence of $\be$-averages, with
$j<j_{k_0}$ and $s(\be_1) > \min\supp x_{k_0}$, we have that
$\sum_{q=1}^d|\be_q(x_k)| < \frac{1}{k_0\cdot2^{k_0}}$.

\item[(iii)] For $k\inn$, the following holds:
$\frac{1}{2^{j_{k+1}}}\max\supp x_k < \frac{1}{2^k}$.
\end{enumerate}
It is easy to check that for $n\leqslant k_1<\cdots<k_n$, the
assumptions of Proposition \ref{prop4.6} are satisfied.

\end{proof}

Propositions \ref{prop3.5} and \ref{cor4.7} yield the following.

\begin{cor}
Let $\{x_k\}_{k\inn}$ be a normalized weakly null sequence in
$\X$. Then it has a subsequence that generates a spreading model
which is either equivalent to $c_0$, or to $\ell_1$.\label{cor4.8}
\end{cor}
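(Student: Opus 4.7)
My plan is to reduce to a normalized block sequence via the Bessaga--Pe\l czy\'nski selection principle, pass to a subsequence generating a spreading model $(e_i)_{i \inn}$ by the standard Brunel--Sucheston argument, and then split into two cases according to whether the $\al$- and $\be$-indices vanish. Since $\{x_k\}$ is weakly null, the spreading model is automatically $1$-unconditional, $1$-spreading, and normalized, which gives the trivial two-sided bound
\begin{equation*}
\max_i |c_i| \leqslant \|\sum_i c_i e_i\| \leqslant \sum_i |c_i|
\end{equation*}
for every finitely supported sequence of scalars.

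In the first case, when $\adx = 0$ and $\bdx = 0$, I would invoke Proposition \ref{cor4.7} to extract a further subsequence with $\|\sum_{i=1}^n x_{k_i}\| \leqslant 4$ whenever $n\leqslant k_1 <\cdots< k_n$. Passing to the spreading model would yield $\|\sum_{i=1}^n e_i\| \leqslant 4$ for every $n$, and $1$-unconditionality would then upgrade this to $\|\sum_i c_i e_i\| \leqslant 4\max_i |c_i|$. Combined with the trivial left-hand bound, $(e_i)$ is equivalent to the $c_0$ basis.

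In the second case, when $\adx > 0$ or $\bdx > 0$, Proposition \ref{prop3.5} produces $\theta > 0$ and a subsequence $\{x_{n_k}\}$ satisfying $\|\sum_{k\in F} c_k x_{n_k}\| \geqslant (\theta/2^n)\sum_{k\in F} |c_k|$ for every $F$ with $\{\min\supp x_{n_k} : k\in F\} \in \mathcal{S}_n$. The key observation would be to specialize to $n = 1$: for any fixed $m$ and sufficiently large $k_1 <\cdots< k_m$ the condition $m \leqslant \min\supp x_{n_{k_1}}$ holds automatically, hence $\|\sum_j c_j x_{n_{k_j}}\| \geqslant (\theta/2)\sum_j |c_j|$. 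Passing to the spreading model would then give $\|\sum_j c_j e_j\| \geqslant (\theta/2)\sum_j |c_j|$, so $(e_i)$ is equivalent to the $\ell_1$ basis. The whole argument rests on the two cited propositions, so there is no real obstacle; the only subtle point is recognizing that the degrading $\theta/2^n$ constants in Proposition \ref{prop3.5} still produce a uniform $\ell_1$-lower bound on the spreading model via the $n=1$ case.
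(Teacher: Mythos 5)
Your argument is correct and is essentially the paper's own: the paper proves this corollary by simply citing Propositions \ref{prop3.5} and \ref{cor4.7}, and your case split on whether both the $\al$- and $\be$-indices vanish (after the standard perturbation to a block sequence), together with the $n=1$ specialization of Proposition \ref{prop3.5} for the $\ell_1$ lower estimate, is exactly the intended reading. No gaps.
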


\begin{dfn}
A pair $\{x,f\}$, where $x\in\X$ and $f\in W$ is called an $(n,1)$-exact pair, if the following hold.

\begin{itemize}

\item[(i)] $f$ is a functional of type I$_\al$ with $w(f) = n, \min\supp x\leqslant\min\supp f$ and $\max\supp x\leqslant \max\supp f$.

\item[(ii)] There exists $x^\prime\in\X$ a $(4,1,n)$ exact vector, such that $\frac{28}{29}<f(x^\prime)\leqslant 1$ and $x = \frac{x^\prime}{f(x^\prime)}$.

\end{itemize}

A pair $\{x,f\}$, where $x\in\X$ and $f\in W$ is called an $(n,0)$-exact pair, if the following hold.

\begin{itemize}

\item[(i)] $f$ is a functional of type I$_\al$ with $w(f) = n, \min\supp x\leqslant\min\supp f$ and $\max\supp x\leqslant \max\supp f$.

\item[(ii)] $x$ is a $(4,1,n)$ exact vector and $f(x) = 0$.

\end{itemize}
\label{defexactpair}
\end{dfn}

\begin{rmk}
If $\{x,f\}$ is an $(n,1)$-exact pair, then $f(x) = 1$ and by Remark \ref{remexactest} we have that $1\leqslant\|x\|\leqslant 29$.\label{remestexactpair}
\end{rmk}

\begin{prp}
Let $\{x_k\}_{k\inn}$ be a normalized block sequence in $\X$, that
generates a $c_0$ spreading model. Then there exists
$\{F_k\}_{k\inn}$ an increasing sequence of subsets of the
naturals such that $\#F_k\leqslant\min F_k$ for all $k\inn$ and
$\lim_k\#F_k = \infty$ such that by setting $y_k = \sum_{i\in
F_k}x_k$, there exists a subsequence of $\{y_k\}_{k\inn}$, which
generates an $\ell_1^n$ spreading model, for all $n\inn$.

Furthermore, for any $k_0,n\inn$, there exists $F$ a
finite subset of $\mathbb{N}$ with $\min F\geqslant k_0$ and
$\{c_k\}_{k\in F}$, such that
\begin{enumerate}

\item[(i)]$x^\prime = 2^n\sum_{k\in F}c_ky_k$ is a $(4,1,n)$ exact vector.

\item[(ii)] For any $\eta > 0$ there exists a functional $f_\eta$ of
type I$_\al$ of weight $w(f_\eta) = n$ such that $f_\eta(x^\prime) > 1-\eta, \min\supp x^\prime\leqslant\min\supp f_\eta$ and $\max\supp f_\eta > \max\supp x^\prime$.

\end{enumerate}
In particular, if $f = f_{1/29}, x = \frac{x^\prime}{f(x^\prime)}$, then $\{x,f\}$ is an $(n,1)$-exact pair.
\label{prop4.9}
\end{prp}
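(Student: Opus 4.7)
The plan is to thin $\{x_k\}_k$ via Corollary~\ref{cor4.7} to gain tight control over type I$_\al$ actions, then form Schreier-admissible Cesàro-like sums $y_k=\sum_{i\in F_k}x_i$ which are simultaneously $\al$-RIS and carry positive $\al$-index. After passing to a subsequence we may assume $\|\sum_{i=1}^n x_{k_i}\|\leq 4$ and that for any type I$_\al$ functional $f$ of weight $j<j_n$ we have $|f(\sum_{i=1}^n x_{k_i})|<4/2^j$ whenever $n\leq k_1<\cdots<k_n$. I would next choose successive $F_k\subset\mathbb{N}$ with $\#F_k\leq\min F_k$, with $\#F_k$ strictly increasing to infinity and with $\#F_{k+1}>2^{\max\supp x_{\max F_k}}$, and set $y_k=\sum_{i\in F_k}x_i$, so that $\|y_k\|\leq 4$. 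Letting $n_k=j_{\#F_k}$, the second bound from Corollary~\ref{cor4.7}, together with $\#F_k\leq\min F_k$, gives $|f(y_k)|<4/2^j$ for every type I$_\al$ functional $f$ with $w(f)=j<n_k$, and an easy further thinning makes $\{y_k\}_k$ a $(4,\{n_k\}_k)$ $\al$-RIS sequence.

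To obtain the $\ell_1^n$ spreading models and the exact vector, I would verify $\adx>0$ for $\{y_k\}_k$. For each $i\in F_k$ pick $g_i\in W$ with $g_i(x_i)=1$ and restrict it to $\ran x_i$; then
\[
\al_k=\tfrac{1}{\#F_k}\sum_{i\in F_k}g_i
\]
is an $\al$-average in $W$ of size $s(\al_k)=\#F_k$ with $\al_k(y_k)=1$, and the growth of $\#F_k$ makes $\{\al_k\}_k$ very fast growing. Proposition~\ref{prop3.5} applied with the singleton witnesses $F_k=\{k\}$, which are $\mathcal{S}_0$-admissible, yields $\theta=1$ and provides, for any $n,k_0$, a finite set $F$ with $\min F\geq k_0$ and coefficients $\{c_k\}_{k\in F}$ for which $x'=2^n\sum_{k\in F}c_ky_k$ is a $(4,1,n)$ exact vector (here one simply starts $F$ far enough out that $n_{\min F}>2^{2n}$ and $\min\supp y_{\min F}\geq 8\cdot 4\cdot 2^{2n}$), together with the lower constant $1/2^n$ on all $\ell_1^n$ sums of $\{y_k\}_k$.

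The only delicate point is producing $f_\eta$ with support extending past $\max\supp x'$. The natural candidate $f=\tfrac{1}{2^n}\sum_{k\in F}\al_k$ is type I$_\al$ of weight $n$ in $W$ with $f(x')=1$, but its support is contained in $\supp x'$. To push the right endpoint beyond $\max\supp x'$, I would pick $k^+>\max F$ large enough that the $\al$-average $\al_{k^+}$ built in $\ran y_{k^+}$ lies past $\max\supp x'$, and then \emph{replace} $\al_{\max F}$ by $\al_{k^+}$, setting
\[
f_\eta=\tfrac{1}{2^n}\Big(\sum_{k\in F\setminus\{\max F\}}\al_k+\al_{k^+}\Big).
\]
By the spreading property of $\Sn$, swapping $\max F$ for the larger $k^+$ keeps the index set in $\Sn$, so $\Sn$-admissibility is preserved; the growth of $\#F_k$ keeps the sequence very fast growing, and hence $f_\eta\in W$ is of type I$_\al$ of weight $n$. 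Since $\al_{k^+}(x')=0$, we compute $f_\eta(x')=1-c_{\max F}$, and because $\sum_{k\in F}c_ke_{\min\supp y_k}$ is an $(n,\e)$ basic s.c.c. we have $c_{\max F}<\e$; choosing $\e<\min\{\eta,\,1/(144\cdot 2^{3n})\}$ preserves the $(4,1,n)$-exactness and forces $f_\eta(x')>1-\eta$.

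The main obstacle is precisely this last construction: $\Sn$ is not closed under appending an arbitrary later element, so a naive extension of $f$ would destroy admissibility, which is why the swap trick above is essential. Finally, the last assertion is immediate from Definition~\ref{defexactpair}: the choice $\eta=1/29$ yields $f_{1/29}(x')\in(28/29,1]$, so setting $f=f_{1/29}$ and $x=x'/f(x')$ produces the desired $(n,1)$-exact pair.
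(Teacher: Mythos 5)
Your overall route---thin $\{x_k\}_k$ via Corollary \ref{cor4.7}, form Schreier sums $y_k=\sum_{i\in F_k}x_i$ with $\#F_k\leqslant\min F_k$ and $\#F_{k+1}>2^{\max\supp x_{\max F_k}}$, note that $\{y_k\}_k$ is $(4,\{n_k\}_k)$ $\al$-RIS with positive $\al$-index witnessed by the averages $\al_k=\frac{1}{\#F_k}\sum_{i\in F_k}g_i$, and then produce the exact vector from an $(n,\e)$ s.c.c.\ of the $y_k$'s---is exactly the paper's. The genuine gap is in item (ii), where the quantifiers matter: the proposition fixes $F$, $\{c_k\}_{k\in F}$ and hence $x^\prime$ \emph{first}, and then requires that \emph{for every} $\eta>0$ there is a functional $f_\eta$ with $f_\eta(x^\prime)>1-\eta$. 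Your swap trick gives $f_\eta(x^\prime)\leqslant 1-c_{\max F}$, a quantity that is fixed once $x^\prime$ is fixed and strictly less than $1$ (the coefficients of a basic s.c.c.\ are positive). You then ``choose $\e<\eta$,'' but $\e$ determines $F$ and the coefficients, so you are silently producing a different $x^\prime$ for each $\eta$; for the single $x^\prime$ the statement demands, your bound fails as soon as $\eta<c_{\max F}$. (A secondary issue feeding into this: there need not exist $g_i\in W$ with $g_i(x_i)=1$ exactly, only $g_i(x_i)>1-\delta$; this is harmless for $\ady>0$ and for $\|x^\prime\|\geqslant1$ by letting $\delta\to0$, but it means your evaluation cannot be improved past $1-c_{\max F}$.)

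The paper resolves the support issue in the opposite direction. It takes an $(n,\e^\prime)$ s.c.c.\ $\sum_{k=1}^{m}c_ky_{d_k}$, \emph{drops the last vector from the vector side} and renormalizes, setting $x^\prime=2^n\sum_{k=1}^{m-1}\frac{c_k}{1-c_m}y_{d_k}$ (with $\e^\prime=\e(1-\e)$ so the renormalized coefficients still form an $(n,\e)$ basic s.c.c.), while the functional $f_\eta=\frac{1}{2^n}\sum_{k=1}^{m}\al_k$ retains all $m$ averages; the last one is supported in $\ran y_{d_m}$, hence beyond $\max\supp x^\prime$, and $\Sn$-admissibility is automatic because $\{\min\supp\al_k\}_{k=1}^m$ spreads $\{\min\supp y_{d_k}\}_{k=1}^m\in\Sn$. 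Then $f_\eta(x^\prime)\geqslant(1-\eta)\sum_{k=1}^{m-1}\frac{c_k}{1-c_m}=1-\eta$: the deficit from the unmatched average is absorbed by the renormalization, and $\eta$ enters only through the accuracy $\al_k(y_{d_k})>1-\eta$ of the averages, which can be re-chosen for each $\eta$ after $x^\prime$ is fixed. Your admissibility analysis of the swap is correct as far as it goes, but the construction of $f_\eta$ needs to be replaced by this drop-and-renormalize device (or some equivalent that leaves no $\eta$-independent deficit).
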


\begin{proof}

Since $\{x_k\}_{k\inn}$ generates a $c_0$ spreading model,
Proposition \ref{prop3.5} yields that $\adx = 0$ as well as $\bdx
= 0$, therefore passing, if necessary, to a subsequence
$\{x_k\}_{k\inn}$, satisfies the conclusion of Proposition
\ref{cor4.7}.

Choose $\{F_k\}_{k\inn}$ an increasing sequence of subsets of the
naturals, such that the following are satisfied.
\begin{enumerate}

\item[(i)] $\#F_k \leqslant \min F_k$ for all $k\inn$.

\item[(ii)] $\#F_{k+1} > \max\big\{\#F_k,\;2^{\max\supp x_{\max
F_k}}\big\}$, for all $k\inn$.

\end{enumerate}

By Proposition \ref{prop4.5} and Remark \ref{rem3.11}, we have
that $1 \leqslant \|y_k\| \leqslant 4$, for all $k\inn$ and
passing, if necessary, to a subsequence, $\{y_k\}_{k\inn}$ is
$(4,\{n_k\}_{k\inn})$\;$\al$-RIS.

Moreover it is easy to see, that for any $k\inn, \eta > 0$, there
exists an $\al$-average $\al$ of size $s(\al) = \#F_k$, such that
$\al(y_k) > 1 - \eta$ and $\ran\al\subset y_k$.

This yields that $\ady > 0$, therefore we may apply Proposition
\ref{prop3.5} to conclude that $\{y_k\}_{k\inn}$ has a subsequence
generating an $\ell_1^n$ spreading model, for all $n\inn$.

We now prove the second assertion. Let $k_0,n\inn$ and fix $0< \e < (36\cdot4\cdot2^{3n})^{-1}$.
By taking a larger $k_0$, we may assume that $n_{k_0} > 2^{2n}$.

Set $\e^\prime = \e(1-\e)$ Proposition \ref{prop1.5} yields that
there exists $\{d_1,\ldots,d_m\}$ a finite subset of $\{k:
k\geqslant k_0\}$ and $\{c_k\}_{k=1}^m$ such that $x^{\prime\prime} =
\sum_{k=1}^mc_ky_{d_k}$ is a $(n,\e^\prime)$ s.c.c. It is
straightforward to check that $x^\prime =
2^n\sum_{k=1}^{m-1}\frac{c_k}{1-c_m}y_{d_k}$ is a $(4,\theta,n)$ exact vector. If (ii) also holds, it will follow that $\theta \geqslant 1$.

For some $\eta>0$, $k = 1,\ldots,m$, choose an $\al$-average
$\al_k$ of size $s(\al_k) = \#F_{d_k}$, such that $\al_k(y_{d_k})>
1 - \eta$ and $\ran\al\subset y_k$. Set $f = \frac{1}{2^n}(\sum_{k
= 1}^m\al_k)$, which is a functional of type I$_\al$ of weight
$w(f) =n$ such that $f(x^\prime) > 1 - \eta$ and $\max\supp f
> \max\supp x$.

\end{proof}

\begin{cor}
Let $Y$ be an infinite dimensional closed subspace of $\X$. Then
$Y$ admits a spreading model equivalent to $c_0$ as well as a
spreading model equivalent to $\ell_1$.\label{cor4.10}
\end{cor}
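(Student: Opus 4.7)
The plan is to produce, inside $Y$, two block-type sequences, one generating $c_0$ and one generating $\ell_1$ as a spreading model, by invoking the machinery of Sections~3 and~4. Since $\X$ is reflexive (Corollary~\ref{cor2.24}) with a shrinking basis (Corollary~\ref{cor2.22}), $Y$ contains a normalized weakly null sequence, and a standard sliding hump argument produces a normalized sequence $\{z_n\}_{n\inn}\subset Y$ which, after passing to a subsequence, is $(1+\e)$-equivalent to a normalized block sequence $\{x_n\}_{n\inn}$ of the basis of $\X$, with $\e$ as small as we wish. Since spreading models are preserved under such small equivalences and summable perturbations, it suffices to exhibit the two spreading models within the closed linear span of $\{x_n\}_{n\inn}$ via block vectors; the corresponding combinations in $Y$ will generate the same spreading models.

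For the $c_0$ spreading model, I would apply Corollary~\ref{cor3.17} to $\{x_n\}_{n\inn}$ to produce a normalized block sequence $\{x_n'\}_{n\inn}$ with $\al\big(\{x_n'\}_n\big)=\be\big(\{x_n'\}_n\big)=0$, and then Proposition~\ref{cor4.7} to pass to a subsequence generating $c_0$ as a spreading model.

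For the $\ell_1$ spreading model, I would feed this $c_0$-spreading-model sequence into Proposition~\ref{prop4.9}. This yields a seminormalized block sequence $\{y_k\}_{k\inn}$ of Schreier-type sums with $1\leqslant\|y_k\|\leqslant 4$ satisfying, by Proposition~\ref{prop3.5}, the lower estimate
\begin{equation*}
\bigg\|\sum_{k\in F}c_ky_k\bigg\|\;\geqslant\;\frac{\theta}{2^n}\sum_{k\in F}|c_k|
\end{equation*}
for every $n\inn$ and every $F$ with $\{\min\supp y_k:k\in F\}\in\mathcal{S}_n$. Corollary~\ref{cor4.8} applied to the weakly null normalized sequence $\{y_k/\|y_k\|\}_{k\inn}$ then furnishes a subsequence whose spreading model is equivalent to either $c_0$ or $\ell_1$. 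The $c_0$ option is ruled out: for any fixed $n\inn$ and any $\mathcal{S}_n$-admissible set $F$ of arbitrarily large size $N$ (drawn from a sufficiently late tail of the subsequence), setting $c_k=1$ in the estimate above and using $\|y_k\|\leqslant 4$ gives $\|\sum_{k\in F}y_k/\|y_k\|\|\geqslant \theta N/(4\cdot 2^n)$, which is unbounded in $N$ and contradicts any $c_0$ spreading model constant. Hence the spreading model must be $\ell_1$, and pulling back through the sliding hump perturbation produces a sequence in $Y$ with $\ell_1$ spreading model.

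The main obstacle in executing this plan is verifying that the sliding hump perturbation from $\{x_n\}_{n\inn}$ to $\{z_n\}_{n\inn}\subset Y$ does not disrupt the delicate combinatorial invariants (admissibility, very fast growing averages, tree analyses, RIS conditions in Definition~\ref{def3.10}) underlying Corollary~\ref{cor3.17}, Proposition~\ref{cor4.7}, and Proposition~\ref{prop4.9}. This is handled by choosing the perturbation errors summable and small relative to the thresholds appearing in those arguments, but it is the step requiring the most bookkeeping.
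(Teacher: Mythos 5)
Your proposal is correct and follows essentially the same route as the paper: pass to a sequence in $Y$ equivalent to a block sequence, obtain the $c_0$ spreading model from Corollary \ref{cor3.17} together with Proposition \ref{cor4.7}, and obtain the $\ell_1$ spreading model from Proposition \ref{prop4.9}, transferring back to $Y$ by perturbation. The only difference is cosmetic: Proposition \ref{prop4.9} already asserts that the Schreier sums $\{y_k\}$ have a subsequence generating an $\ell_1^n$ spreading model for all $n$ (in particular $\ell_1$), so your detour through Corollary \ref{cor4.8} and the exclusion of the $c_0$ alternative is unnecessary, though harmless.
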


\begin{proof}
Assume first that $Y$ is generated by some normalized block
sequence $\{x_k\}_{k\inn}$. Corollary \ref{cor3.17} and
Proposition \ref{cor4.7} yield that it has a further normalized
block sequence $\{y_k\}_{k\inn}$, generating a spreading model
equivalent to $c_0$.

Proposition \ref{prop4.9} yields that $\{y_k\}_{k\inn}$ has a
further block sequence generating an $\ell_1$ spreading model.

Since any subspace contains a sequence arbitrarily close to a
block sequence, the result follows.

\end{proof}

Corollary \ref{cor4.10} and Proposition \ref{prop4.9} yield the following.

\begin{cor}
Let $X$ be a block subspace of $\X$. Then for every $n\inn$ there exists $x\in X$ and $f\in W$, such that $\{x,f\}$ is an $(n,1)$-exact pair.\label{corexactpairexist}
\end{cor}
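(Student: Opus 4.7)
The plan is to reduce the corollary to Corollary \ref{cor4.10} and Proposition \ref{prop4.9}, as indicated by the sentence immediately preceding its statement. The work is already done; the corollary is essentially a packaging statement.

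First I would produce a normalized block sequence in $X$ generating a $c_0$ spreading model. This is the block-sequence case of Corollary \ref{cor4.10}: starting with any normalized block sequence of $\{e_n\}_n$ that generates $X$, I invoke Corollary \ref{cor3.17} to extract a further normalized block sequence $\{y_k\}_{k\inn}\subset X$ with $\ady = 0$ and $\bdy = 0$, and then Proposition \ref{cor4.7} ensures, after passing to a subsequence still denoted $\{y_k\}_k$, that $\{y_k\}_k$ generates a $c_0$ spreading model.

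Second, I would feed $\{y_k\}_k$ into Proposition \ref{prop4.9}. For the fixed $n\inn$, the final assertion of that proposition (together with the choice $\eta = 1/29$) directly produces the desired pair: it yields an increasing family $\{F_k\}_k$ of Schreier-admissible subsets, the averaged vectors $z_k = \sum_{i\in F_k}y_i$, a finite index set $F\subset\mathbb{N}$ with coefficients $\{c_k\}_{k\in F}$, a $(4,1,n)$ exact vector $x^\prime = 2^n\sum_{k\in F}c_kz_k$, and a type $\mathrm{I}_\al$ functional $f\in W$ of weight $n$ with $f(x^\prime) > 28/29$, $\min\supp x^\prime \leqslant \min\supp f$ and $\max\supp f > \max\supp x^\prime$. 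Setting $x = x^\prime/f(x^\prime)$, the proposition itself certifies that $\{x,f\}$ is an $(n,1)$-exact pair in the sense of Definition \ref{defexactpair}.

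The only small point to verify is that the resulting $x$ actually lies in $X$; this is immediate because each $z_k$ is a finite sum of $y_i\in X$, so $x^\prime$, and hence $x$, belongs to the linear span of $\{y_i\}_i\subset X$. There is no genuine obstacle in this corollary, since the real technical content — the upgrade from a $c_0$ spreading model to an exact vector with a norming type $\mathrm{I}_\al$ functional of preassigned weight — was already carried out inside Proposition \ref{prop4.9}.
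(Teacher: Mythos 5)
Your proposal is correct and follows essentially the same route as the paper, which derives this corollary in one line from Corollary \ref{cor4.10} and Proposition \ref{prop4.9}; you merely unpack the block-sequence case of Corollary \ref{cor4.10} via Corollary \ref{cor3.17} and Proposition \ref{cor4.7} before invoking the final assertion of Proposition \ref{prop4.9}. The added remark that $x$ lies in $X$ because it is a finite linear combination of blocks of $X$ is the right (and only) point needing verification.
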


We remind that, as Propositions \ref{prop3.5} and \ref{cor4.7}
state, if a sequence generates an $\ell_1$ spreading model, then
passing, if necessary, to a subsequence, it generates an
$\ell_1^k$ spreading model for any $k\inn$. However, as the next
proposition states, the space $\X$ does not admit higher order
$c_0$ spreading models.

\begin{prp} The space $\X$ does not admit $c_0^2$ spreading
models.\label{rem4.11}
\end{prp}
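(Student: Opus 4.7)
The plan is to argue by contradiction. Suppose some seminormalized sequence in $\X$ generates a $c_0^2$ spreading model, with constant $C$: that is, for all sufficiently late $\mathcal{S}_2$-admissible finite sets $F$ one has $\|\sum_{k\in F} c_k x_k\| \leqslant C\max_k |c_k|$. By the reflexivity of $\X$ (Corollary \ref{cor2.24}) together with a standard gliding hump argument, I may assume from the start that $\{x_k\}_k$ is a normalized block sequence.

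Since $\mathcal{S}_1 \subset \mathcal{S}_2$, such a sequence in particular generates a $c_0$ spreading model, so Proposition \ref{prop4.9} applies. I would use it to produce an increasing family of successive finite sets $\{F_k\}_k$ satisfying $\#F_k \leqslant \min F_k$ and $\#F_k \to \infty$, with the property that, after passing to a subsequence of $k$, the block vectors $y_k = \sum_{i \in F_k} x_i$ generate an $\ell_1$ spreading model with some positive lower constant $\theta$. Hence for all sufficiently large $n$ and every $n \leqslant k_1 < \cdots < k_n$ in this subsequence,
\begin{equation*}
\Big\|\sum_{i=1}^n y_{k_i}\Big\| \geqslant \theta n.
\end{equation*}

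The contradiction is then extracted via the Schreier convolution identity $\mathcal{S}_1 \ast \mathcal{S}_1 = \mathcal{S}_2$: each $F_{k_i}$ lies in $\mathcal{S}_1$, the sets are successive, and $\min F_{k_1} \geqslant n$ as soon as $k_1$ is large enough (recall $\min F_k \to \infty$). Thus $\bigcup_{i=1}^n F_{k_i} \in \mathcal{S}_2$ and its minimum is at least $n$. Applying the standing $c_0^2$ hypothesis to the $\mathcal{S}_2$-set $\bigcup_i F_{k_i}$, one reads off
\begin{equation*}
\Big\|\sum_{i=1}^n y_{k_i}\Big\| = \Big\|\sum_{j \in \bigcup_i F_{k_i}} x_j\Big\| \leqslant C,
\end{equation*}
which for large $n$ contradicts the lower bound $\theta n$.

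I expect the main obstacle to be in matching the admissibility conventions: precisely what ``$c_0^2$ spreading model'' means in the paper's conventions, and checking that the $\mathcal{S}_2$-admissibility of $\bigcup_i F_{k_i}$ (as guaranteed by $\mathcal{S}_1 \ast \mathcal{S}_1 = \mathcal{S}_2$) is exactly the admissibility allowed by that definition. Once these definitions are pinned down, the argument is essentially a short contradiction between the $\ell_1$-growth provided by Proposition \ref{prop4.9} and the uniform boundedness forced by the $c_0^2$ hypothesis.
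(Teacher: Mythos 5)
Your argument is correct and is essentially the paper's own proof: both apply Proposition \ref{prop4.9} to obtain Schreier sums $y_k=\sum_{i\in F_k}x_i$ generating an $\ell_1^n$ spreading model, and then use $\mathcal{S}_1\ast\mathcal{S}_1=\mathcal{S}_2$ to contradict the uniform bound forced by a $c_0^2$ spreading model. The admissibility bookkeeping you flag is exactly the point the paper's one-line conclusion relies on, and it goes through as you describe.
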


\begin{proof}
Towards a contradiction, assume that there is a sequence
$\{x_k\}_{k\inn}$ in $\X$, generating a $c_0^2$ spreading model.
Then it must be weakly null and we may assume that it is a
normalized block sequence. By Proposition \ref{prop4.9}, it
follows that there exist $\{F_k\}_{k\inn}$ increasing, Schreier
admissible subsets of the naturals and $c>0$ such that
$\|\sum_{j=1}^n\sum_{i\in F_{k_j}}x_i\| \geqslant n\cdot c$ for
any $n\leqslant k_1 <\ldots< k_n$. Since for any such
$F_{k_1}<\cdots< F_{k_n}$ we have that $\cup_{j=1}^n
F_{k_j}\in\mathcal{S}_2$, it follows that $\{x_k\}_{k\inn}$ does
not generate a $c_0^2$ spreading model.

\end{proof}

\begin{cor}Let $Y$ be an infinite dimensional closed subspace of $\X$. Then
$Y^*$ admits a spreading model equivalent to $\ell_1$ as well as a
spreading model equivalent to $c_0^n$, for any
$n\inn$.\label{cor4.12}
\end{cor}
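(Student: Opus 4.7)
\emph{Proof proposal.} The plan is to produce two sequences in $Y^*$: biorthogonal functionals of a $c_0$-spreading-model sequence in $Y$ generating an $\ell_1$ spreading model, and, for each fixed $n$, a sequence of type I$_\al$ functionals coming from $(n,1)$-exact pairs generating a $c_0^n$ spreading model.

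For the $\ell_1$ part, I would invoke Corollary \ref{cor4.10} to extract a normalized block sequence $\{x_k\}\subset Y$ generating a $c_0$ spreading model and use Proposition \ref{cor4.7} to obtain the upper estimate $\|\sum_{i=1}^m x_{k_i}\|\leqslant 4$ for $m\leqslant k_1<\cdots<k_m$; the signed analogue $\|\sum\sgn(a_j)x_{k_j}\|\leqslant 8$ follows from the triangle inequality. Taking Hahn--Banach extensions $\{x_k^*\}\subset Y^*$ of the biorthogonal functionals of $\{x_k\}$ in $[x_k]^*$ gives a sequence bounded in norm by $2K$, where $K$ is the basis constant of $\{x_k\}$. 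Evaluating $\sum a_ix_{k_i}^*$ on the witness $\sum_j\sgn(a_j)x_{k_j}$ produces the lower estimate $\|\sum a_ix_{k_i}^*\|_{Y^*}\geqslant\tfrac{1}{8}\sum|a_i|$, while the trivial upper estimate is $\leqslant 2K\sum|a_i|$. A Ramsey stabilization on a subsequence yields an $\ell_1$ spreading model in $Y^*$.

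For the $c_0^n$ part at a fixed $n$, I would use Corollary \ref{corexactpairexist} inductively to build a block-successive sequence $\{(v_k,g_k)\}_k$ of $(n,1)$-exact pairs with $v_k\in Y$, $g_k\in W$, and $\max\supp g_k<\min\supp v_{k+1}$, arranged so that the $\al$-averages constituting the $g_j$'s form, in their natural concatenated order, a very-fast-growing family. This is achievable by choosing, in each application of Proposition \ref{prop4.9} producing the $(k{+}1)$-st pair, the starting index so large that the sizes of the Schreier sets used exceed $2^{\max\supp g_k}$. The block-successive supports force $g_i(v_j)=\delta_{ij}$, and Remark \ref{remestexactpair} gives $\|v_j\|\leqslant 29$; together these yield the lower estimate
\begin{equation*}
\|\sum_i a_ig_{k_i}\|_{Y^*}\geqslant \max_i|a_i|/29.
\end{equation*}

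The crux is the upper estimate. Writing $g_{k_i}=2^{-n}\sum_q\al_{i,q}$ and fixing a $\Sn$-admissible $\{k_1,\ldots,k_m\}$ with $k_1$ large, the combined family $\{\al_{i,q}\}_{i,q}$ is $\mathcal{S}_{2n}$-admissible by the convolution identity $\Sn*\Sn=\mathcal{S}_{2n}$ and very-fast-growing by construction. Since $\pm\al$ is an $\al$-average whenever $\al$ is, for any sign pattern $(\sigma_i)$ the functional $h=\frac{1}{2^{2n}}\sum_{i,q}\sigma_i\al_{i,q}$ is a type I$_\al$ element of $W$ of weight $2n$, giving $\|\sum_i\sigma_ig_{k_i}\|_{\X^*}=2^n\|h\|_{\X^*}\leqslant 2^n$. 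Applying this with $\sigma_i=\sgn g_{k_i}(z)$ for each test vector $z$ with $\|z\|\leqslant 1$ gives $\sum_i|g_{k_i}(z)|\leqslant 2^n\|z\|$, whence $\|\sum a_ig_{k_i}\|_{Y^*}\leqslant 2^n\max|a_i|$. Ramsey stabilization now produces a $c_0^n$ spreading model in $Y^*$ with constants $(1/29,2^n)$. The main obstacle, I expect, is arranging the exact pairs so that both the $\mathcal{S}_{2n}$-admissibility and the very-fast-growing hypothesis hold for the concatenated family of $\al$-averages; this is handled by the inductive control of the Schreier set minima supplied by Proposition \ref{prop4.9}.
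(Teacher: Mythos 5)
Your proposal is correct, and its $\ell_1$ half coincides with the paper's argument: the paper likewise takes a sequence $\{x_k\}_k\subset Y$ generating a $c_0$ spreading model and observes that any normalized sequence $\{x_k^*\}_k\subset Y^*$ with $x_k^*(x_m)=\de_{k,m}$ generates an $\ell_1$ spreading model, exactly via the signed upper $c_0$ estimate you write down. For the $c_0^n$ half you take a genuinely different route. The paper does not use exact pairs there: it forms Schreier sums $y_k=\sum_{i\in F_k}x_i$ of the $c_0$-spreading-model sequence and picks for each $k$ a \emph{single} $\al$-average $\al_k$ of size $\#F_k$ almost norming $y_k$; these $\al_k$ form a very fast growing sequence, so for $F\in\Sn$ the functional $\frac{1}{2^n}\sum_{q\in F}\al_q$ is already a weight-$n$ type I$_\al$ element of $W$, giving $\|\sum_{q\in F}\al_q\|\leqslant 2^n$ directly, and the seminormalizedness in $Y^*$ is obtained from $\liminf_k\|I^*\al_k\|\geqslant 1/c$ with $c=\limsup_k\|y_k\|$. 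You instead take whole weight-$n$ type I$_\al$ functionals $g_k$ coming from $(n,1)$-exact pairs and splice their constituent $\al$-averages into one weight-$2n$ functional via $\Sn*\Sn=\mathcal{S}_{2n}$, getting the same upper constant $2^n$, with the lower bound from the biorthogonality $g_i(v_j)=\de_{ij}$ and $\|v_j\|\leqslant 29$. Your route works but costs extra bookkeeping: you must propagate the very-fast-growing condition across the blocks of averages belonging to different exact pairs (you correctly identify this as the crux, and it is arrangeable through Proposition \ref{prop4.9}; note also that skipping blocks when passing to admissible subsets only improves the growth condition), and you rely on the fact that $-\al$ is an $\al$-average of the same size because $W$ is closed under negation, which is true. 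One loose end shared with the paper: $Y$ is an arbitrary closed subspace, so the exact pairs (respectively the block vectors $y_k$) live only approximately in $Y$, and a standard perturbation is needed before the lower estimates in $Y^*$ can be read off; you assert $v_k\in Y$ where the paper is equally brisk, so this is not a substantive defect of your argument.
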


\begin{proof}
Since $Y$ contains a sequence $\{x_k\}_{k\inn}$ generating a
spreading model equivalent to $c_0$, which we may assume is
Schauder basic, then for any normalized $\{x_k^*\}_{k\inn}\subset
Y^*$, such that $x^*_k(x_m) = \de_{n,m}$ for $n,m\in\mathbb{N}$,
we have that passing, if necessary, to a subsequence,
$\{x_k^*\}_{k\inn}$ generates a spreading model equivalent to
$\ell_1$.

To see that $Y^*$ admits a spreading model equivalent to $c_0^n$
for any $n\inn$, take the previously used sequence
$\{x_k\}_{k\inn}$. Working just like in the proof of Proposition
\ref{prop4.9} find $\{F_k\}_{k\inn}$ successive subsets of the
natural such that $\min F_k\geqslant \#F_k$, for all $k\inn$, if
$y_k = \sum_{i\in F_k}x_i$ for all $k\inn$, then $\{y_k\}_{k\inn}$
is seminormalized and there exists a very fast growing sequence of
$\al$-averages $\{\al_k\}_{k\inn}\subset W$ such that
$\lim\inf\al_k(\sum_{i\in F_k}x_i)\geqslant 1$.

Then, if $c = \lim\sup_k\|y_k\|$,we evidently have that
$\lim\inf_k\|\al_k\|\geqslant 1/c$ and since for any $n\inn,
F\in\mathcal{S}_n$, we have that $\frac{1}{2^n}\sum_{q\in
F}\al_{q}$ is a functional of type I$_\al$ in $W$, it follows that
$\|\sum_{q\in F}^d\al_{q}\|\leqslant 2^n$. This means that,
$\{\al_k\}_{k\inn}$ generates a spreading model equivalent to
$c_0^n$, with an upper constant $2^n$.

Let $I^*:\X^*\rightarrow Y^*$ be the dual operator of
$I:Y\rightarrow\X$. Then $\{I^*(\al_k)\}_{k\inn}$ generates a
spreading model equivalent to $c_0^n$, for any $n\inn$. Since
$\|I^*\| = 1$, all that needs to be shown is that
$\lim\inf_k\|I^*(\al_k)\|
> 0$. Indeed,
\begin{equation*}
\liminf_k\|I^*(\al_k)\|\geqslant
\liminf_k(I^*\al_k)(\frac{\sum_{i\in F_k}x_i}{c}) =
\liminf_k\al_k(\frac{\sum_{i\in F_k}x_i}{c})\geqslant 1/c
\end{equation*}

\end{proof}

\section{Properties of $\X$ and $\mathcal{L}(\X)$}

In this final section it is proved that $\X$ is hereditarily
indecomposable and the properties of the operators acting on
infinite dimensional closed subspaces of $\X$ are presented.

\subsection*{Dependent sequences and the HI property of $\X$.}
In the first part of this subsection we introduce the dependent
sequences, which are the main tool for proving the HI property of
$\X$ and studying the structure of the operators.

\begin{dfn} A sequence of pairs $\{x_k, f_k\}_{k=1}^n$, is said to be a
1-dependent sequence (respectively a 0-dependent sequence) if the
following are satisfied.
\begin{enumerate}

\item[(i)] $\{x_k, f_k\}$ is an $(m_k,1)$-exact pair (respectively an $(m_k,0)$-exact pair) for $k=1,\ldots,n$, with $m_1 > 4n2^{2n}$

\item[(ii)] $\max\supp f_k < \min\supp x_{k+1}$ for $k=1,\ldots,n-1$

\item[(iii)] $\{f_k\}_{k=1}^n$ is an $\mathcal{S}_1$-admissible
special sequence of type I$_\al$ functionals in $W$, i.e. $f = \frac{1}{2}\sum_{k=1}^nf_k$ is a functional of type II in $W$.

\end{enumerate}
\label{defdependent}
\end{dfn}

\begin{prp} Let $X$ be a block subspace of $\X$ and $n\inn$. Then there exist
$x_1,\ldots,x_n$ in $X$ and $f_1,\ldots,f_n$ in $W$, such that $\{x_k, f_k\}_{k=1}^n$ is a 1-dependent sequence.
\label{dependentsequenceexist}
\end{prp}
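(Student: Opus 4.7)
The plan is to build the pairs $\{x_k,f_k\}_{k=1}^n$ by induction on $k$, using Corollary \ref{corexactpairexist} together with the explicit flexibility of Proposition \ref{prop4.9}, which allows us to prescribe both the weight of the functional and the starting point of the support.

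For the base step, I would pick $m_1 \in L_1$ with $m_1 > \max\{4n\cdot 2^{2n},\,n\}$. By Corollary \ref{cor4.10}, $X$ contains a normalized block sequence generating a $c_0$ spreading model, and Proposition \ref{prop4.9} (applied to this sequence with the parameter $n$ of that proposition replaced by $m_1$, and with $k_0$ chosen large enough) produces a $(4,1,m_1)$-exact vector $x'_1 \in X$ with $\min\supp x'_1 \geqslant n$, together with a type~I$_\al$ functional $f_1 \in W$ of weight $w(f_1)=m_1$ satisfying $f_1(x'_1) > 28/29$. Setting $x_1 = x'_1/f_1(x'_1)$ yields an $(m_1,1)$-exact pair $\{x_1,f_1\}$ with $\min\supp f_1 \geqslant n$.

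For the inductive step, suppose $\{x_k,f_k\}_{k=1}^j$ have been chosen for some $j<n$. Set
\[
m_{j+1} \;=\; \sigma\bigl((f_1,w(f_1)),\ldots,(f_j,w(f_j))\bigr).
\]
By the defining properties of $\sigma$ we have $m_{j+1} \in L_2$ and $m_{j+1} > 2^{w(f_j)}\cdot\max\supp f_j$; in particular $m_{j+1} > \max\supp f_j$. A second application of Proposition \ref{prop4.9} in $X$, now with weight $m_{j+1}$ and with $k_0$ so large that the resulting exact vector is supported strictly beyond $\max\supp f_j$, produces an $(m_{j+1},1)$-exact pair $\{x_{j+1},f_{j+1}\}$ with $w(f_{j+1}) = m_{j+1}$ and $\min\supp x_{j+1} > \max\supp f_j$, completing the induction.

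The three clauses of Definition \ref{defdependent} now follow. Clause (i) is immediate from $m_1 > 4n\cdot 2^{2n}$ and the construction of each pair as an exact pair of the requisite type. Clause (ii) is exactly the support condition built into the inductive step. For clause (iii), $w(f_1) = m_1 \in L_1$ and $w(f_{j+1}) = \sigma((f_1,w(f_1)),\ldots,(f_j,w(f_j)))$ by definition of $m_{j+1}$; the $\mathcal{S}_1$-admissibility of $\{f_k\}_{k=1}^n$ follows from $n \leqslant \min\supp f_1$ together with the successiveness $\max\supp f_k < \min\supp f_{k+1}$. The only point requiring any care is verifying that Proposition \ref{prop4.9} is genuinely flexible enough to prescribe the weight $m_{j+1}$ and to start the support arbitrarily far to the right, but this is explicit in its statement, so no substantive obstacle arises.
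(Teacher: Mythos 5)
Your proposal is correct and follows essentially the same route as the paper: choose $m_1\in L_1$ large, use the coding function $\sigma$ to determine each subsequent weight, and produce each exact pair far enough to the right; the only cosmetic difference is that you invoke Corollary \ref{cor4.10} and Proposition \ref{prop4.9} directly where the paper cites their packaged consequence, Corollary \ref{corexactpairexist}.
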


\begin{proof}
Choose $m_1\in L_1$ with $m_1 > 4n2^{2n}$. By Corollary \ref{corexactpairexist} there
exists $\{x_1, f_1\}$ an $(m_1,1)$-exact pair in $X$. Then $\min\supp f_1 \geqslant \min\supp x_1 > n$.

Let $d<n$ and suppose that we have chosen $\{x_k, f_k\}$ $(m_k,1)$-exact
pairs for  $k=1,\ldots,d$ such that $\{f_k\}_{k=1}^m$ is a
special sequence and $\max\supp f_k < \min\supp x_{k+1}$ for
$k=1,\ldots,d$.

Set $m_{d+1} = \sigma\big((f_1,m_1),\ldots,(f_d,m_d)\big)$. Then applying
Corollary \ref{corexactpairexist} once more, there exists
$\{x_{d+1},f_{d+1}\}$ an $m_{d+1}$-exact pair in
$X$, such that $\max\supp f_d < \min\supp x_{d+1}$.

The inductive construction is complete and
$\{x_k,f_k,\}_{k=1}^n$ is a 1-dependent sequence.

\end{proof}

An easy modification of the above proof yields the following.

\begin{cor}
If $X, Y$ are block subspaces of $\X$ and $n\inn$, then a 1-dependent sequence
$\{x_k,f_k,\}_{k=1}^{2n}$ can be chosen, such that
$x_{2k-1}\in X$ and $x_{2k}\in Y$ for $k=1,\ldots,n$.\label{remarkforhi}
\end{cor}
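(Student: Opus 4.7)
The plan is to run essentially the same inductive construction as in Proposition \ref{dependentsequenceexist}, but at each step choose the next exact pair alternately from $X$ or $Y$. The key observation is that Corollary \ref{corexactpairexist} produces an $(n,1)$-exact pair in \emph{any} block subspace of $\X$, and that for any $N\in\mathbb{N}$ the tail $\{z\in Z : \min\supp z > N\}$ of a block subspace $Z$ is again (the linear span of a) block subspace of $\X$. Thus whenever we need a new exact pair whose support starts past a given natural number, we may locate it in either $X$ or $Y$ as required.

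Concretely, I would choose $m_1\in L_1$ with $m_1 > 4(2n)2^{2(2n)}$, and use Corollary \ref{corexactpairexist} applied to the block subspace $X$ to obtain an $(m_1,1)$-exact pair $\{x_1,f_1\}$ with $x_1\in X$. Given $\{x_j,f_j\}_{j=1}^{d}$ already chosen with $x_{2j-1}\in X$, $x_{2j}\in Y$, and $\max\supp f_j<\min\supp x_{j+1}$, set
\begin{equation*}
m_{d+1}=\sigma\bigl((f_1,m_1),\ldots,(f_d,m_d)\bigr),
\end{equation*}
and let $Z_{d+1}=X$ if $d+1$ is odd and $Z_{d+1}=Y$ if $d+1$ is even. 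Apply Corollary \ref{corexactpairexist} to the tail block subspace $\{z\in Z_{d+1} : \min\supp z>\max\supp f_d\}$ to obtain an $(m_{d+1},1)$-exact pair $\{x_{d+1},f_{d+1}\}$ with $x_{d+1}\in Z_{d+1}$ and $\min\supp x_{d+1}>\max\supp f_d$.

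It remains to verify that $\{f_k\}_{k=1}^{2n}$ is an $\mathcal{S}_1$-admissible special sequence of type I$_\al$ functionals. Admissibility follows from $\min\supp f_1\geqslant\min\supp x_1>2n$ (ensured by taking $m_1$ large) and the support separation $\max\supp f_k<\min\supp f_{k+1}$. The special-sequence condition $w(f_1)\in L_1$ and $w(f_{k+1})=\sigma\bigl((f_1,m_1),\ldots,(f_k,m_k)\bigr)$ holds by construction. Hence $\{x_k,f_k\}_{k=1}^{2n}$ is a $1$-dependent sequence with the required alternation.

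There is no real obstacle here; the only subtlety is the routine but necessary observation that the tail of a block subspace remains an infinite-dimensional block subspace, so Corollary \ref{corexactpairexist} keeps applying, and the coding function $\sigma$ grows fast enough to guarantee that the next exact pair can be found to the right of the previous functional's support.
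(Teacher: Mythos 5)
Your proposal is correct and is precisely the ``easy modification'' of the proof of Proposition \ref{dependentsequenceexist} that the paper has in mind: the same inductive construction via the coding function and Corollary \ref{corexactpairexist}, with the only change being that the successive tail block subspaces are taken alternately inside $X$ and $Y$, and $m_1$ is adjusted for a sequence of length $2n$. No further comment is needed.
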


\begin{prp} Let $\{(x_k,f_k)\}_{k=1}^{2n}$ be a 1-dependent sequence in $\X$ and
set $y_k = x_{2k-1} - x_{2k}$, for $k=1,\ldots,n$. Then we have
that:
\begin{enumerate}

\item[(i)] $\frac{1}{n}\|\sum_{k=1}^{2n}x_k\| \geqslant 1$

\item[(ii)] $\frac{1}{n}\|\sum_{k=1}^ny_k\| \leqslant
\frac{232}{n}$
\end{enumerate}\label{prop5.2}
\end{prp}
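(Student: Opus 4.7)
For part (i), the witnessing functional is $f = \tfrac{1}{2}\sum_{k=1}^{2n} f_k$. By condition (iii) of Definition \ref{defdependent}, $f$ is a type II functional in $W$. The support conditions---specifically $\min\supp x_k \leqslant \min\supp f_k$, $\max\supp x_k \leqslant \max\supp f_k$ (from the exact pair definition), and $\max\supp f_k < \min\supp x_{k+1}$ (from condition (ii))---force $\supp f_k \subset [\min\supp x_k,\min\supp x_{k+1})$, and hence $f_k(x_j) = \delta_{k,j}$. Therefore $f\bigl(\sum_{k=1}^{2n} x_k\bigr) = n$, giving $\|\sum_{k=1}^{2n} x_k\| \geqslant n$.

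For part (ii), I plan to show $|h(\sum_{k=1}^n y_k)| \leqslant 232$ for every $h \in W$, exploiting two features of the dependent sequence: the super-exponential growth of $\{m_k\}$ produced by $\sigma$, and the symmetry $y_k = x_{2k-1}-x_{2k}$. As a preliminary, $\|y_k\| \leqslant 58$ by Remark \ref{remestexactpair}, and each $x_j$ is the normalization of a $(4,1,m_j)$-exact vector. For a type I$_\alpha$ functional $h$ with $w(h) < m_{2k-1}$, Proposition \ref{cor3.9} gives $|h(y_k)| \leqslant 58/2^{w(h)}$, while for $w(h) \geqslant m_{2k-1}$ the $\mathcal{S}_1$-admissibility of $h$'s constituent averages confines the nontrivial interaction to a bounded number of indices. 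Averages are handled by Lemmas \ref{lem3.7} and \ref{lem3.15}. Together these cases contribute only $O(1)$ to the bound.

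The delicate case is $h = \tfrac{1}{2}\sum_{j=1}^d h_j$ of type II. Compare the special sequence $\{h_j\}$ with $\{f_k\}$: injectivity of $\sigma$ yields a (possibly empty) maximal initial segment of length $k_0$ on which $h_j = f_j$, and then $h_{k_0+1}\neq f_{k_0+1}$. On this segment $h_j = f_j$ has $\supp h_j \subset [\min\supp x_j,\min\supp x_{j+1})$, so $h_j(x_\ell) = \delta_{j,\ell}$; on $y_k = x_{2k-1}-x_{2k}$ the contributions from $h_{2k-1}$ and $h_{2k}$ cancel exactly when both lie in the agreement segment. The single boundary $y_k$ straddling $k_0$ costs at most $29$. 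At the divergence index $j = k_0 + 1$, the weight still matches ($w(h_{k_0+1}) = m_{k_0+1}$ by $\sigma$'s construction) but the functional differs, giving one $O(1)$ contribution. For $j \geqslant k_0 + 2$, injectivity of $\sigma$ forces $w(h_j)\neq m_k$ for every $k$, and Corollary \ref{corx3.14} together with Lemma \ref{lem3.13} and the growth $\sum_k 2^{-m_k} < 1/1000$ yield a total contribution summable to $O(1)$.

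The main obstacle is the bookkeeping in part (ii), specifically the type II case: identifying the single boundary index where cancellation fails and verifying that the post-divergence weight mismatches produce fast enough decay to sum over the $n$ terms. The calibration $m_1 > 4n\cdot 2^{2n}$ from Definition \ref{defdependent}(i) is essential, as it guarantees that the residual $O(n/2^{m_1})$ RIS-type error terms throughout the computation are dominated by absolute constants independent of $n$. Assembling all contributions yields $\|\sum_{k=1}^n y_k\| \leqslant 232$, equivalently $\tfrac{1}{n}\|\sum_{k=1}^n y_k\| \leqslant 232/n$.
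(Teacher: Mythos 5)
Your part (i) is exactly the paper's argument. For part (ii) you take a genuinely different route. The paper does not estimate $h(\sum_k y_k)$ functional by functional: it normalizes $y_k' = \frac{1}{58}y_k$, sets $j_k = m_{2k-1}-2$, and verifies the four hypotheses of Proposition \ref{prop4.6} (the general $\ell_\infty^n$-criterion), which then yields $\|\sum_k y_k'\| \leqslant 4$, i.e. the constant $232 = 58\cdot 4$. Hypotheses (i)--(iii) come from Lemmas \ref{lem3.7}, \ref{lem3.15} and the coding; the heart is hypothesis (iv), that any type II functional separating three of the $y_k'$ is small on one of them. That is verified by the dichotomy you describe: either the first weight hitting $y_{k_3}'$ equals $m_{2k_3-1}$, in which case injectivity of $\sigma$ forces $g$ to coincide (up to an interval restriction) with $\frac{1}{2}\sum f_k$ on $\ran y_{k_2}'$ and the cancellation $f_{2k_2-1}(x_{2k_2-1}) = f_{2k_2}(x_{2k_2})$ gives $g(y_{k_2}')=0$; or it does not, and Corollary \ref{corx3.14} applies. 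So your agreement-segment/divergence analysis is the right content, but the paper only needs it to produce smallness on \emph{one} of any three separated vectors; the summation over all $n$ terms is then done once and for all inside Proposition \ref{prop4.6}.

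The concrete difficulty with your direct version is the assembly. First, the cases are not independent: a type I$_\al$ functional is built from $\al$-averages, which are convex combinations of arbitrary elements of $W$, including type II functionals spread across many $y_k$; a case analysis ``by type'' therefore only closes as an induction over the generation $\{W_m\}$ of the norming set (Remark \ref{remark1.2}), which your sketch omits. Second, for a type I$_\al$ functional $h$ the per-vector bound $|h(y_k)| \leqslant 58/2^{w(h)}$ from Proposition \ref{cor3.9} cannot simply be summed over $k=1,\ldots,n$ (that gives $58n/2^{w(h)}$, not $O(1)$); what is actually needed is that at most one index $k_0$ has $w(h)$ in its critical window $[j_{k_0}, j_{k_0+1})$, with the indices above $k_0$ controlled collectively by the very-fast-growing-averages estimates (each contributing $\frac{1}{n2^n}$, hence $2^{-n}$ in total) and those below by the condition $2^{-j_{k+1}}\max\supp y_k' < 2^{-n}$. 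This is precisely the bookkeeping Proposition \ref{prop4.6} performs, and invoking it after checking its hypotheses is both shorter and safer than redoing it inline.
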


\begin{proof}
Since $\frac{1}{2}\sum_{k=1}^{2n}f_k$ is a type II functional in
$W$, it immediately follows that
$\frac{1}{n}\|\sum_{k=1}^{2n}x_k\| \geqslant
\frac{1}{2n}\sum_{k=1}^{2n}f_k(x_k) = 1$.

By Remark \ref{remestexactpair} it follows that $1 \leqslant \|y_k\| \leqslant 58$, for $k=1,\ldots,n$.

Set $y_k^\prime = \frac{1}{58}y_k$ and $j_k = m_{2k-1}-2$ for $k=1,\ldots,n$. We will show that the assumptions of Proposition \ref{prop4.6} are satisfied. From this, it will follow
that $\frac{1}{n}\|\sum_{k=1}^ny_k\| \leqslant 58\frac{4}{n}$,
which is the desired result.

The first and second assumptions, follow from Lemmas \ref{lem3.7}
and \ref{lem3.15} respectively and the definition of the
1-dependent sequence.

The third assumption follows from the fact that, by the definition
of the 1-dependent sequence, $\max\supp f_k > \max\supp x_k$, for
$k=1,\ldots,2n$ and the definition of the coding function
$\sigma$.

It remains to be proven that the fourth assumption is also
satisfied. Let $1\leqslant k_1<k_2<k_3\leqslant n$ and $g =
\frac{1}{2}\sum_{j=1}^dg_j$ be a functional of type II that
separates $y_{k_1}^\prime,y_{k_2}^\prime$ and $y_{k_3}^\prime$.

Set $j_0 = \min\{j: \ran g_j\cap\ran y_{k_3}^\prime\neq\varnothing\}$ and
assume first that $w(f_{j_0}) = m_{2k_3-1}$ Since $\supp
g\cap\supp y_{k_1}\neq\varnothing$, it follows that $g_{j_0-1} =
f_{2k_3-2}$ and there exists $I$ an interval of the naturals,
$\ran y_{k_2}^\prime\subset I$, such that $g =
I(\frac{1}{2}\sum_{k=1}^{j_0 - 1}f_k)$. This yields that
$g(y_{k_2}^\prime) = 0$.

Otherwise, if $w(f_{j_0}) \neq m_{2k_3-1}$, set $g^\prime =
g|_{\ran y_{k_3}^\prime}$ and Corollary \ref{corx3.14} yields the
following.
\begin{equation*}
|g^\prime(y_{k_3}^\prime)| <
\frac{2\cdot4}{58}\frac{29}{28}\big(\frac{1}{2^{m_{2k_3-1}}} +
\frac{1}{2^{2m_{2k_3-1}}} +
\sum_{\substack{j\in\widehat{w}(g^\prime):\\w(g_j)<n}}\frac{4}{2^{w(g_j)}}\big)
\end{equation*}
Since $g$ separates $y_{k_1}, y_{k_2}$ and $y_{k_3}$, we have that
$\min\widehat{w}(g^\prime) > p_0 = \min\supp x_1$,
therefore
\begin{equation*}
\sum_{\substack{j\in\widehat{w}(g^\prime):\\w(g_j)<n}}\frac{1}{2^{w(g_j)}}
< \sum_{p > p_0}\frac{1}{2^p} = \frac{1}{2^{p_0}} \leqslant \frac{1}{2^{32\cdot2^{2m_1}}}
\end{equation*}
By the choice of $m_1$, we conclude that $|g(y_{k_3}^\prime)| < \frac{1}{n2^{n}}$, which means that the fourth assumption is
satisfied.

\end{proof}

The next proposition is proved by using similar arguments.

\begin{prp} Let $\{(x_k,f_k)\}_{k=1}^n$ be a 0-dependent sequence in $\X$.
Then we have that:
\begin{equation*}
\frac{1}{n}\|\sum_{k=1}^nx_k\| \leqslant \frac{112}{n}
\end{equation*}\label{prop5.3}
\end{prp}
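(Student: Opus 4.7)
The plan is to reduce to Proposition \ref{prop4.6} after normalization, paralleling the upper estimate in the proof of Proposition \ref{prop5.2}. Since each $x_k$ is a $(4,1,m_k)$ exact vector, Remark \ref{remexactest} gives $\|x_k\| < 28$. I would set $x_k' = x_k/28$ and $j_k = m_k - 2$; the condition $m_1 > 4n 2^{2n}$ in the definition of a 0-dependent sequence ensures $j_1 \geqslant n+3$. Once the four hypotheses of Proposition \ref{prop4.6} are verified for $\{x_k'\}_{k=1}^n$ with these $j_k$, we obtain $\|\sum_{k=1}^n x_k'\| \leqslant 3 + 3/2^n \leqslant 4$ for $n \geqslant 2$ (and the case $n=1$ is immediate from $\|x_1\| < 28$), whence $\|\sum_{k=1}^n x_k\| \leqslant 112$.

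Hypotheses (i), (ii), (iii) of Proposition \ref{prop4.6} go through routinely. Indeed, Lemmas \ref{lem3.7} and \ref{lem3.15} applied to the exact vector $x_k$ bound sums of very fast growing $\al$- or $\be$-averages by quantities involving $1/s(\al_1)$, $1/s(\be_1)$, and $1/2^{m_k}$; since $s(\al_1),s(\be_1)$ must exceed $\min\supp x_{k_0} \geqslant 32 \cdot 2^{2m_{k_0}}$ and $m_{k_0} \geqslant m_1 > 4n 2^{2n}$, all these quantities are far below $1/(n 2^n)$ after dividing by $28$. Hypothesis (iii) follows from $\max\supp x_k \leqslant \max\supp f_k$ together with the coding growth $m_{k+1} > 2^{m_k}\max\supp f_k$.

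The nontrivial verification, and the main obstacle, is hypothesis (iv). Fix $1 \leqslant k_1 < k_2 < k_3 \leqslant n$ and a type II functional $g = \frac{1}{2}\sum_{j=1}^d g_j$ separating $x_{k_1}', x_{k_2}', x_{k_3}'$, and let $j_0 = \min\{j: \ran g_j \cap \ran x_{k_3} \neq \varnothing\}$; the separation condition supplies a $j_0'' < j_0$ with $g_{j_0''}$ hitting $x_{k_2}$, so in particular $j_0 \geqslant 2$. I would split on whether $w(g_{j_0}) = m_{k_3}$. If yes, the injectivity of $\sigma$ forces $g_j = f_j$ for every $j < j_0 = k_3$; since the sequence is 0-dependent, $f_k(x_k) = 0$ for every $k$, and combined with the support structure of the dependent sequence this gives $f_j(x_{k_2}) = 0$ for every $j$. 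For $j \geqslant k_3$ we have $\min\supp g_j > \max\supp f_{k_3-1} \geqslant \max\supp x_{k_2}$, so these terms contribute zero, and hence $g(x_{k_2}') = 0$. If no, Remark \ref{remark1.1} combined with the one-to-oneness of $\sigma$ yields $\widehat{w}(g|_{\ran x_{k_3}}) \cap \{m_{k_3}, \ldots, 2^{2m_{k_3}}\} = \varnothing$, while the admissibility of the special sequence of $g$ gives $w(g_{j_0}) > \max\supp g_{j_0-1} \geqslant \max\supp g_{j_0''} \geqslant \min\supp x_1$, an enormous quantity; applying Corollary \ref{corx3.14} to the $(4,1,m_{k_3})$ exact vector $x_{k_3}$ then yields $|g(x_{k_3}')| < 1/(n 2^n)$. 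Conceptually, the 0-dependent property $f_k(x_k) = 0$ plays the role played in Proposition \ref{prop5.2} by the telescoping cancellation $f_{2k-1}(x_{2k-1}) - f_{2k}(x_{2k}) = 0$ for the difference vectors $y_k$, which makes the present case analysis structurally parallel but even cleaner.
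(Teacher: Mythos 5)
Your proposal is correct and takes essentially the same route as the paper: the paper omits the proof of this proposition, saying only that it follows ``by using similar arguments'' to Proposition \ref{prop5.2}, and your reduction to Proposition \ref{prop4.6} via the normalization $x_k/28$, the choice $j_k = m_k-2$, and the two-case verification of hypothesis (iv) (injectivity of $\sigma$ giving $g(x_{k_2}')=0$ when $w(g_{j_0})=m_{k_3}$, and Corollary \ref{corx3.14} otherwise) is precisely that adaptation, with the substitution of the $0$-exact-pair cancellation $f_{k_2}(x_{k_2})=0$ for the telescoping cancellation used for the difference vectors $y_k$. The constant $112 = 28\cdot 4$ comes out exactly as in the paper's statement.
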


We pass to the main structural property of $\X$.

\begin{thm} The space $\X$ is hereditarily indecomposable.\label{cor5.4}
\end{thm}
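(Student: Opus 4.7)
The plan is to use the standard characterization of hereditarily indecomposable spaces: it suffices to show that for any pair of block subspaces $X, Y$ of $\X$ and any $\delta > 0$, there exist vectors $u \in X$ and $v \in Y$ with $\|u - v\| < \delta \|u + v\|$. A standard perturbation argument (any infinite dimensional closed subspace contains, up to arbitrarily small perturbation, a normalized block sequence) then upgrades this to arbitrary infinite dimensional subspaces and yields the HI property.

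Fix block subspaces $X, Y \subset \X$ and $n \inn$. By Corollary \ref{remarkforhi}, we may select a $1$-dependent sequence $\{(x_k, f_k)\}_{k=1}^{2n}$ in $\X$ such that $x_{2k-1} \in X$ and $x_{2k} \in Y$ for $k = 1, \ldots, n$. Set
\begin{equation*}
u = \sum_{k=1}^n x_{2k-1} \in X, \qquad v = \sum_{k=1}^n x_{2k} \in Y,
\end{equation*}
so that $u + v = \sum_{k=1}^{2n} x_k$ and $u - v = \sum_{k=1}^n (x_{2k-1} - x_{2k}) = \sum_{k=1}^n y_k$ in the notation of Proposition \ref{prop5.2}.

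Proposition \ref{prop5.2}(i) gives the lower estimate $\|u + v\| \geqslant n$, obtained by evaluating the type II functional $\tfrac{1}{2}\sum_{k=1}^{2n} f_k$ on $u + v$. Proposition \ref{prop5.2}(ii) gives the upper estimate $\|u - v\| \leqslant 232$. Combining these,
\begin{equation*}
\frac{\|u - v\|}{\|u + v\|} \leqslant \frac{232}{n}.
\end{equation*}
Since $n$ was arbitrary, we can make this ratio smaller than any prescribed $\delta > 0$, which establishes the desired property for block subspaces and hence the HI property of $\X$.

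The real work of course has already been carried out in the preceding sections: the main obstacle was proving the upper estimate of Proposition \ref{prop5.2}(ii), which required the full machinery of $\alpha$-averages, $\beta$-averages, the basic inequality, the coding function, and the $\ell_\infty^n$ criterion of Proposition \ref{prop4.6}. Given those tools, the HI property itself falls out by the standard dependent-sequence argument sketched above.
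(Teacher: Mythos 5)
Your proof is correct and follows essentially the same route as the paper: both invoke Corollary \ref{remarkforhi} to produce a $1$-dependent sequence alternating between the two block subspaces and then apply the two estimates of Proposition \ref{prop5.2} to make the ratio $\|u-v\|/\|u+v\|$ at most $232/n$. The only cosmetic difference is that the paper normalizes by $1/n$ while you work with the unnormalized sums, which is immaterial since only the ratio matters.
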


\begin{proof}
It is enough to show that for $X, Y$ block subspaces of $\X$, for
any $\e>0$, there exist $x\in X$ and $y\in Y$, such that
$\|x+y\|\geqslant 1$ and $\|x-y\| < \e$. Let $n\inn,$ such that
$\frac{232}{n}<\e$.

By Corollary \ref{remarkforhi} there exists a 1-dependent sequence
$\{x_k,f_k,\}_{k=1}^{2n}$, such that
$x_{2k-1}\in X$ and $x_{2k}\in Y$ for $k=1,\ldots,n$.

Set $x = \frac{1}{n}\sum_{k=1}^nx_{2k-1}$ and $y =
\frac{1}{n}\sum_{k=1}^nx_{2k}$. By applying Proposition
\ref{prop5.2}, the result follows.

\end{proof}

\subsection*{The structure of $\mathcal{L}(Y,\X)$}
For $Y$ a closed subspace of $\X$ and $T:Y\rightarrow\X$ we show
that $T = \la I_{Y,\X} + S$ with $S:Y\rightarrow \X$ strictly
singular.

\begin{prp} Let $Y$ be a subspace of
$\X$ and $T:Y\rightarrow \X$ be a linear operator, such that there
exists $\{x_k\}_{k\inn}$ a sequence in $Y$ generating a $c_0$
spreading model and $\lim\sup\dist(Tx_k,\mathbb{R}x_k) > 0$. Then
$T$ is unbounded.\label{prop5.5}
\end{prp}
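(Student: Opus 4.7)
Proceed by contradiction: assume $T$ is bounded with $\|T\|=M$. After passing to a subsequence and perturbing $\{x_k\}$ to a block sequence (possible because $\X$ is reflexive with a basis), we may take $\{x_k\}$ to be a normalized block sequence in $Y$, weakly null, generating a $c_0$ spreading model, with $\dist(Tx_k,\mathbb{R}x_k)\ge\delta>0$ for every $k$. For each $k$ pick $\lambda_k$ realizing $\|Tx_k-\lambda_k x_k\|=\dist(Tx_k,\mathbb{R}x_k)$; boundedness of $T$ forces $(\lambda_k)$ bounded, so after a further subsequence $\lambda_k\to\lambda$. Set $v_k:=Tx_k-\lambda x_k=Sx_k$, where $S:=T-\lambda I_{Y,\X}$. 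Then $(v_k)$ is bounded, seminormalized with $\|v_k\|\ge\delta/2$ eventually, and weakly null (as the image of a weakly null sequence under the bounded operator $S$). By Corollary \ref{cor4.8}, after a further subsequence, $(v_k)$ generates either an $\ell_1$ or a $c_0$ spreading model.

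The $\ell_1$ case yields an immediate contradiction. There is $c>0$ such that $\|\sum_{i=1}^nv_{k_i}\|\ge cn$ whenever $n\le k_1<\cdots<k_n$. But $\sum v_{k_i}=T\sum x_{k_i}-\lambda\sum x_{k_i}$, and since $(x_k)$ generates a $c_0$ spreading model, Proposition \ref{cor4.7} gives $\|\sum x_{k_i}\|\le 4$, whence $\|\sum v_{k_i}\|\le 4(M+|\lambda|)$, contradicting $cn\to\infty$.

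In the $c_0$ case, both $(x_k)$ and $(v_k=Sx_k)$ generate $c_0$ spreading models. By Proposition \ref{prop4.9} we can form $(m,1)$-exact pairs in each of $[x_k]\subset Y$ and $[v_k]\subset\X$, and Corollary \ref{remarkforhi} lets us combine them into a $1$-dependent sequence $(z_j,g_j)_{j=1}^{2n}$ with $z_{2j-1}\in[x_k]$ and $z_{2j}=Sy_{2j}\in[v_k]$ for some $y_{2j}\in[x_k]$. Proposition \ref{prop5.2} then gives $\|\tfrac1n\sum_{j=1}^n(z_{2j-1}-z_{2j})\|\le 232/n$ while $\|\tfrac1n\sum_{j=1}^{2n}z_j\|\ge 1$. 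Setting $u=\tfrac1n\sum z_{2j-1}$ and $u'=\tfrac1n\sum y_{2j}$ (both in $[x_k]\subset Y$), we obtain $\|u-Su'\|\to 0$ while $\|u+Su'\|\ge 1$, hence $\|u\|,\|Su'\|\ge 1/3$ for $n$ large.

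The principal obstacle is converting this near-equality in the $c_0$ subcase into an explicit contradiction. The resolution should exploit the interaction between the coding $\sigma$---which rigidly ties the weights of the exact pairs on the $[x_k]$ side and on the $[v_k]$ side through the $1$-dependent structure---and the boundedness of $S$: the type II functionals in $W$ witnessing $\|Su'\|\ge 1/3$ (whose weights match the $[v_k]$-side exact pairs) cannot be matched on the $[x_k]$ side at the corresponding weights forced by $\sigma$, so that the simultaneous requirements $u\approx Su'$ and $\|u\|+\|Su'\|\gtrsim 1$ become incompatible with the HI-type estimates satisfied by dependent sequences, producing the contradiction.
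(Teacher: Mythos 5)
Your setup and the $\ell_1$ dichotomy are fine, but the $c_0$ case --- which is the entire substance of this proposition --- is not proved; you explicitly flag it as an ``obstacle'' and gesture at a resolution that does not work as described. The configuration you reach ($u, u'\in[x_k]$ with $\|u-Su'\|\leqslant 232/n$ and $\|u\|,\|Su'\|\geqslant 1/3$) is not contradictory by itself: it merely says $S$ acts approximately as the identity on certain averages, which is perfectly consistent with $S$ being bounded. A $1$-dependent sequence is the wrong tool here, because Proposition \ref{prop5.2} rewards you for making the odd and even blocks \emph{cancel}, whereas what you need is a structure in which the \emph{domain} vectors are small while their \emph{images} stay large.

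The missing ideas are two. First, the hypothesis $\dist(Tx_k,\mathbb{R}x_k)>\de$ must be converted into norming functionals: using Hahn--Banach together with the norm-density of $W$ in $B_{\X^*}$ (which is where reflexivity of $\X$ enters), one produces $f_k\in W$ with $f_k(x_k)=0$, $f_k(Tx_k)>\de/5$, and $\ran f_k$ contained in $\ran(\ran x_k\cup\ran Tx_k)$. Second, one uses $(n,0)$-exact pairs and \emph{$0$-dependent} sequences (Definition \ref{defexactpair}, Definition \ref{defdependent}, Proposition \ref{prop5.3}) rather than $1$-dependent ones: following the construction of Proposition \ref{prop4.9} one builds $(4,1,m_k)$ exact vectors $z_k$ in $[x_j]_j$ together with type I$_\al$ functionals $g_k$ of weight $m_k$ satisfying $g_k(z_k)=0$ but $g_k(Tz_k)>\de/5$, with the weights $m_k$ linked by the coding $\sigma$ so that $\frac{1}{2}\sum_{k=1}^n g_k$ is a legitimate type II functional. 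Then Proposition \ref{prop5.3} gives $\frac1n\|\sum_{k=1}^n z_k\|\leqslant 112/n$, while the type II functional gives $\frac1n\|\sum_{k=1}^n Tz_k\|>\de/10$, so $\|T\|>n\de/1120$ for every $n$ --- the contradiction. Your vague appeal to a mismatch of weights between the $[x_k]$ side and the $[v_k]$ side misses the point: the single functional $g_k$ acts on both $z_k$ and $Tz_k$ simultaneously (its range covers both supports), and the weights are tied together by one special sequence, not played against each other. Incidentally, your preliminary reduction to $S=T-\la I$ is harmless but unnecessary; the paper's argument runs directly on $T$ and the distance hypothesis.
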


\begin{proof}
Passing, if necessary, to a subsequence, there exists $1 > \de >
0$, such that $\dist(Tx_k,\mathbb{R}x_k) > \de$, for any $k\inn$.

Since $\{x_k\}_{k\inn}$ generates a $c_0$ spreading model, it is
weakly null. Set $y_k = Tx_k$ and assume that $T$ is bounded. It
follows that passing, if necessary, to a subsequence of
$\{x_k\}_{k\inn}$, then $\{y_k\}_{k\inn}$ also generates a $c_0$
spreading model.

We may assume that $\{x_k\}_{k\inn}$, as well as $\{y_k\}_{k\inn}$
are block sequences with rational coefficients. And $\lim_k\|x_k\|
= 1$, as well as $\lim_k\|y_k\| = 1$.

If this is not the case pass, if necessary, to a further
subsequence of $\{x_k\}_{k\inn}$, such that both $\{x_k\}_{k\inn}$
and $\{y_k\}_{k\inn}$ are equivalent to some block sequences with
rational coefficients $\{x_k^\prime\}_{k\inn},
\{y_k^\prime\}_{k\inn}$ respectively, and moreover
$\lim_k\|x_k^\prime\| = 1$, as well as $\lim_k\|y_k^\prime\| = 1$.
Set $Y^\prime = [\{x_k^\prime\}_{k\inn}]$ and
$T^\prime:Y^\prime\rightarrow\X$, such that $T^\prime x_k^\prime =
y_k^\prime$. It is easy to check that $T^\prime$ is also bounded
and $\dist(T^\prime x_k^\prime,\mathbb{R}x_k^\prime)
> \de^\prime$, for some $\de^\prime > 0$.

Set $I_k = \ran(\ran x_k\cup\ran y_k)$ and passing, if necessary,
to a subsequence, we have that $\{I_k\}_{k\inn}$ is an increasing
sequence of intervals of the naturals.

We will choose $\{f_k\}_{k\inn}\subset W$, such that $f_k(y_k) >
\frac{\de}{5}$, $f_k(x_k) = 0$ and $\ran f_k \subset I_k$, for all
$k\inn$.

The Hahn-Banach Theorem, yields that for all $k\inn$, there exists
$f^\prime_k\in B_{\X^*}$, such that $f^\prime_k(y_k) > \de$,
$f_k^\prime(x_k) = 0$ and $\ran f_k^\prime \subset I_k$, for all
$k\inn$.

By the fact that $\X$ is reflexive, it follows that $W$ is norm
dense in $B_{\X^*}$, therefore there exists $f_k^{\prime\prime}\in
W$ with $\|f_k^{\prime} - f_k^{\prime\prime}\| < \frac{\de}{4}$
and $\ran f_k^{\prime\prime}\subset I_k$, for all $k\inn$.

It follows that $f_k^{\prime\prime}(y_k) > \frac{3\de}{4},
|f_k^{\prime\prime}(x_k)| < \frac{\de}{4}$ and
$f_k^{\prime\prime}(x_k)$ is rational, for all $k\inn$.

Furthermore, there exists $g_k\in W$, such that $g_k(x_k) > 1 -
\frac{\de}{4}$, $g_k(x_k)$ is rational and $\ran g_k\subset I_k$,
for all $k\inn$.

Set $f_k = \frac{1}{2}(f_k^{\prime\prime} -
\frac{f_k^{\prime\prime}(x_k)}{g_k(x_k)}g_k)$. By doing some easy
calculations, it follows that the $f_k$ are the desired
functionals.

For the rest of the proof we may assume that the $\{x_k\}_k$ are normalized.

By copying the proof of Proposition \ref{prop4.9}, for any
$k_0\inn, n\inn$, there exists $F$ a finite subset of
the naturals with $\min F \geqslant k_0$ and $\{c_k\}_{k\in F}$
such that
\begin{enumerate}

\item[(i)] $z = 2^n\sum_{k\in F}c_kx_k$ is a $(4,1,n)$ exact vector

\item[(ii)] There exists a functional $f$ of type I$_\al$ with
weight $w(f) = n$ such that $f(z) = 0, \max\supp f > \max\supp z$
and if $w = 2^n\sum_{k\in F}c_ky_k$, then $f(w) > \frac{\de}{5}$.

\end{enumerate}

Using the above fact and arguing in the same way as in the proof of Proposition \ref{dependentsequenceexist},
for some $n\inn$, we construct a sequence $\{z_k\}_{k=1}^n$ and
$\{g_k\}_{k=1}^n$ such that $\{(z_k, g_k)\}_{k=1}^n$ is
0-dependent and if $w_k = Tz_k$, then $g_k(w_k) > \frac{\de}{5}$
and $\ran g_k\cap\ran w_m = \varnothing$, for $k\neq m$.

Then $f = \frac{1}{2}\sum_{k=1}^ng_k$ is a functional of type II
and $\frac{1}{n}\|\sum_{k=1}^nw_k\| \geqslant
\frac{1}{2n}\sum_{k=1}^ng_k(w_k) > \frac{\de}{10}$.

Moreover, Proposition \ref{prop5.3} yields that
$\frac{1}{n}\|\sum_{k=1}^nz_k\| \leqslant\frac{112}{n}$. It
follows that $\|T\| > \frac{n\cdot\de}{1120}$. Since $n$ was
randomly chosen, $T$ cannot be bounded, a contradiction which
completes the proof.

\end{proof}

In \cite{F}, it is proven that if $X$ is a hereditarily
indecomposable complex Banach space, $Y$ is a subspace of $X$ and
$T:Y\rightarrow X$ is a bounded linear operator, then there exists
$\la\in\mathbb{C}$, such that $T - \la I_{_{Y,X}}: Y \rightarrow
X$ is strictly singular. Here we prove a similar result for $\X$.

\begin{thm} Let $Y$ be an infinite dimensional closed subspace of
$\X$ and $T:Y\rightarrow \X$ be a bounded linear operator. Then
there exists $\la\in\mathbb{R}$, such that $T - \la I_{_{Y,\X}}: Y
\rightarrow \X$ is strictly singular.\label{prop5.6}
\end{thm}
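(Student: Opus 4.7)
My plan is to adapt the standard HI scheme, using the $c_0$-spreading-model apparatus of $\X$ developed in Section 4 together with Proposition~\ref{prop5.5}. First, by Corollary~\ref{cor4.10}, $Y$ contains a normalized basic sequence $\{x_k\}_{k\inn}$ generating a $c_0$ spreading model; a routine perturbation lets me assume $\{x_k\}_{k\inn}$ is a block sequence of the basis of $\X$. Since $T$ is bounded, the contrapositive of Proposition~\ref{prop5.5} yields $\limsup_k \dist(Tx_k,\mathbb{R}x_k)=0$; passing to a subsequence I obtain scalars $\la_k\in\mathbb{R}$ with $\|Tx_k-\la_k x_k\|\to 0$, and $\{\la_k\}_k$ is bounded (by roughly $\|T\|$ for large $k$, using $\|x_k\|=1$).

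The first substantive step is showing $\la_k\to\la\in\mathbb{R}$. Suppose not; extract subsequences with $x_{n_1}<x_{m_1}<x_{n_2}<x_{m_2}<\cdots$, $\la_{n_k}\to\la_1$, $\la_{m_k}\to\la_2$, $\la_1\neq\la_2$. The block vectors $y_k=x_{n_k}+x_{m_k}$ are seminormalized (lower bound from basicness, upper bound from the $c_0$ spreading model of $\{x_k\}_{k\inn}$). Applying Corollary~\ref{cor3.17} and Proposition~\ref{cor4.7}, a further block subsequence of $\{y_k\}_{k\inn}$ generates a $c_0$ spreading model. Proposition~\ref{prop5.5} applied to this new sequence produces $c_k\in\mathbb{R}$ with $\|Ty_k-c_k y_k\|\to 0$, so that $(\la_{n_k}-c_k)x_{n_k}+(\la_{m_k}-c_k)x_{m_k}\to 0$; basicness of $\{x_k\}_{k\inn}$ forces $\la_{n_k}-c_k\to 0$ and $\la_{m_k}-c_k\to 0$, whence $\la_1=\la_2$, a contradiction.

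To finish I show $S=T-\la I_{_{Y,\X}}$ is strictly singular. Otherwise, some infinite-dimensional closed $Z\subset Y$ admits $c>0$ with $\|Sz\|\geq c\|z\|$. Pick in $Z$, via Corollary~\ref{cor4.10}, a normalized block $c_0$-spreading-model sequence $\{z_k\}_{k\inn}$ and, up to subsequencing, arrange $x_k<z_k<x_{k+1}$ in the basis of $\X$. The argument of the previous two paragraphs applied to $\{z_k\}_{k\inn}\subset Y$ supplies $\mu\in\mathbb{R}$ with $\|Tz_k-\mu z_k\|\to 0$; the lower bound $\|Sz_k\|\geq c$ then forces $|\mu-\la|\geq c>0$, so $\mu\neq\la$. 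Finally, form the interleaved block sequence $w_{2k-1}=x_k$, $w_{2k}=z_k$. A further application of Corollary~\ref{cor3.17} and Proposition~\ref{cor4.7} produces a subsequence of $\{w_j\}_{j\inn}$ generating a $c_0$ spreading model in $Y$, so the convergence step of paragraph two, applied to $\{w_j\}_{j\inn}$, yields a single limit $\nu\in\mathbb{R}$ with $\|Tw_j-\nu w_j\|\to 0$. But restricting to odd and even $j$ forces $\nu=\la$ and $\nu=\mu$ respectively, contradicting $\mu\neq\la$. Hence $T-\la I_{_{Y,\X}}$ is strictly singular.

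The main obstacle is justifying, at each of the two ``combine two sequences'' steps, that the new block sequence still generates a $c_0$ spreading model, since in general block manipulations of $c_0$-spreading-model sequences need not preserve the property. In our setting this is ultimately controlled by Corollary~\ref{cor3.17}: both constituent sequences have $\al$- and $\be$-indices equal to zero (they generate $c_0$ spreading models), and taking sums of disjointly supported pairs or interleaving sequences with disjoint asymptotic ranges preserves this vanishing up to a further subsequence, at which point Proposition~\ref{cor4.7} delivers the desired $c_0$ spreading model.
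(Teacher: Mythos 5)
Your proof is correct, but it reaches the contradiction by a genuinely different mechanism than the paper. Both arguments begin identically: extract a normalized $c_0$-spreading-model sequence $\{x_k\}_k$ in $Y$, use the contrapositive of Proposition \ref{prop5.5} to obtain scalars $\la_k$ with $\|Tx_k-\la_kx_k\|\rightarrow 0$, fix $\la$, and suppose $S=T-\la I_{_{Y,\X}}$ fails to be strictly singular, which produces a second $c_0$-spreading-model sequence $\{z_k\}_k$ carrying a scalar $\mu\neq\la$. At that point the paper builds a $1$-dependent sequence alternating between combinations of the two sequences and invokes the quantitative estimate of Proposition \ref{prop5.2} to force $\|S\|\geqslant n|\mu|/928$ for every $n$. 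You instead observe that the pairwise sums $x_k+z_k$ inherit vanishing $\al$- and $\be$-indices (immediate from the definitions by the triangle inequality, since the index conditions are stable under subsequences and under disjointly supported sums), so Proposition \ref{cor4.7} yields a $c_0$ spreading model for a subsequence of the sums, and one more application of Proposition \ref{prop5.5} forces $\la=\mu$. This is softer --- no new quantitative estimate is needed --- and it cleanly isolates the principle that the scalar attached by Proposition \ref{prop5.5} to a $c_0$-spreading-model sequence in $Y$ does not depend on the sequence; the price is that you must also prove $\la_k$ converges (the paper only takes a limit point), and the dependent-sequence machinery is still present implicitly, since the proof of Proposition \ref{prop5.5} itself uses $0$-dependent sequences.

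Two small points of hygiene. First, Corollary \ref{cor3.17} produces a further \emph{block} sequence (linear combinations), not a subsequence, so it is not the right citation; what you actually need --- and correctly justify in your last paragraph --- is that the indices of the pairwise sums already vanish, after which Proposition \ref{cor4.7} applies directly. Second, in the final step the literally interleaved sequence $w_{2k-1}=x_k$, $w_{2k}=z_k$ is the wrong object: the subsequence supplied by Proposition \ref{cor4.7} could a priori avoid all odd (or all even) indices, so ``restricting to odd and even $j$'' is not available for it. The fix is already in your own second paragraph: work with the sums $x_k+z_k$ rather than the interleaving, so that every term of every subsequence sees both constituents.
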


\begin{proof}
If $T$ is strictly singular, then evidently $\la = 0$ is the
desired scalar.

Otherwise, choose $Z$ an infinite dimensional closed subspace of
$Y$, such that $T:Z\rightarrow\X$ is an into isomorphism. Choose
$\{x_k\}_{k\inn}$ a normalized sequence in $Z$ generating a $c_0$
spreading model. Proposition \ref{prop5.5} yields that
$\lim_k\dist(Tx_k,\mathbb{R}x_k) = 0$. Choose $\{\la_k\}_{k\inn}$
scalars, such that $\lim_k\|Tx_k - \la_k x_k\| = 0$ and $\la$ a
limit point of $\{\la_k\}_{k\inn}$.

We will prove that $S = T - \la I_{_{Y,\X}}$ is strictly singular.
Towards a contradiction, suppose that this is not the case. Then
there exists $\{y_k\}_{k\inn}$ a normalized sequence in $Y$
generating a $c_0$ spreading model and $\de>0$, such that
$\|Sy_k\| = \|(T - \la I_{_{Y,\X}})y_k\| > \de$, for all $k\inn$.

As previously, we may assume that $\{x_k\}_{k\inn},
\{y_k\}_{k\inn}$ as well as $\{Sy_k\}_{k\inn}$ are all normalized
block sequences generating $c_0$ spreading models.

By Proposition \ref{prop5.5} and passing, if necessary, to a
subsequence, there exists $\mu\in\mathbb{R}$, such that
$\lim_k\|Sy_k - \mu y_k\| = 0$. Evidently $\mu\neq 0$, otherwise
we would have that $\lim_k\|Sy_k\| = 0$. Pass, if necessary, to a
further subsequence of $\{y_k\}_{k\inn}$, such that
$\sum_{k=1}^\infty\|Sy_k - \mu y_k\| < \frac{|\mu|}{232}$.

Observe that $\lim_k\|Sx_k\| = 0$ and therefore we may pass, if
necessary, to a subsequence of $\{x_k\}_{k\inn}$, such that
$\sum_{k=1}^\infty\|Sx_k\| < \frac{|\mu|}{232}$.

Arguing in the same manner as in the proof of Proposition
\ref{dependentsequenceexist}, for some $n\inn$ construct $\{z_k\}_{k=1}^{2n}$ and
$\{f_k\}_{k=1}^{2n}$ such that $z_{2k-1}$ is a linear combination
of $\{y_k\}_{k\inn}$, $z_{2k}$ is a linear combination of
$\{x_k\}_{k\inn}$ and $\{(z_k, f_k)\}_{k=1}^{2n}$ is a 1-dependent
sequence. Set $f = \frac{1}{2}\sum_{k=1}^{2n}f_k$, which is a
functional of type II in $W$.

If $w_k = z_{2k-1} - z_{2k}$, Proposition \ref{prop5.2} yields
that $\frac{1}{n}\|\sum_{k=1}^nw_k\| \leqslant \frac{232}{n}$.

On the other hand, we have that

\begin{eqnarray*}
\frac{1}{n}\|\sum_{k=1}^nSw_k\| &\geqslant&
\frac{1}{n}\big(\|\sum_{k=1}^nSz_{2k-1}\| -
\|\sum_{k=1}^nSz_{2k}\|\big)
\end{eqnarray*}
\begin{eqnarray*}
\phantom{a}&\geqslant& \frac{1}{n}\big(\|\sum_{k=1}^n\mu
z_{2k-1}\| -
\|\sum_{k=1}^n(Sz_{2k-1} - \mu z_{2k-1})\| - \frac{29|\mu|}{232}\big)\\
&\geqslant&
\frac{1}{n}\big(\frac{|\mu|}{2}\sum_{k=1}^nf_{2k-1}(z_{2k-1}) -
\frac{29|\mu|}{232} - \frac{29|\mu|}{232}\big)\\
&=& \frac{|\mu|}{2} - \frac{|\mu|}{4n} \geqslant \frac{|\mu|}{4}
\end{eqnarray*}

It follows that $\|S\| \geqslant \frac{n|\mu|}{928}$, where $n$
was randomly chosen. This means that $S$ is unbounded, a
contradiction completing the proof.

\end{proof}

\subsection*{Strictly Singular Operators}
In this subsection we study the action of strictly singular
operators on Schauder basic sequences in subspaces of $\X$.

\begin{prp} Let $Y$ be an infinite dimensional closed subspace of
$\X$ and $T:Y\rightarrow \X$ be a linear bounded operator. Then
the following assertions are equivalent.
\begin{enumerate}
\item[(i)] $T$ is not strictly singular.

\item[(ii)] There exists a sequence $\{x_k\}_{k\inn}$ in $Y$
generating a $c_0$ spreading model, such that $\{Tx_k\}_{k\inn}$
is not norm convergent to 0.
\end{enumerate}\label{prop5.7}
\end{prp}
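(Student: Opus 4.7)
The proof naturally splits into two directions, with $(\text{i}) \Rightarrow (\text{ii})$ being almost immediate. If $T$ is not strictly singular, then there is an infinite dimensional closed subspace $Z$ of $Y$ and a constant $c>0$ with $\|Tz\| \geqslant c\|z\|$ for every $z \in Z$. Corollary \ref{cor4.10} furnishes a normalized sequence $\{x_k\}_{k\inn}$ in $Z$ generating a $c_0$ spreading model, and then $\|Tx_k\| \geqslant c$ for all $k$, which gives (ii).

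For the harder direction $(\text{ii}) \Rightarrow (\text{i})$, I would begin by invoking the contrapositive of Proposition \ref{prop5.5}: since $T$ is bounded and $\{x_k\}_{k\inn}$ generates a $c_0$ spreading model, we must have $\dist(Tx_k, \mathbb{R}x_k) \to 0$. Choose scalars $\la_k$ with $\|Tx_k - \la_k x_k\| \to 0$. After passing to a subsequence we may assume $\|Tx_k\| \geqslant \de > 0$ by hypothesis. Since $\{x_k\}_{k\inn}$ is seminormalized, $\{\la_k\}_{k\inn}$ is bounded and bounded away from zero, so a further subsequence satisfies $\la_k \to \la$ for some $\la \neq 0$, and hence $\|Tx_k - \la x_k\| \to 0$.

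Next observe that $\{x_k\}_{k\inn}$ is weakly null in the reflexive space $\X$: any weak cluster point $x$ must satisfy $\|x\| \leqslant \liminf_n \|\tfrac{1}{n}\sum_{i=1}^n x_{k_i}\|$, which vanishes by the $c_0$ spreading model bound. By a standard gliding hump argument, pass to a subsequence equivalent to a block sequence of the basis of $\X$; this subsequence is Schauder basic with some basis constant $K$. Thinning further so that $\sum_k \|Tx_k - \la x_k\|$ is sufficiently small with respect to $|\la|$, $K$, and the lower seminormalization constant of $\{x_k\}_{k\inn}$, set $Z := [\{x_k\}_{k\inn}]$. For any $z = \sum c_k x_k \in Z$, the basis constant bound yields $|c_k| \leqslant C'\|z\|$ (with $C' = 2K/\inf_k \|x_k\|$), and hence
\[
\|Tz - \la z\| \leqslant \sum_k |c_k|\,\|Tx_k - \la x_k\| \leqslant C'\|z\|\sum_k\|Tx_k - \la x_k\| < \tfrac{|\la|}{2}\|z\|.
\]
Therefore $\|Tz\| \geqslant \tfrac{|\la|}{2}\|z\|$, so $T|_Z$ is bounded below and $T$ is not strictly singular.

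The main obstacle is passing from the pointwise approximation $\|Tx_k - \la x_k\| \to 0$ on the sequence to actual boundedness below on a closed infinite dimensional subspace. The decisive structural ingredient is Proposition \ref{prop5.5}, which converts the $c_0$ spreading model hypothesis into the near-scalar action of $T$ on $\{x_k\}_{k\inn}$; without it, the scalars $\la_k$ would not be available at all. Once those scalars exist, the rest is a fairly standard perturbation-by-summable-errors argument, modulo extracting a basic subsequence. A minor but essential subtlety is verifying $\la \neq 0$, which is exactly the place where the hypothesis $\|Tx_k\| \not\to 0$ enters.
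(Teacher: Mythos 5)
Your proof is correct and follows essentially the same route as the paper's: the forward direction via Corollary \ref{cor4.10}, and the reverse direction by applying Proposition \ref{prop5.5} to obtain $\la\neq 0$ with $\lim_k\|Tx_k-\la x_k\|=0$, then thinning to a summable perturbation so that $T$ is an isomorphism on $[\{x_k\}_k]$. The paper compresses your final perturbation computation into the one-line assertion that $\{x_k\}_k$ is equivalent to $\{Tx_k\}_k$; your explicit bounded-below estimate is just the standard justification of that step.
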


\begin{proof}
Assume first that $T$ is not strictly singular and let $Z$ be an
infinite dimensional closed subspace of $Y$, such that $T|_Z$ is
an isomorphism. Since any subspace of $\X$ contains a sequence
generating a $c_0$ spreading model, then so does $Z$. Since $T|_Z$
is an isomorphism, the second assertion is true.

Assume now that there exists $\{x_k\}_{k\inn}$ a sequence in $Y$
generating a $c_0$ spreading model, such that $\{Tx_k\}_{k\inn}$
does not norm converge to 0. By Proposition \ref{prop5.5} and
passing, if necessary to a subsequence, there exists $\la\neq 0$,
such that $\lim_k\|Tx_k - \la x_k\| = 0$. Passing, if necessary,
to a further subsequence, we have that $\sum_{k=1}^\infty\|Tx_k -
\la x_k\| < \infty$. But this means that $\{x_k\}_{k\inn}$ is
equivalent to $\{Tx_k\}_{k\inn}$, therefore $T$ is not strictly
singular.

\end{proof}

\begin{dfn} Let $\{x_k\}_k$ be a normalized block sequence $\X$. We say that $\{x_k\}_k$ is of rank 1, if $\adx = 0$ and $\bdx = 0$.\label{defrank1}
\end{dfn}

\begin{dfn}
Let $\{x_k\}_k$ be a normalized block sequence in $\X$. We say that $\{x_k\}_k$ is of rank 2, if it satisfies one of the following.

\begin{itemize}

\item[(i)] $\al\big(\{x_k\}_k\big) = 0$ and for every $L$ infinite subset of the natural numbers, $\be\big(\{x_k\}_{k\in L}\big) > 0$

\item[(ii)] For every $L$ infinite subset of the natural numbers $\al\big(\{x_k\}_{k\in L}\big) > 0$ and for every $C\geqslant 1, \theta > 0, \{n_j\}_j$ strictly increasing sequence of natural numbers, $F_j \subset L$ and $\{c_k^j\}_{k\in F_j}, j\inn$ such that $w_j = 2^{n_j}\sum_{k\in F_j}c_k^jx_k$ are $(C,\theta,n_j)$ vectors for every $j\inn$, we have that $\be\big(\{w_j\}_j\big) = 0$

\end{itemize}
\label{defrank2}
\end{dfn}

\begin{dfn}

Let $\{x_k\}_k$ be a normalized block sequence. We say that $\{x_k\}_k$ is of rank 3, if for every $L$ infinite subset of the natural numbers, $\al\big(\{x_k\}_{k\in L}\big) > 0$ and there exist $C\geqslant 1, \theta > 0, \{n_j\}_j$ strictly increasing sequence of natural numbers, $F_j \subset L$ and $\{c_k^j\}_{k\in F_j}, j\inn$ such that $w_j = 2^{n_j}\sum_{k\in F_j}c_k^jx_k$ are $(C,\theta,n_j)$ are vectors for every $j\inn$, and $\be\big(\{w_j\}_j\big) > 0$
\label{defrank3}
\end{dfn}

\begin{rmk}
It follows easily from the definitions above, that every normalized block sequence has a subsequence that is of some rank. Moreover, if a normalized block sequence is of some rank, then any of its subsequences is of the same rank. We would also like to point out that we can neither prove nor disprove the existence of sequences of rank 3. The failure of the existence of such sequences, would yield that the composition of any two strictly singular operators defined on a subspace of $\X$, is a compact one.\label{rmkrankhereditary}
\end{rmk}

\begin{dfn}
Let $\{x_k\}_k$ be a weakly null sequence in $\X$. If it norm null, then we say that it is of rank 0. If it is seminormalized, we say that $\{x_k\}_k$ is of rank $i$, if there exists a normalized block sequence $\{x_k^\prime\}_k$ which is of rank $i$, such that $\sum_k\|\frac{x_k}{\|x_k\|} - x_k^\prime\| < \infty$.
\label{defrankweaklynull}
\end{dfn}

\begin{rmk}
Every weakly null sequence in $\{x_k\}_k$ has a subsequence which is of some rank. Moreover Propositions \ref{prop3.5} and \ref{cor4.7} yield that $\{x_k\}_k$ has a subsequence which is of rank 1 if and only it admits a $c_0$ spreading model and it has a subsequence that is of rank 2 or 3 if and only if it admits $\ell_1$ as a spreading model.\label{rmksmrankrelation}
\end{rmk}

Proposition \ref{prop5.7} yields the following.

\begin{prp}
Let $Y$ be an infinite dimensional closed subspace of $\X$ and $T:Y\rightarrow \X$ be a strictly singular operator. Then for every $\{x_k\}_k$ weakly null sequence in $Y$ which is of rank 1, we have that $\{Tx_k\}_k$ is of rank 0.\label{prp1to0}
\end{prp}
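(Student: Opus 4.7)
The strategy is a direct reduction to Proposition~\ref{prop5.7} via the definitions of rank. I argue by contradiction: assume $\{Tx_k\}_k$ is not norm null. Passing to a subsequence, I may assume there is $\delta>0$ with $\|Tx_k\|>\delta$ for all $k$. By Definition~\ref{defrankweaklynull}, rank $1$ forces $\{x_k\}_k$ to be seminormalized, say $0<a\leq\|x_k\|\leq b$, and provides a normalized block sequence $\{x_k'\}_k\subset\X$ of rank $1$ (so $\al\big(\{x_k'\}_k\big)=\be\big(\{x_k'\}_k\big)=0$ by Definition~\ref{defrank1}) with $\sum_k\|x_k/\|x_k\|-x_k'\|<\infty$. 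Applying Proposition~\ref{cor4.7} to $\{x_k'\}_k$ and passing to a further subsequence, I may assume $\|\sum_{i=1}^n x_{k_i}'\|\leq 4$ for all $n\leq k_1<\cdots<k_n$.

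Next I transfer this $c_0$-spreading-model property from $\{x_k'\}_k$ to $\{x_k/\|x_k\|\}_k\subset Y$ via the summable perturbation. After discarding finitely many terms I may assume $\sum_k\|x_k/\|x_k\|-x_k'\|<1/2$. A standard perturbation estimate then yields, for any $n\leq k_1<\cdots<k_n$ and any scalars $\{c_i\}$,
\begin{equation*}
\Big\|\sum_{i=1}^n c_i\, x_{k_i}/\|x_{k_i}\|\Big\|\leq \Big\|\sum_{i=1}^n c_i x_{k_i}'\Big\|+\max_i|c_i|\cdot\tfrac{1}{2},
\end{equation*}
giving an upper constant (roughly $5$) for the spreading model. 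Combined with the lower estimate $\|\sum c_i x_{k_i}/\|x_{k_i}\|\|\geq \max_i|c_i|/2$, which follows from bimonotonicity of the basis of $\X$ applied to $\{x_k'\}_k$ together with the same perturbation bound, one concludes that $\{x_k/\|x_k\|\}_k$ itself generates a $c_0$ spreading model.

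Finally, Proposition~\ref{prop5.7} applied to the strictly singular operator $T$ and the sequence $\{x_k/\|x_k\|\}_k\subset Y$ forces $\|T(x_k/\|x_k\|)\|\to 0$. But by construction $\|T(x_k/\|x_k\|)\|=\|Tx_k\|/\|x_k\|\geq \delta/b>0$, a contradiction. Hence $\|Tx_k\|\to 0$, i.e.\ $\{Tx_k\}_k$ is of rank $0$. The only mildly delicate step is the transfer of the spreading model estimates through the summable perturbation; this is routine once one invokes bimonotonicity of the basis, so there is no substantial obstacle beyond bookkeeping.
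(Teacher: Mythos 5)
Your proof is correct and follows exactly the route the paper intends: the paper offers no written proof beyond the line ``Proposition \ref{prop5.7} yields the following,'' and your argument simply fills in the routine details of that reduction (rank $1$ gives, via Definition \ref{defrankweaklynull}, Proposition \ref{cor4.7} and a summable perturbation, a $c_0$ spreading model inside $Y$, to which Proposition \ref{prop5.7} applies). The perturbation bookkeeping and the use of bimonotonicity for the lower $c_0$ estimate are handled correctly.
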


\begin{prp}
Let $Y$ be an infinite dimensional closed subspace of $\X$ and $T:Y\rightarrow \X$ be a strictly singular operator. Then for every $\{x_k\}_k$ weakly null sequence in $Y$ which is of rank 2, we have that $\{Tx_k\}_k$ has no subsequence which is of rank 2 or of rank 3.\label{rank2to1}
\end{prp}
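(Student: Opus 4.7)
The strategy is to argue by contradiction. Suppose some subsequence of $\{Tx_k\}_k$ is of rank 2 or rank 3; after replacing $\{x_k\}_k$ with the corresponding subsequence and invoking Definition \ref{defrankweaklynull} together with Remark \ref{rmksmrankrelation}, I may assume (passing to further subsequences and using the standard block perturbation argument) that $\{x_k\}_k$ is a normalized block sequence in $Y$ of rank 2, that $\{Tx_k\}_k$ is a normalized block sequence in $\X$ of rank 2 or rank 3, and that both $\min\supp x_k\geqslant k$ and $\min\supp Tx_k\geqslant k$ hold for every $k$. Applying Proposition \ref{prop3.5} separately to $\{x_k\}_k$ and to $\{Tx_k\}_k$ (using that at least one of the $\al,\be$ indices is positive in each rank 2 or rank 3 case), I obtain constants $\theta,\theta'>0$ such that both sequences satisfy the $\ell_1^n$ spreading-model lower bound with constants $\theta/2^n$, respectively $\theta'/2^n$, for every $n\inn$ and every index set $F$ meeting the appropriate Schreier admissibility.

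I next construct a block sequence $\{w_j\}_j\subset Y$ of rank 1 with $w_j=2^{n_j}\sum_{k\in F_j}c_k^j x_{m_k}$ a $(C,\theta_1,n_j)$ vector and $\{n_j\}_j$ strictly increasing. If $\{x_k\}_k$ falls in case (i) of Definition \ref{defrank2}, then $\adx=0$, so Proposition \ref{prpalzeroalris} yields an $\al$-RIS subsequence, and the last part of Proposition \ref{prop3.5} produces $(C,\theta_1,n_j)$ exact vectors $w_j\in Y$; Proposition \ref{cor3.16} then gives $\al\big(\{w_j\}_j\big)=\be\big(\{w_j\}_j\big)=0$, so $\{w_j\}_j$ is of rank 1. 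If instead $\{x_k\}_k$ falls in case (ii), Proposition \ref{prop3.5} directly produces $(C,\theta_1,n_j)$ vectors in $Y$, Proposition \ref{prop3.8} gives $\al\big(\{w_j\}_j\big)=0$, and the case (ii) hypothesis forces $\be\big(\{w_j\}_j\big)=0$, so $\{w_j\}_j$ is again of rank 1. I arrange the construction so that the coefficients $c_k^j$ and the index set $F_j$ also witness the Schreier admissibility required by the $\ell_1^{n_j}$ lower bound for $\{Tx_{m_k}\}_k$, giving $\|\sum_{k\in F_j}c_k^j Tx_{m_k}\|\geqslant\theta'/2^{n_j}$ and hence $\|Tw_j\|=2^{n_j}\|\sum_{k\in F_j}c_k^j Tx_{m_k}\|\geqslant\theta'$.

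Since $\|w_j\|\leqslant 7C$ by Remark \ref{remexactest}, the normalized sequence $\tilde w_j=w_j/\|w_j\|$ is a weakly null block sequence in $Y$ of rank 1 with $\|T\tilde w_j\|\geqslant\theta'/(7C)>0$. On the other hand, $T$ is strictly singular and $\{\tilde w_j\}_j$ is of rank 1, so Proposition \ref{prp1to0} forces $\{T\tilde w_j\}_j$ to be of rank 0, that is, norm null, which contradicts $\|T\tilde w_j\|\geqslant\theta'/(7C)>0$.

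The main obstacle is the simultaneous construction in the second paragraph: the same coefficients $c_k^j$ and index set $F_j$ must witness the $(n_j,\e)$ special convex combination condition of Definition \ref{defscc} on $\{x_{m_k}\}_k$ and also the Schreier admissibility required for the spreading-model lower bound on $\{Tx_{m_k}\}_k$. Once $\min\supp x_{m_k},\min\supp Tx_{m_k}\geqslant m_k$, both support sets $\{\min\supp x_{m_k}:k\in F_j\}$ and $\{\min\supp Tx_{m_k}:k\in F_j\}$ lie in $\mathcal{S}_{n_j}$ as soon as $F_j\in\mathcal{S}_{n_j}$ as an index set, by the spreading property of the Schreier families; the coefficient condition (ii) of the basic special convex combination then follows from a routine strengthening of Proposition \ref{prop1.5} in which the coefficients are forced to be small on every subset of $F_j$ that is $\mathcal{S}_{n_j-1}$-admissible under either support assignment.
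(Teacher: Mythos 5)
Your proof is correct and follows essentially the same route as the paper's: a contradiction argument that reduces to normalized block sequences, builds $(C,\theta,n_j)$ vectors from the domain sequence which form a rank 1 sequence (with the identical case split on (i)/(ii) of Definition \ref{defrank2}, using Propositions \ref{prpalzeroalris}, \ref{cor3.16}, \ref{prop3.8}), and contradicts Proposition \ref{prp1to0} because the images remain bounded below in norm. The only cosmetic difference is that the paper arranges for the image vectors $w_j=T'z_j$ to be $(1,\theta,n_j)$ vectors themselves via the auxiliary operator $T'x_k'=y_k$, whereas you obtain $\|Tw_j\|\geqslant\theta'$ directly from the $\ell_1^{n_j}$ lower bound of Proposition \ref{prop3.5} applied to the image sequence.
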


\begin{proof}
Towards a contradiction pass to a subsequence of $\{x_k\}_k$ and assume that there exist $\{x_k^\prime\}_k$, $\{y_k\}_k$ normalized block sequences, with $\sum_k\|\frac{x_k}{\|x_k\|} - x_k^\prime\| < \infty, \sum_k\|\frac{Tx_k}{\|Tx_k\|} - y_k\| < \infty$, $\{x_k^\prime\}_k$ satisfies either (i) or (ii) from Definition \ref{defrank2} and $\{y_k\}_k$ is of either rank 2 or 3. By Remark \ref{rmksmrankrelation}, we may assume that both $\{x_k^\prime\}_k$, $\{y_k\}_k$ generate $\ell_1$ as a spreading model.

Setting $T^\prime:[\{x_k^\prime\}_k]\rightarrow \X$ with $T^\prime x_k^\prime = y_k$ for all $k\inn$, we have that $T^\prime$ is bounded and strictly singular. Arguing in a similar manner as in the proof of Proposition \ref{prop3.5}, we may choose $\{n_j\}_j$ a  strictly increasing sequence of natural numbers, $\{F_j\}_j$ a strictly increasing sequence of natural numbers and $\{c_k^j\}_{k\in F_j}, j\inn$ such that $z_j = 2^{n_j}\sum_{k\in F_j}c_k^jx^\prime_k$ and $w_j = T^\prime z_j = 2^{n_j}\sum_{k\in F_j}c_k^jy_k$ are $(1,\theta,n_j)$ vectors for every $j\inn$. Proposition \ref{prop3.8} yields that $\al\big(\{z_j\}_j\big) = 0$ as well as $\al\big(\{w_j\}_j\big) = 0$.

If $\{x_k^\prime\}_k$ satisfies (i) from Definition \ref{defrank2}, then by Proposition \ref{prpalzeroalris} we may assume that is is $(2,\{m_k\}_k)$ $\al$-RIS, therefore the $z_j$ can in fact have been chosen to be $(2,\theta,n_j)$ exact vectors. Proposition \ref{cor4.7} yields that $\be\big(\{z_j\}_j\big) = 0$. We have concluded that $\{z_j\}_j$ is of rank 1 and by Proposition \ref{prp1to0} we have that $\{w_j\}_j$ is norm null, which contradicts the fact that $\|w_j\| \geqslant \theta$.

If on the other hand,  if $\{x_k^\prime\}_k$ satisfies (ii) from Definition \ref{defrank2}, then we have that $\be\big(\{z_j\}_j\big) = 0$. Again, Proposition \ref{prp1to0} yields that $\{w_j\}_j$ is norm null, which cannot be the case.
\end{proof}

\begin{prp}
Let $Y$ be an infinite dimensional closed subspace of $\X$ and $T:Y\rightarrow \X$ be a strictly singular operator. Then every $\{x_k\}_k$ weakly null sequence in $Y$, has a subsequence $\{x_{k_n}\}_n$ such that $\{Tx_{k_n}\}_n$ is of rank 0, of rank 1, or of rank 2.\label{rank3to2}
\end{prp}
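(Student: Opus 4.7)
The plan is to dispose of the easy cases by rank and to reduce the remaining rank $3$ case to a contradiction using Propositions \ref{prp1to0} and \ref{rank2to1}. By Remark \ref{rmksmrankrelation}, after passing to a subsequence $\{x_k\}_k$ is of some rank. If the rank is $0$, then $\{x_k\}_k$, hence $\{Tx_k\}_k$, is norm null. If it is $1$, Proposition \ref{prp1to0} gives directly that $\{Tx_k\}_k$ is of rank $0$. If it is $2$, Proposition \ref{rank2to1} ensures that no subsequence of $\{Tx_k\}_k$ is of rank $2$ or $3$, so by Remarks \ref{rmksmrankrelation} and \ref{rmkrankhereditary} there is a subsequence of rank $0$ or $1$.

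The hard case is rank $3$. Arguing by contradiction, suppose $\{Tx_k\}_k$ has no subsequence of rank $0$, $1$, or $2$; then, passing to a further subsequence, $\{Tx_k\}_k$ itself is of rank $3$. Following the setup used in the proof of Proposition \ref{rank2to1}, I approximate $\{x_k/\|x_k\|\}_k$ and $\{Tx_k/\|Tx_k\|\}_k$ by normalized block sequences $\{x_k'\}_k$ and $\{y_k\}_k$, both of rank $3$, with summable norm errors, and form the bounded strictly singular operator $T':[\{x_k'\}_k]\to\X$ determined by $T'x_k'=y_k$.

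Next, using the rank $3$ structure of $\{y_k\}_k$, select parameters $\{m_j\}_j$, $\{H_j\}_j$ and $\{e_k^j\}_{k\in H_j}$ such that $v_j=2^{m_j}\sum_{k\in H_j}e_k^j y_k$ are $(C,\theta,m_j)$ vectors satisfying $\be\big(\{v_j\}_j\big)>0$. By Proposition \ref{prop3.8} we have $\al\big(\{v_j\}_j\big)=0$, so $\{v_j\}_j$ is a block sequence of rank $2$ via Definition \ref{defrank2}(i), and $\|v_j\|\geq\theta$. Mirroring the joint vector construction used in the proof of Proposition \ref{rank2to1} --- refining via Proposition \ref{prop1.5} and exploiting sufficient sparseness of the approximating subsequences so that the $(m_j,\e)$ basic s.c.c.\ condition transfers from the supports $\{\min\supp y_k\}$ to $\{\min\supp x_k'\}$ --- I simultaneously arrange that the companion block vectors $z_j=2^{m_j}\sum_{k\in H_j}e_k^j x_k'$ are $(C',\theta',m_j)$ vectors of $\{x_k'\}_k$. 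By linearity, $T'z_j=v_j$.

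Since the $z_j$ are vectors with $\{m_j\}_j$ strictly increasing, Proposition \ref{prop3.8} gives $\al\big(\{z_j\}_j\big)=0$, so $\{z_j\}_j$ is of rank $1$ or $2$. If it is of rank $1$, Proposition \ref{prp1to0} forces $\{T'z_j\}_j=\{v_j\}_j$ to be of rank $0$, contradicting $\|v_j\|\geq\theta>0$. If it is of rank $2$, Proposition \ref{rank2to1} forces $\{v_j\}_j$ to admit no subsequence of rank $2$ or $3$, contradicting that $\{v_j\}_j$ is itself of rank $2$. Either branch yields a contradiction, completing the proof. The main obstacle is precisely this joint construction: transferring the $(m_j,\e)$ s.c.c.\ condition across two different sequences of min supports while simultaneously securing the lower bound $\|z_j\|\geq\theta'$, exactly as done in the simultaneous vector production carried out in the proof of Proposition \ref{rank2to1}.
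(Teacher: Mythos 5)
Your proposal is correct and follows essentially the same route as the paper's proof: approximate by normalized block sequences, form the factored operator $T'$, use the rank-$3$ structure of the image sequence to produce $(C,\theta,n_j)$ vectors $w_j$ with positive $\be$-index, pull them back to companion vectors $z_j$ on the domain side, kill the $\al$-index of both via Proposition \ref{prop3.8}, and contradict Proposition \ref{prp1to0} or \ref{rank2to1}. The only cosmetic differences are your (harmless, redundant) preliminary case analysis on the rank of $\{x_k\}_k$ and your use of sparseness to transfer the s.c.c.\ condition, where the paper instead perturbs so that $\min\supp x_k'=\min\supp y_k$.
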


\begin{proof}
If $\{Tx_k\}_k$ has a norm null subsequence then we are ok. Otherwise, pass to a subsequence of $\{x_k\}_k$, again denoted by $\{x_k\}_k$ and choose $\{x_k^\prime\}_k$, $\{y_k\}_k$ normalized block sequences, with $\sum_k\|\frac{x_k}{\|x_k\|} - x_k^\prime\| < \infty, \sum_k\|\frac{Tx_k}{\|Tx_k\|} - y_k\| < \infty$. By passing to a further subsequence and slightly perturbing the $x_k^\prime, y_k$, we may assume that $\min\supp x_k^\prime = \min\supp y_k$ for all $k\inn$ and that $\{x_k^\prime\}_k$, $\{y_k\}_k$ are of some rank.

Setting $T^\prime:[\{x_k^\prime\}_k]\rightarrow \X$ with $T^\prime x_k^\prime = y_k$ for all $k\inn$, we have that $T^\prime$ is bounded and strictly singular. Towards a contradiction, assume that $\{y_k\}_k$ satisfies the assumption of Definition \ref{defrank3}. By Remark \ref{rmksmrankrelation}, passing to a further subsequence, we have that $\{y_k\}_k$ generates an $\ell_1$ spreading model and by the boundedness of $T^\prime$, we may assume that so does $\{x_k^\prime\}_k$. Passing to an even further subsequence, we have that both $\{x_k^\prime\}_k$ and $\{y_k\}_k$ satisfy the conclusion of Proposition \ref{prop3.5}.

Choose $C\geqslant 1, \theta > 0, \{n_j\}_j$ strictly increasing sequence of natural numbers, $F_j \subset L$ and $\{c_k^j\}_{k\in F_j}, j\inn$ such that $w_j = 2^{n_j}\sum_{k\in F_j}c_k^jy_k$ are $(C,\theta,n_j)$ vectors for every $j\inn$, and $\be\big(\{w_j\}_j\big) > 0$.

Since $\min\supp x_k^\prime = \min\supp y_k$ for all $k\inn$, we have that $z_j = 2^{n_j}\sum_{k\in F_j}c_k^jx_k^\prime$ are $(C,\theta,n_j)$ vectors for every $j\inn$.

Proposition \ref{prop3.8} yields that $\al\big(\{z_j\}_j\big) = 0$ as well as $\al\big(\{w_j\}_j\big) = 0$.

Since $\be\big(\{w_j\}_j\big) > 0$, we may pass to a subsequence of $\{w_j\}_j$ that generates an $\ell_1$ spreading model and if $w_j^\prime = \frac{w_j}{\|w_j\|}$, then $\{w_j^\prime\}_j$ satisfies (i) from Definition \ref{defrank2}, it is therefore of rank 2.

Once more, the boundedness of $T^\prime$ yields that if $z_j^\prime = \frac{z_j}{\|w_j\|}$, then $\{z_j^\prime\}_j$ generates an $\ell_1$ spreading model. Since $\al\big(\{z_j^\prime\}_j\big) = 0$, we conclude that  $\be\big(\{z_j^\prime\}_j\big) > 0$. We may therefore pass to a final subsequence of $\{z_j^\prime\}_j$ which is of rank 2. Since $T^\prime z_j^\prime = w_j^\prime$, Proposition \ref{rank2to1} yields a contradiction.

\end{proof}

\subsection*{The Invariant Subspace Property}

\begin{thm} Let $Y$ be an infinite dimensional closed subspace of
$\X$ and $Q,S,T: Y\rightarrow Y$ be strictly singular operators.
Then $QST$ is compact.\label{cor5.9}
\end{thm}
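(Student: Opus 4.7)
The plan is to show that $QST$ maps every weakly null sequence in $Y$ to a norm null sequence; since $Y$ is reflexive (being a closed subspace of the reflexive space $\X$ of Corollary \ref{cor2.24}), such an operator is automatically compact. Indeed, any bounded sequence in $Y$ admits a weakly convergent subsequence $x_{k_n}\to x$, whence $\{x_{k_n}-x\}_n$ is weakly null, and once I know that $\{QST(x_{k_n}-x)\}_n$ has a norm null subsequence I conclude norm convergence of a subsequence of $\{QSTx_{k_n}\}_n$ to $QSTx$, hence compactness.

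So fix a weakly null sequence $\{x_k\}_{k\inn}$ in $Y$. The idea is to strictly lower the rank with each successive application of $T$, $S$, and $Q$, invoking in turn the three rank-reduction propositions proved in the previous subsection. By Remark \ref{rmksmrankrelation} I may pass to a subsequence so that $\{x_k\}_{k\inn}$ is of some definite rank $r\in\{0,1,2,3\}$. Then Proposition \ref{rank3to2} extracts a subsequence on which $T$ sends rank $3$ to rank at most $2$; Proposition \ref{rank2to1} forces every subsequence of the $T$-image of a rank-$2$ sequence to have rank in $\{0,1\}$ (so after one more subsequence it has definite rank $\le 1$); and Proposition \ref{prp1to0} sends rank $1$ to rank $0$, i.e., to a norm null sequence. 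Running these reductions in the order $T$, then $S$, then $Q$, each time extracting a further subsequence of definite rank, I obtain after three steps a subsequence of $\{QSTx_k\}_{k\inn}$ of rank $0$, that is, norm null.

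The proof is therefore essentially a bookkeeping exercise: the substantive content has already been absorbed into Propositions \ref{prp1to0}, \ref{rank2to1}, and \ref{rank3to2}. The only delicate point is the sequential extraction of subsequences, ensuring that the hypothesis of each proposition is in force when the next operator is applied; but this is immediate from the fact that the class of weakly null sequences is hereditary under subsequences and that $T$, $S$, $Q$ all map $Y$ into $Y$. Since three is exactly the number of strictly-singular factors needed to chase any rank down to $0$, this is why the statement involves a composition of three operators rather than two --- the obstruction to reducing this number is precisely the (open) possibility of rank-$3$ sequences, already noted in Remark \ref{rmkrankhereditary}.
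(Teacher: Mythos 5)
Your proposal is correct and follows essentially the same route as the paper: reduce compactness to mapping weakly null sequences to norm null ones via reflexivity, then chase the rank down through Propositions \ref{rank3to2}, \ref{rank2to1} and \ref{prp1to0}, passing to subsequences at each stage. The only cosmetic difference is that you spell out the subsequence bookkeeping and the reflexivity reduction slightly more explicitly than the paper does.
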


\begin{proof}
Since $\X$ is reflexive, it is enough to show that for any weakly
null sequence $\{x_k\}_{k\inn}$, we have that $\{QSTx_k\}_{k\inn}$
norm converges to zero.

Proposition \ref{rank3to2}, yields that passing, if necessary to a subsequence, $\{Tx_k\}_k$ is of rank 0, rank 1, or rank 2. If it is of rank 0, then it is norm null and we are done. If it is or rank 1, Proposition \ref{prp1to0} yields that $\{STx_k\}_k$ is of rank 0 and as previously we are done. Otherwise, $\{Tx_k\}_k$ is of rank 2. By Proposition \ref{rank2to1}, we may pass to a further subsequence, such that $\{STx_k\}_k$ is either of rank 0, or rank 1. If it is not of rank 0, then applying Proposition \ref{prp1to0} we have that $\{QSTx_k\}_{k\inn}$
norm converges to zero and the proof is complete.
\end{proof}

\begin{cor}Let $Y$ be an infinite dimensional closed subspace of
$\X$ and $S:Y\rightarrow Y$ be a non zero strictly singular
operator. Then $S$ admits a non-trivial closed hyperinvariant
subspace.\label{cor5.10}
\end{cor}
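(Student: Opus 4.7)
The proof will follow from a dichotomy on the powers of $S$, combined with Theorem \ref{cor5.9} and the Lomonosov--Sirotkin theorem cited in the introduction (see \cite{L},\cite{Sir}). Recall that a closed subspace $Z$ of $Y$ is hyperinvariant for $S$ if $TZ\subset Z$ for every bounded linear $T:Y\to Y$ that commutes with $S$; it is non-trivial if $\{0\}\neq Z\neq Y$.

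The plan is to split on whether $S^3=0$. In the first case, since $S\neq 0$, either $S^2\neq 0$, in which case any $y$ with $S^2y\neq 0$ gives $S^2y\in\ker S$, or $S^2=0$, in which case $\operatorname{ran} S\subset \ker S$ and $\operatorname{ran} S\neq\{0\}$. In either situation $\ker S\neq\{0\}$, while $\ker S\neq Y$ because $S\neq 0$. Moreover, $\ker S$ is hyperinvariant: if $T$ commutes with $S$ and $y\in\ker S$, then $S(Ty)=T(Sy)=0$. So $\ker S$ is the desired non-trivial closed hyperinvariant subspace.

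In the second case, $S^3\neq 0$. By Theorem \ref{cor5.9} applied to $Q=S,T=S$, the operator $S^3$ is compact, hence $S^3$ is a non-zero compact operator that trivially commutes with $S$. Since $S$ is strictly singular on the infinite dimensional space $Y$, $S$ cannot be a non-zero scalar multiple of $I_Y$ (such a multiple would be an isomorphism); because $S\neq 0$ we also exclude $S=0\cdot I_Y$. Thus $S\neq\lambda I_Y$ for every $\lambda\in\mathbb{R}$, and Sirotkin's real version \cite{Sir} of Lomonosov's theorem guarantees that $S$ admits a non-trivial closed hyperinvariant subspace.

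There is no real obstacle here beyond correctly invoking the external theorem: the work has already been done in Theorem \ref{cor5.9}, which supplies the non-zero compact operator commuting with $S$ whenever $S^3\neq 0$. The only care needed is to handle the nilpotent case $S^3=0$ separately (where Lomonosov--Sirotkin is not needed and $\ker S$ is used directly), and to verify that the strict singularity of $S$, together with $S\neq 0$, rules out $S$ being a scalar operator so that the hyperinvariant conclusion of Lomonosov--Sirotkin applies.
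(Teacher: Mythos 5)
Your overall strategy is exactly the paper's: split on whether $S^3=0$, use $\ker S$ in the nilpotent case, and otherwise feed the non-zero compact operator $S^3$ supplied by Theorem \ref{cor5.9} into the Lomonosov--Sirotkin theorem. The nilpotent case is handled correctly, and in more detail than the paper gives (your observation that $S\neq 0$ and $S^3=0$ force $\ker S\neq\{0\}$ via either $S^2y$ or $\operatorname{ran}S$ is precisely the ``straightforward check'' the paper leaves to the reader).

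The one point that needs repair is the invocation of Sirotkin's theorem when $S^3\neq 0$. Since $\X$ is a real space, commuting with a non-zero compact operator and being non-scalar is not a sufficient hypothesis: the real version (Theorem 2.1 of \cite{Sir}, which is what the paper actually cites) additionally requires that $(\al I - S)^2+\be^2 I\neq 0$ for all $\al\in\mathbb{R}$ and all $\be\neq 0$. This extra condition is not cosmetic --- it is what excludes rotation-like operators, which genuinely lack non-trivial invariant subspaces over $\mathbb{R}$ --- and your argument verifies only the ``$S$ is not a scalar multiple of $I_Y$'' half. Fortunately the missing half follows from strict singularity by the same one-line reasoning you already use: if $(\al I-S)^2+\be^2I=0$, then $(\al^2+\be^2)I=2\al S-S^2$ would be strictly singular, which is impossible on the infinite dimensional space $Y$ unless $\al^2+\be^2=0$, contradicting $\be\neq 0$. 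With that sentence added, your proof coincides with the paper's.
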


\begin{proof}
Assume first that $S^3 = 0$. Then it is straightforward to check
that $\ker S$ is a non-trivial closed hyperinvariant subspace of
$S$.

Otherwise, if $S^3 \neq 0$, then Theorem \ref{cor5.9} yields that
$S^3$ is compact and non zero. Since $S$ commutes with its cube,
by Theorem 2.1 from \cite{Sir}, it is enough to check that for any
$\al,\be\in\mathbb{R}$ such that $\be\neq 0$, we have that $(\al I
- S)^2 + \be^2I \neq 0$. Since $S$ is strictly singular, it is
easy to see that this condition is satisfied.
\end{proof}

\begin{cor}Let $Y$ be an infinite dimensional closed subspace of
$\X$ and $T:Y\rightarrow Y$ be a non scalar operator. Then $T$
admits a non-trivial closed hyperinvariant
subspace.\label{cor5.11}
\end{cor}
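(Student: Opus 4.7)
The plan is to reduce immediately to the strictly singular case, where Corollary \ref{cor5.10} already supplies the hyperinvariant subspace.

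First I would apply Theorem \ref{prop5.6} to the operator $T:Y\to Y$ (viewed as an operator into $\X$) to obtain $\la\in\mathbb{R}$ such that $S:=T-\la I_Y:Y\to Y$ is strictly singular. Here one has to check the routine point that strict singularity of $T-\la I_{Y,\X}:Y\to\X$ implies strict singularity of $S:Y\to Y$; this is immediate because strict singularity is a condition on the restriction to infinite-dimensional subspaces of the domain, and failure for $S$ would supply an infinite-dimensional subspace on which $T-\la I_{Y,\X}$ is an isomorphism too.

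Next, since $T$ is non-scalar, $S\neq 0$. Corollary \ref{cor5.10} then produces a non-trivial closed hyperinvariant subspace $Z$ of $S$, i.e.\ a subspace invariant under every $R\in\mathcal{L}(Y)$ that commutes with $S$.

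Finally I would transfer the hyperinvariance from $S$ to $T$. If $R\in\mathcal{L}(Y)$ commutes with $T$, then $R$ commutes with $T-\la I_Y=S$, so $R(Z)\subset Z$. Hence $Z$ is hyperinvariant for $T$, completing the proof. There is no real obstacle here: the whole content of the statement has been absorbed by Theorem \ref{prop5.6} and Corollary \ref{cor5.10}, and the only thing to verify is the easy observation that the commutant of $T$ coincides with the commutant of $S$, which makes any $S$-hyperinvariant subspace automatically $T$-hyperinvariant.
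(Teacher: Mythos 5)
Your proposal is correct and follows exactly the paper's own argument: apply Theorem \ref{prop5.6} to write $T = \la I + S$ with $S$ strictly singular and nonzero, invoke Corollary \ref{cor5.10} for $S$, and observe that the commutants of $T$ and $S$ coincide so the hyperinvariant subspace transfers. The extra remarks you include (strict singularity viewed into $\X$ versus into $Y$, and the commutant identification) are exactly the routine verifications the paper leaves implicit.
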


\begin{proof}
Theorem \ref{prop5.6} yields that there exist $\la\in\mathbb{R}$,
such that $S = T - \la I$ is strictly singular, and since $T$ is
not a scalar operator, we evidently have that $S$ is not zero.

By Corollary \ref{cor5.10}, it follows that $S$ admits a
non-trivial closed hyperinvariant subspace $Z$. It is
straightforward to check that $Z$ also is a hyperinvariant
subspace for $T$.
\end{proof}

In the final result, which is related to Proposition 3.1 from
\cite{ADT}, we show that the ``scalar plus compact'' property
fails in every subspace of $\X$.

\begin{prp} Let $Y$ be an infinite dimensional closed subspace of
$\X$. Then there exists a strictly singular, non compact operator
$S:Y\rightarrow Y$. In fact, if $\mathcal{S}(Y)$ is the space of
strictly singular operators on $Y$, then $\mathcal{S}(Y)$ is
non-separable.\label{prop5.12}
\end{prp}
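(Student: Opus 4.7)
The plan is to exhibit, for each $L$ in an almost-disjoint family $\{L_\gamma\}_{\gamma<2^{\aleph_0}}$ of infinite subsets of $\mathbb{N}$, a bounded strictly singular operator $S_L:Y\to Y$ with $\|S_{L_\gamma}-S_{L_\delta}\|\geq c>0$ whenever $\gamma\neq\delta$. Since the compact operators on the separable space $Y$ form a norm-separable ideal, the resulting $2^{\aleph_0}$ pairwise $c$-separated operators cannot all be compact, so the family contains strictly singular non-compact operators, while the ideal $\mathcal{S}(Y)$ is automatically non-separable---yielding both conclusions of the proposition simultaneously.

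For the construction of the data, using Corollaries \ref{cor4.10} and \ref{corexactpairexist} iteratively, we build inside $Y$ two disjointly supported, interleaved block sequences of exact pairs $\{(u_k,f_k)\}_k$ (of types $(n_k,1)$) and $\{(v_k,g_k)\}_k$ (of types $(m_k,1)$), choosing all weights $n_k,m_k$ from $L_1$. Because the coding function $\sigma$ takes values in $L_2$, this automatically renders the $f_j$'s pairwise $\sigma$-incompatible: no type II functional in $W$ can have two distinct $f_j$'s as its I$_\alpha$ components. Refining the supports during the inductive construction---requiring $\min\supp u_{k+1}>\max\supp f_k$, the analogous condition between the two families, and $\ran f_k\cap\ran v_j=\varnothing$ for all $j,k$---we arrange $f_j(u_k)=\delta_{jk}$ and $f_j(v_i)=0$.

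For each infinite $L\subseteq\mathbb{N}$ define
\[
S_L(y)=\sum_{k\in L}f_k(y)\,v_k,\qquad y\in Y.
\]
The core technical step is uniform boundedness: $\|S_L\|\leq C$ for a constant $C$ independent of $L$. For $\phi\in W$ and $y\in Y$ with $\|y\|\leq 1$ one estimates $|\phi(S_L(y))|$ by tree-analysis of $\phi$, in the spirit of the basic inequality (Proposition \ref{basic}) combined with Proposition \ref{cor3.9} and Lemma \ref{lem3.15}. The $\sigma$-incompatibility of the $f_k$'s ensures that no type II node of the tree can match more than one $f_k$ as its first I$_\alpha$ component; the remaining pairings $f_k(y)\phi(v_k)$ are controlled by the fact that $v_k$ is a $(4,1,m_k)$-exact vector, so any subfunctional of $\phi$ of a different weight evaluates it with the geometric decay given by those estimates. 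Summing yields the uniform bound.

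Since $S_L(u_k)=v_k$ for $k\in L$ with $\|v_k\|\geq 1$ (Remark \ref{remestexactpair}) and $\{u_k\}$ is bounded and weakly null, $S_L$ is non-compact. Strict singularity uses Proposition \ref{prop5.7}: for any normalized $c_0$-spreading-model sequence $\{w_n\}$ in $Y$---which exists by Corollary \ref{cor4.10} and may be taken block after perturbation---for each fixed $k$ one has $f_k(w_n)=0$ once $\min\supp w_n>\max\supp f_k$, while the remaining contributions $f_j(w_n)v_j$ with $\ran f_j\cap\ran w_n\neq\varnothing$ are bounded by the $c_0$-spreading-model estimate of Proposition \ref{cor4.7}, giving $\|S_L w_n\|\to 0$. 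For norm separation, when $k\in L\triangle L'$ we have $(S_L-S_{L'})(u_k)=\pm v_k$, so $\|S_L-S_{L'}\|\geq\|v_k\|/\|u_k\|\geq 1/29$. Taking any almost-disjoint family of infinite subsets of $\mathbb{N}$ of size $2^{\aleph_0}$ completes the argument. The main obstacle is the boundedness verification for $S_L$: it requires a delicate tree-analysis combining the HI-type estimate of Proposition \ref{prop5.2} with the single-functional bounds on exact vectors from Section 3, and must uniformly dominate the interaction between $\phi$'s tree structure and the family $\{f_k\}_{k\in L}$ regardless of which infinite $L$ is selected.
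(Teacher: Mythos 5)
Your global strategy (a continuum of pairwise $\e$-separated strictly singular operators, non-compactness witnessed on a weakly null sequence) is the same as the paper's, but the operator you construct is different and the step that carries all the weight is missing. The paper takes $\{x_k\}$ in $Y$ generating a $c_0$ spreading model and, crucially, a sequence $\{x_k^*\}$ in $Y^*$ that \emph{also} generates a $c_0$ spreading model --- supplied by Corollary \ref{cor4.12}, where the dual sequence is manufactured from very fast growing $\al$-averages precisely so that $\frac{1}{2^n}\sum_{q\in F}\al_q$ is again a functional of $W$ and upper $c_0^n$ estimates hold. It then sets $Sx=\sum_k x^*_{q_k}(x)x_k$ with the lacunary choice $q_j\geqslant M_{1/2^{j+1}}$, and proves boundedness by splitting $\{k:|x^*(x_k)|\sim 2^{-j}\}$ into Schreier sets whose cardinality is bounded via the $c_0$ spreading model of $\{x_k\}$ and applying the dual $c_0$ estimate on each dyadic level; strict singularity then follows from a Fredholm argument using $S[Y]\subset[x_k]_k$ and $\dim(Y/[x_k]_k)=\infty$, not from Proposition \ref{prop5.7}.

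The genuine gap in your version is the boundedness of $S_Ly=\sum_{k\in L}f_k(y)v_k$, which you defer, and the mechanism you sketch does not meet the actual difficulty. The $\sigma$-incompatibility of the $f_k$ and the exact-vector estimates of Section 3 control the sizes $|\phi(v_k)|$ against a single norming functional $\phi$; what is needed is control of $\sum_{k\in L}|f_k(y)|\,|\phi(v_k)|$, and that requires the \emph{coefficient} functionals $\{f_k\}$ to satisfy an upper $c_0$-type (Schreier-summability) estimate in $Y^*$. Nothing guarantees this for your $f_k$: they are type I$_\al$ functionals of weights $n_k\in L_1$, a disjointly supported family of them assembles into nothing better than an $\al$-average $\frac{1}{\#G}\sum_{k\in G}f_k$, giving only the trivial bound $\|\sum_{k\in G}f_k\|\leqslant\#G$, and the paper nowhere establishes a $c_0$ spreading model in the dual for such a sequence --- this is exactly the role of Corollary \ref{cor4.12} in the paper's proof. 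Without it, $\sum_{k}|f_k(y)|$ over the indices whose ranges meet $\ran y$ can grow with their number and the tree analysis has nothing to absorb it with. Secondary but real: your strict-singularity argument quantifies Proposition \ref{prop5.7} the wrong way round (you must show $S_Lw_n\to 0$ for \emph{every} $c_0$-spreading-model sequence in $Y$, and the decay of the many terms $f_j(w_n)$ with $\ran f_j\cap\ran w_n\neq\varnothing$ needs the RIS parameters of $\{w_n\}$ to dominate the fixed weights $n_j$, which is not automatic). The boundedness of $S_L$, however, is the point on which the proposal stands or falls, and as written it is not established.
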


\begin{proof}
By Corollary \ref{cor4.10}, there exists a sequence
$\{x_k\}_{k\inn}$ in $Y$ that generates a spreading model
equivalent to $c_0$, say with an upper constant $c_1$ and by
Corollary \ref{cor4.12}, there exists a sequence
$\{x_k^*\}_{k\inn}$ in $Y^*$ that also generates a spreading model
equivalent to $c_0$, say with an upper constant $c_2$. Therefore
$\{x_k\}_{k\inn}$ and $\{x_k^*\}_{k\inn}$ are weakly null and we
may assume that they are Schauder basic and that $\dim(Y/[x_k]_k)
= \infty$. We may also assume that there exist $\{z_k\}_{k\inn}$
in $Y$ such that $\{x_k^*\}_{k\inn}$ is almost biorthogonal to
$\{z_k\}_{k\inn}$.

For $\e>0$, set $M_\e = \frac{4c_1}{\e}$. Choose a strictly
increasing sequence of naturals $\{q_j\}_{j\inn}$, such that
$q_j\geqslant M_{1/2^{j+1}}$. Set $S:Y\rightarrow Y$, such that
$Sx = \sum_{k=1}^\infty x_{q_k}^*(x)x_k$. Then:

\begin{enumerate}

\item[(i)] $S$ is bounded and non compact.

\item[(ii)] $S$ is strictly singular.

\end{enumerate}

We first prove that it is bounded. Let $x\in Y, \|x\| = 1$,
$x^*\in Y^*, \|x^*\| = 1$. For $j\geqslant 0$, set $B_j = \{k\inn:
1/2^{j+1}<|x^*(x_k)|\leqslant 1/2^j\}$. Since $\{x_k\}_{k\inn}$
generates $c_0$ as a spreading model, it follows that
$B_j\leqslant M_{1/2^{j+1}}\leqslant q_j$. Set $C_j = \{k\in B_j:
k\geqslant j\}, D_j = B_j\setminus C_j$. Evidently $\#D_j\leqslant
j$ and it is easy to see that $\#\{q_k: k\in
C_j\}\leqslant\min\{q_k: k\in C_j\}$, therefore, since
$\{x_k^*\}_{k\inn}$ generates a spreading model equivalent to
$c_0$, it follows that

\begin{equation*}
|\sum_{k\in C_j}x^*(x_k)x^*_{q_k}(x)| < c_2\max\{|x^*(x_k)|: k\in
C_j\}
\end{equation*}

Therefore $|\sum_{k\in B_j}x^*_{q_k}(x)x^*(x_k)|\leqslant
c_2\max\{|x^*(x_k)|: k\in C_j\} + j/2^j\leqslant c_2/2^j + j/2^j$.
From this it follows that
\begin{equation*}
\|Sx\|\leqslant \sum_{j=0}^\infty \frac{j + c_2}{2^j}\|x\|
\end{equation*}

The fact that $S$ is non compact follows easily if you consider
the almost biorthogonals $\{z_k\}_{k\inn}$ of
$\{x^*_{q_k}\}_{k\inn}$. Then $\{z_k\}_{k\inn}$ is a
seminormalized sequence in $Y$ and $\{Sz_k\}_{k\inn}$ does not
have a norm convergent subsequence.

We now prove that $S$ is strictly singular. Suppose that it is
not, then there exists $\la\neq 0$ such that $T = S - \la I$ is
strictly singular. Since $\la I$ is a Fredholm operator and $T$ is
strictly singular, it follows that $S = T + \la I$ is also a
Fredholm operator, therefore $\dim(Y/S[Y]) < \infty$. The fact
that $S[Y] \subset [x_k]_k$ and $\dim(Y/[x_k]_k) = \infty$ yields
a contradiction.

Moreover, for any further subsequence $\{x_k^*\}_{k\in L}$ of
$\{x_{q_k}^*\}_{k\inn}$, if we set $S_Lx = \sum_{k\in L}
x_{k}^*(x)x_k$, then $S_L$ satisfies the same conditions. This
yields that $\mathcal{S}(Y)$ contains an uncountable
$\e$-separated set and is therefore non-separable.
\end{proof}

The last proof actually yields that if $Y$ is an infinite
dimensional closed subspace of $\X$, then the space of strictly
singular, non-compact operators of $Y$ is non-separable.

\subsection*{Some final remarks}

We would like to mention that the structure of the dual of $\X$ is
unclear to us. In particular we cannot determine whether $\X^*$
shares similar properties with $\X$. For example, we do not know whether $\X^*$ admits only $c_0$ and $\ell_1$ as a spreading model. However, the following holds.

\begin{prp}
Let $X$ be a reflexive Banach space. Then the following are
equivalent.
\begin{itemize}

\item[(i)] The space $X$ satisfies the hereditary ISP.

\item[(ii)] Every infinite dimensional quotient of $X^*$ satisfies
ISP
\end{itemize}
If even more, every non scalar operator defined on an infinite
dimensional closed subspace of $X$ admits a non trivial closed
hyperinvariant subspace, then every non scalar operator defined on
an infinite dimensional quotient of $X^*$ admits a non trivial
closed hyperinvariant subspace.\label{dualityisp}
\end{prp}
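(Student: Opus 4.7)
The plan is to exploit the standard duality between closed subspaces of a reflexive Banach space and the quotients of its dual, together with the canonical bijection between bounded operators on a reflexive space and bounded operators on its dual given by taking adjoints. Specifically, for $X$ reflexive and $Y$ a closed subspace of $X$, the space $Y^{\perp}\subseteq X^{*}$ is closed and the natural map $X^{*}/Y^{\perp}\to Y^{*}$ is an isometric isomorphism. Conversely, every infinite dimensional quotient $Z$ of $X^{*}$ has the form $X^{*}/M$ with $M$ closed, and then $Y:=M^{\perp}\subseteq X^{**}=X$ is a closed infinite dimensional subspace with $Y^{*}=Z$. So the assignment $Y\mapsto Y^{*}$ establishes a bijection between infinite dimensional closed subspaces of $X$ and infinite dimensional quotients of $X^{*}$.

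The second ingredient is the following correspondence at the operator level. Fix such a pair $(Y,Z=Y^{*})$. Since $Y$ is reflexive, the adjoint map $S\mapsto S^{*}$ is an isometric algebra anti-isomorphism from $\mathcal{L}(Y)$ onto $\mathcal{L}(Z)$; in particular every $T\in\mathcal{L}(Z)$ equals $S^{*}$ for a unique $S\in\mathcal{L}(Y)$, and $S=\lambda I_{Y}$ if and only if $T=\lambda I_{Z}$. Moreover, a closed subspace $V\subseteq Y$ satisfies $S(V)\subseteq V$ if and only if $V^{\perp}\subseteq Z$ satisfies $T(V^{\perp})\subseteq V^{\perp}$: indeed, for $z\in V^{\perp}$ and $v\in V$, $(Tz)(v)=z(Sv)$, so $T(V^{\perp})\subseteq V^{\perp}\Leftrightarrow z(Sv)=0$ for all $z\in V^{\perp},v\in V\Leftrightarrow S(V)\subseteq\overline{V}=V$. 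By reflexivity, $V^{\perp\perp}=V$, so this correspondence is a bijection between closed invariant subspaces of $S$ and closed invariant subspaces of $T$, and it carries non trivial subspaces to non trivial subspaces (since $V\neq\{0\}\Leftrightarrow V^{\perp}\neq Z$ and $V\neq Y\Leftrightarrow V^{\perp}\neq\{0\}$).

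Combining these two ingredients yields both implications of the equivalence. Assume (i) and let $T\in\mathcal{L}(Z)$ with $Z$ an infinite dimensional quotient of $X^{*}$; write $Z=Y^{*}$ and $T=S^{*}$ as above. By the hereditary ISP, $S\in\mathcal{L}(Y)$ has a non trivial closed invariant subspace $V$, and then $V^{\perp}$ is a non trivial closed invariant subspace for $T$, proving (ii). Conversely, assuming (ii) and given $S\in\mathcal{L}(Y)$ with $Y$ a closed infinite dimensional subspace of $X$, the quotient $Z=X^{*}/Y^{\perp}=Y^{*}$ is infinite dimensional and $S^{*}\in\mathcal{L}(Z)$, so by hypothesis $S^{*}$ has a non trivial closed invariant subspace $W$, and $W^{\perp}\subseteq Y^{**}=Y$ is a non trivial closed invariant subspace for $S^{**}=S$, proving (i).

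For the hyperinvariant statement, note that the same adjoint bijection $\mathcal{L}(Y)\cong\mathcal{L}(Z)$ satisfies $RS=SR$ if and only if $S^{*}R^{*}=R^{*}S^{*}$, so it identifies the commutant of $S$ with the commutant of $T=S^{*}$. Hence if $V\subseteq Y$ is hyperinvariant for a non scalar $S\in\mathcal{L}(Y)$, then for every $T'\in\mathcal{L}(Z)$ commuting with $T$, writing $T'=R^{*}$, the operator $R$ commutes with $S$, so $R(V)\subseteq V$, which by the invariant subspace correspondence above gives $T'(V^{\perp})\subseteq V^{\perp}$; thus $V^{\perp}$ is hyperinvariant for $T$. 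Applied to any non scalar $T\in\mathcal{L}(Z)$, the corresponding $S=T_{*}$ is non scalar, the hypothesis yields a non trivial closed hyperinvariant $V$ for $S$, and $V^{\perp}$ is a non trivial closed hyperinvariant subspace for $T$. The proof is essentially bookkeeping; the only mild pitfall to watch is that all identifications $Y^{**}=Y$, $S^{**}=S$, and $V^{\perp\perp}=V$ genuinely require reflexivity of $X$, so the argument collapses without it, and one must verify the scalar/non scalar distinction is preserved, which is immediate from $S^{*}=\lambda I_{Z}\Leftrightarrow S=\lambda I_{Y}$.
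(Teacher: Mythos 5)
Your proposal is correct and follows essentially the same route as the paper: identify infinite dimensional quotients of $X^*$ with duals of closed subspaces of $X$ via annihilators, use reflexivity to realize every operator on such a quotient as an adjoint, and transport (hyper)invariant subspaces through the correspondence $V\mapsto V^\perp$. The only cosmetic difference is that you package the subspace/quotient and operator correspondences as explicit bijections before applying them, whereas the paper first records the general one-directional annihilator fact and then upgrades it to an equivalence using reflexivity; the mathematical content is the same.
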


\begin{proof}
In the general setting, if $X$ is a Banach space, $T$ is a bounded linear operator on $X$ admitting a non trivial closed invariant subspace $Y$, then it is straightforward to check that $Y^\perp$ is a non trivial closed invariant subspace of $T^*$. If moreover $Y$ is $T$-hyperinvariant, then $Y^\perp$ is $S^*$-invariant, for every operator $S$ on $X$, which commutes with $T$.

In the setting of reflexive spaces, all operators on $X^*$ are dual operators, hence we conclude the following. Let $X$ be a reflexive Banach space and $T$ be a bounded linear operator on $X$.

\begin{itemize}

\item[(a)] Then $T$ admits a non trivial closed invariant subspace if and only if $T^*$ admits a non trivial closed invariant subspace.

\item[(b)] Moreover, $T$ admits a non trivial closed hyperinvariant subspace if and only if $T^*$ admits a non trivial closed hyperinvariant subspace.

\end{itemize}

We now proceed to prove the equivalence of assertions (i) and (ii).

Assume that (i) holds, let $X^*/Y$ be an infinite dimensional quotient of $X^*$ and $T$ be a bounded linear operator on $X^*/Y$. Then $X^*/Y = Y_\perp^*$ and there is $S$ a bounded linear operator on $Y_\perp$ with $T = S^*$. Since $S$ admits a non trivial closed invariant subspace, by (a) so does $T$.

If we moreover assume that $S$ admits a non trivial closed hyperinvariant subspace, then by (b), so does $T$.

Conversely, if (ii) holds, assume that $Y$ is an infinite dimensional subspace of $X$ and $T$ is a bounded linear operator on $Y$. Then $Y^* = X^*/Y^\perp$. By the assumption, $T^*$ admits a non trivial invariant subspace and therefore, by (a) so does $T$.

\end{proof}

\begin{cor}
Every infinite dimensional quotient of $\X^*$ satisfies ISP. More
precisely, every non scalar operator defined on an infinite
dimensional quotient of $\X^*$ admits a non trivial closed
hyperinvariant subspace.
\end{cor}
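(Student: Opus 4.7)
The plan is simply to invoke Proposition \ref{dualityisp} with $X = \X$. To do so, I need to verify the two hypotheses of that proposition: that $\X$ is reflexive, and that $\X$ satisfies the hereditary ISP with the stronger conclusion that every non-scalar operator on an infinite dimensional closed subspace of $\X$ admits a non-trivial closed hyperinvariant subspace.

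Reflexivity of $\X$ is exactly Corollary \ref{cor2.24}. For the second hypothesis, let $Y$ be an infinite dimensional closed subspace of $\X$ and let $T \in \mathcal{L}(Y)$. If $T = \la I_Y$ for some $\la\in\mathbb{R}$, then every closed subspace of $Y$ is invariant (and hyperinvariant) under $T$, so $T$ trivially admits a non-trivial closed invariant subspace. If $T$ is not a scalar operator, Corollary \ref{cor5.11} provides a non-trivial closed hyperinvariant subspace for $T$. In particular, every operator on $Y$ admits a non-trivial closed invariant subspace, so $Y$ satisfies ISP, and $\X$ satisfies the hereditary ISP in the strong hyperinvariant form.

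With both hypotheses verified, Proposition \ref{dualityisp} applies and yields that every infinite dimensional quotient $\X^*/Z$ of $\X^*$ satisfies ISP, and moreover that every non-scalar operator on such a quotient admits a non-trivial closed hyperinvariant subspace. This is exactly the content of the corollary, so the proof is complete.

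There is no real obstacle here: all the substantive work has already been done in Corollaries \ref{cor2.24}, \ref{cor5.10} and \ref{cor5.11} for the space $\X$ itself, and in Proposition \ref{dualityisp} for the general reflexive duality transfer. The only thing one should be careful about is to separate the scalar and non-scalar cases when checking the hereditary ISP hypothesis, since Corollary \ref{cor5.11} is stated only for non-scalar operators.
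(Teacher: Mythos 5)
Your proposal is correct and is exactly the intended argument: the paper states this as an immediate consequence of Proposition \ref{dualityisp} applied to $X=\X$, with reflexivity from Corollary \ref{cor2.24} and the hereditary ISP (in the hyperinvariant form) from Corollary \ref{cor5.11}. Your explicit handling of the scalar case is a reasonable bit of added care, but otherwise the route is the same.
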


We would also like to mention, that the method of constructing hereditarily indecomposable Banach spaces with saturation under constraints, using Tsirelson space as an unconditional frame, can be used to yield further results. For example, in \cite{AM} a reflexive hereditarily indecomposable Banach space $\mathfrak{X}_{_{^\text{usm}}}$ is constructed having the following property. In every
subspace $Y$ of $\mathfrak{X}_{_{^\text{usm}}}$ there exists a weakly null normalized
sequence $\{y_n\}_n$, such that every subsymmetric sequence
$\{z_n\}_n$ is isomorphically generated as a spreading model of a
subsequence of $\{y_n\}_n$.

\end{document}